\def\blfootnote{\xdef\@thefnmark{}\@footnotetext}
\renewcommand\footnotemark{}
\newcommand{\N}{\mathbb N}
\newcommand{\CP}{\mathbb{CP}}
\newcommand{\Z}{\mathbb Z} 
\newcommand{\F}{\mathcal F}
\newcommand{\TT}{\mathbb T}
\newcommand{\fS}{\mathfrak{S}}
\newcommand{\de}{\delta}
\newcommand{\ep}{\varepsilon}
\newcommand{\si}{\sigma}
\newcommand{\ga}{\gamma}
\newcommand{\Si}{\Sigma}
\newcommand{\la}{\lambda}
\newcommand{\x}{\times}
\newcommand{\hra}{\hookrightarrow}
\newcommand{\del}{\partial}
\renewcommand{\H}{\mathcal H}
\renewcommand{\int}{\rm int}
\newcommand{\co}{\thinspace\colon}
\DeclareMathOperator{\sgn}{sgn}
\DeclareMathOperator{\fr}{fr}
\DeclareMathOperator{\Map}{Map}
\newcommand{\tr}{tr}
\newcommand{\lk}{lk}
\newcommand{\bde}{\bm{\de}}
\newcommand{\bx}{\bm{x}}
\newtheorem{thm}{Theorem}[section]
\newtheorem{lemma}[thm]{Lemma}
\newtheorem{cor}[thm]{Corollary}
\newtheorem{prop}[thm]{Proposition}
\theoremstyle{definition}
\newtheorem{defn}[thm]{Definition}
\newtheorem{rmk}[thm]{Remark}
\newtheorem*{ack}{Acknowledgments}
\numberwithin{equation}{section}
\newcommand{\Addresses}{{% additional braces for segregating \footnotesize
		\bigskip
		\footnotesize
		
		\textsc{Dipartimento di Matematica, Largo Bruno Pontecorvo 5, 56127 Pisa, Italy}\par\nopagebreak
		\textit{E-mail addresses:}\ \texttt{paolo.lisca@unipi.it, andrea.parma94@gmail.com}
}}
\begin{document}
\title{Horizontal decompositions, II}
\author{Paolo Lisca \and Andrea Parma}
\date{\today}
\maketitle	

\begin{abstract}
We complete the classification of the smooth, closed, oriented $4$-manifolds having Euler characteristic less than four and a horizontal handlebody decomposition of genus one. 
We use the classification result to find a large family of rational homology ball smoothings of cyclic quotient singularities which can be smoothly embedded into the complex projective plane. 
Our family contains all such rational balls previously known to embed into $\CP^2$ and infinitely many more.
We also show that a rational ball of our family admits an almost-complex embedding in $\CP^2$ if and only if it admits a symplectic embedding.
\end{abstract}

\blfootnote{2020 {\it Mathematics Subject Classification.}\, 57R40 (Primary), 57K43, 57R17 (Secondary).} 
\blfootnote{Keywords: rational homology balls, smooth embeddings, handlebody decompositions.}

\section{Introduction}\label{s:intro} 

Let $B_{p,q}$, where $p>q\geq 0$ are coprime integers, be the rational homology ball smoothing of the cyclic quotient singularity $\frac1{p^2}(pq-1,1)$. 
The rational balls $B_{p,q}$ were used in the smooth rational blow-down construction~\cite{FS97} and in its symplectic counterpart~\cite{Sy98}. 
They played an important role in the construction of exotic $4$-manifolds, starting with the papers~\cite{Pa05, FS06}. 
The problem of embedding the $B_{p,q}$'s, smoothly or symplectically, in a $4$-manifold was considered in~\cite{Kh12, Kh13, Kh14}. 
As pointed out in~\cite[Section~2]{ES18}, the existence of holomorphic embeddings $B_{p,q}\hra\CP^2$ for certain $(p,q)$'s follows from results in algebraic geometry~\cite{HP10}, and explicit symplectic embeddings can be constructed. 
Moreover, Evans and Smith~\cite{ES18} showed that no further symplectic embeddings exist beyond those which are already known.  
Owens~\cite{Ow20} showed that infinitely many $B_{p,q}$'s admit smooth embeddings in 
$\CP^2$ but by~\cite{ES18} they cannot be symplectically embedded. 
The authors extended Owens' family~\cite{LP22-1} and proved the non-existence of almost 
complex embeddings~\cite{LP20} without relying on~\cite{ES18}. The present paper is a natural 
continuation of~\cite{LP22-2}, where we introduced certain handlebody decompositions that 
we call horizontal, classified the closed $4$-manifolds with the simplest horizontal 
decompositions and in doing so we recovered infinitely many of the known smooth embeddings $B_{p,q}\hra\CP^2$.

This paper consists of two parts. In the first part we complete the classification of the 
smooth, closed, oriented $4$-manifolds $X$ with Euler characteristic $\chi(X)\leq 3$ and a 
horizontal decomposition of genus $1$. In the second part we exploit the classification 
result to determine the rational balls $B_{p,q}$ which admit smooth embeddings 
in $\CP^2$ induced by a genus-$1$ horizontal decomposition. It turns out that in this 
way we are able to find the largest family so far of $B_{p,q}$'s smoothly 
embedded into the complex projective plane. The family contains all such rational balls 
previously known to embed into $\CP^2$ and many more. We show that each rational ball of the family 
admits a symplectic embedding in $\CP^2$ if and only if it admits an almost-complex 
embedding. 

In order to state our results we need to recall the definition of a horizontal decomposition~\cite{LP22-2}. 
Let $\H$ be a handlebody decomposition of a smooth, oriented 4-dimensional cobordism $X\co\del_- X\to\del_+ X$. 
Suppose that $\H$ has $h_i$ handles of index $i$ and let $\del_\pm^{h_i} X = \del_\pm X\#^{h_i} S^1\x S^2$ for $i\in\{1,3\}$. 
Let $X_L\co\del_-^{h_1} X\to \del_+^{h_3} X$ be the cobordism obtained by attaching the $2$-handles along a framed link $L = \bigcup_{i=1}^{h_2} \ga_i\subset \del_-^{h_1} X$. 
Acccording to~\cite{LP22-2}, the link $L$ and the decomposition $\H$ are called {\em horizontal} 
if, for some Heegaard decomposition $H_g\cup\Si_g\x [0,1]\cup H'_g$ of $\del_-^{h_1} X$ 
we have: 
\begin{itemize}
\item 
$\ga_i\subset\fS_i:=\Si_g\x\{t_i\}$ for some $0<t_1<\cdots <t_{h_2}<1$; 
\item 
the $i$-th $2$-handle of $\H$ is attached 
to $\ga_i$ with framing $\fr(\ga_i) = \fr_{\fS_i}(\ga_i)\pm 1$.
\end{itemize}
Note that the $3$-manifold 
$\del_+^{h_3} X$ is obtained from $\del_-^{h_1} X$ by $3$-dimensional 
surgery on each $\ga_i$ with the same framing $\fr(\ga_i)$. We say 
that $\H$ is a horizontal decomposition of type $(g,h_1,h_2,h_3)$, or a 
$(g,h_1,h_2,h_3)$-decomposition. As observed in~\cite[Section~1]{LP22-2}, 
the inequality $g\geq\max\{h_1,h_3\}$ always holds. We say that a smooth, 
closed $4$-manifold $\hat X$ has a horizontal decomposition 
if $X:=\hat X\setminus\{B^4\cup B^4\}$ has a such a decomposition 
when viewed as a cobordism $S^3\to S^3$.

Together with~\cite[Theorem~1.7]{LP22-2}, the following result yields a complete 
classification of the smooth, closed, oriented $4$-manifolds $\hat X$ having Euler characteristic 
$\chi(\hat X)\leq 3$ and a horizontal decomposition of genus $1$.

\begin{thm}\label{t:type11} 
Let $\hat X$ be a smooth, closed $4$-manifold admitting a $(1,h_1,h_2,1)$-decomposition. 
Then, $\hat X$ is diffeomorphic, possibly after reversing orientation, to: 
\[
S^4,\quad S^1\x S^3,\quad \pm\CP^2\#S^1\x S^3,\quad \pm\CP^2\#\pm\CP^2\#S^1\x S^3
\quad\text{or}\quad B_{p,q}\cup -B_{p,q} \quad\text{if $\chi(\hat X)\leq 2$,}
\]
\[
\CP^2,\quad \CP^2\# B_{p,q}\cup -B_{p,q},\quad \CP^2\# 2\overline{\CP}^2\# S^1\x S^3, 
\quad\text{or}\quad 3\CP^2\#  S^1\x S^3\quad\text{if $\chi(\hat X)=3$,}
\]
for some $p>q\geq 0$.
\end{thm}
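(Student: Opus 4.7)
The strategy is to analyze the middle cobordism $X_L$ by extracting, from the sequence of horizontal attaching curves, a word in the mapping class group of the Heegaard torus. Writing
\[
\hat X \;=\; B^4 \cup (h_1 \text{ 1-handles}) \cup X_L \cup (h_3 \text{ 3-handles}) \cup B^4,
\]
we have $\chi(\hat X) = 2 + h_2 - h_1 - h_3$; together with the bound $h_1, h_3 \leq g = 1$, the hypothesis $\chi(\hat X) \leq 3$ forces $h_2 \leq h_1 + h_3 + 1 \leq 3$, so only finitely many configurations of horizontal data need to be examined. Each 2-handle is specified by a curve $\ga_i \subset \fS_i$ together with a sign $\ep_i \in \{+1,-1\}$ corresponding to the framing $\fr_{\fS_i}(\ga_i) + \ep_i$.

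First I would separate the $\ga_i$ into nullhomotopic and primitive ones. A nullhomotopic $\ga_i$ bounds a disk on $\fS_i$ and is therefore an unknot in a collar of $\fS_i$ whose surface framing agrees with the Seifert framing; the attached 2-handle, having framing $\pm 1$, contributes a $\pm \CP^2$ connected summand to $\hat X$ and can be peeled off the decomposition. After this reduction every remaining $\ga_i$ is essential, represented by a primitive class $v_i \in H_1(\fS_i) \cong \Z^2$, and the horizontal framing condition is precisely the one that identifies the 2-handle at $\ga_i$ with the effect of Dehn-twisting the genus-$1$ Heegaard gluing by $\tau_{v_i}^{-\ep_i}$, as in~\cite{LP22-2}. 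The cobordism $X_L$ is thus encoded by the word $\tau_{v_{h_2}}^{-\ep_{h_2}} \cdots \tau_{v_1}^{-\ep_1} \in \mathrm{MCG}(T^2) \cong SL(2,\Z)$ acting on the Heegaard presentation of $\del_-^{h_1} X$, and the constraint that $\del_+^{h_3} X$ also be standard (i.e.\ $S^3$ or $S^1 \x S^2$) pins down the word up to a prescribed coset. Simultaneously, the 1- and 3-handles either cancel in pairs, giving no contribution, or are non-cancelling, producing an $S^1 \x S^3$ connected summand.

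The remainder is a case analysis by the reduced $h_2 \in \{0, 1, 2, 3\}$. When $h_2 = 0$ the word is trivial and $\hat X$ is $S^4$, $\CP^2$, or $S^1 \x S^3$ according to the cancellation pattern of the 1- and 3-handles. For $h_2 \geq 1$ I would split the reduced word at a ``central'' torus into two halves and argue, using the Hirzebruch--Jung continued fraction expansion of $p^2/(pq-1)$ and the known handle description of $B_{p,q}$, that each half assembles the handle decomposition of a rational ball $\pm B_{p,q}$, with matching $(p, q)$ forced by the common lens-space boundary at the central torus; this identifies $X_L$, up to $\pm \CP^2$ and $S^1 \x S^3$ summands, as $B_{p,q} \cup -B_{p,q}$. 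The main obstacle is precisely this last identification: one must match the sequence $(v_i, \ep_i)$ with the continued fraction, handle runs of equal consecutive slopes that can simplify the word or generate additional $\pm \CP^2$ summands beyond those from nullhomotopic curves, and rule out any closed 4-manifold outside the listed families.
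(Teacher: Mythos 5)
Your setup---bounding $h_2\le 3$ via the Euler characteristic, encoding the $2$-handles as a word $\tau_{v_{h_2}}^{-\ep_{h_2}}\cdots\tau_{v_1}^{-\ep_1}$ in $\mathrm{MCG}(T^2)$, and using the condition that $\del_+^{h_3}X$ be standard to constrain the word---is exactly the paper's framework. But the core of the proof is missing, and the mechanism you propose for the hard case would fail. First, the dichotomy ``the $1$- and $3$-handles either cancel in pairs or produce an $S^1\x S^3$ summand'' is false: in the outcome $B_{p,q}\cup -B_{p,q}$ the $1$-handle belongs to one rational ball and the $3$-handle to the other, and the result is neither simply connected nor a connected sum with $S^1\x S^3$. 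Second, and more seriously, the plan to ``split the reduced word at a central torus into two halves'' each assembling a $\pm B_{p,q}$ only works when the essential attaching curves are mutually disjoint (the $n=0$ subcase of $(h_1,h_2)=(1,2)$ in the paper). For $(h_1,h_2)=(1,3)$ the three curves pairwise intersect and the answer is, e.g., $\CP^2$ arising from the factorization $\tau_{\mu+2\la}\tau_{2\mu+\la}\tau_{\mu-\la}=\tau_\la^{-9}$, or a disjoint union of \emph{three} distinct rational balls $B_{p_1,q_1}\cup B_{p_2,q_2}\cup B_{p_3,q_3}$ with $(p_1,p_2,p_3)$ a Markov triple; no two-halves splitting along a central torus produces these.

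What is missing is any finiteness or descent mechanism for the length-$3$ factorizations: there are infinitely many triples of curves on $T^2$ satisfying $m_L(\la)=\pm\la$, and ``handling runs of equal consecutive slopes'' does not organize them. The paper's key ingredients, which you would need some substitute for, are: (i) a trace formula for a product of three transvections, converting the factorization data into a Markov-type Diophantine equation $\de_1x_1^2+\de_2x_2^2+\de_3x_3^2-x_1x_2x_3=\de_1\de_2\de_3(2-\tr(m_L))$ in the pairwise intersection numbers $x_i$; (ii) the observation that Hurwitz moves on the factorization (which preserve the cobordism) act on solutions by Vieta-type mutations $x_i\mapsto \hat x_i$, so every factorization can be reduced to one realizing a weakly minimal solution; (iii) a classification of the weakly minimal solutions for each sign pattern and each value of $\tr(m_L)\in\{\pm2\}$, followed by explicit Kirby calculus identifying the closed manifold for each minimal factorization. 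Without (i)--(iii) the case $h_2=3$ remains open, and your proposed continued-fraction matching does not close it. (A minor further point: the curves contributing $\pm\CP^2$ summands in the genus-$1$ setting are those isotopic to the compressing curve $\la$, which are essential on the Heegaard torus, not nullhomotopic on it.)
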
 
From now on we concentrate on $(1,h_1,h_2,1)$-decompositions of a $4$-manifold $\hat X$ assuming $\hat X\cong\pm\CP^2$. 
Then, we have $3 = \chi(\hat X) = 1 - h_1 + h_2$. 
Since $h_1\leq g=1$, the pair $(h_1,h_2)$ is equal to either $(0,2)$ or $(1,3)$.
The proof of~\cite[Lemma~5.1]{LP22-2} shows that a $(1,1,3,1)$-decomposition of a $4$-manifold $X$ induces a smooth embedding in $X$ of the disjoint union of $3$ $4$-manifolds, each consisting of a $1$-handle and a $2$-handle attached to $\del_+(S^1\x D^3\setminus B^4) = S^1\x S^2$ along a simple closed curve sitting on the standard genus-$1$ Heegaard torus. 
Using this fact, in the following Theorem~\ref{t:disj-emb} we describe essentially all the smooth embeddings of $B_{p,q}$'s induced by $(1,1,3,1)$-decompositions of $\CP^2$.

We introduce some notation in order to give a more concise statement of Theorem~\ref{t:disj-emb}. 
When we write $B_{p,q}$ we shall always assume that $p$ and $q$ are coprime integers, but not necessarily satisfying $p>q\geq 0$. 
When $p\neq 0$, by $B_{p,q}$ we shall denote the rational homology ball $B_{|p|,\bar q}$, where $\bar q$ is the unique integer congruent to $q$ modulo $|p|$ and satisfying $|p|>\bar q\geq 0$. 
Note that when $p=\pm 1$ then $q=0$ and $B_{\pm 1,0} = B^4$. When $p=0$ then $q=\pm 1$ and by $B_{p,q}$ we shall denote $S^1\x D^3\#q\CP^2$. 

Let $\{x_n(a,b)\}_{n\geq 0}$ be the integer sequence defined by 
\[
x_0(a,b)=a,\quad x_1(a,b)=b,\quad x_{n+2}(a,b) = x_{n+1}(a,b) + x_n(a,b),\quad n\geq 2,
\]
and denote by $F_n := x_n(0,1)$ and $L_n := x_n(2,1)$ respectively the $n$-th 
Fibonacci and Lucas number. 

\begin{thm}\label{t:disj-emb} 
A disjoint union $\pm B_{p_1,q_1}\cup\pm B_{p_2,q_2}\cup\pm B_{p_3,q_3}$ with 
$\max\{|p_i|\}>1$ is induced by a $(1,1,3,1)$-decomposition of 
$\CP^2$ if and only it coincides with one of the following:
\begin{enumerate}
\item[(1)] 
$B_{p_1,q_1}\cup B_{p_2,q_2}\cup B_{p_3,q_3}$, where $(p_1,p_2,p_3)$ is a Markov triple and 
$p_k q_i\equiv \pm 3 p_j\bmod p_i$,
%\[
%q_i\equiv \pm 3\frac{p_j}{p_k}\bmod p_i,
%\]
where $(i,j,k)$ is a permutation of $(1,2,3)$; 
\item[(2)]
$B_{x_1,q_1}\cup -B_{x_2,q_2}\cup -B_{x_3,q_3}$, where $x_1^2+x_1 x_2 x_3 = x_2^2+x_3^2$, 
\[
x_2 q_1\equiv\pm x_3 \bmod x_1,\quad  
x_3 q_2\equiv\pm x_1\bmod(x_1 x_3 + x_2),\quad\text{and}\quad 
x_2 q_3\equiv\pm x_1\bmod x_3;
\]
\item[(3)]
$B_{F_a,q_1}\cup -B_{F_{b-\ep a},q_2}\cup B_{F_b,q_3}$
where $a$ and $b$ are odd, coprime integers, $\ep\in\{\pm 1\}$, $b \neq \ep a$, 
\[
q_1\equiv\pm L_b/F_{b-\ep a}\bmod F_a,\quad   
q_2\equiv\pm L_b/F_a\bmod F_{b-\ep a}\quad\text{and}\quad 
q_3\equiv\pm L_a/F_{b-\ep a}\bmod F_b.
\]
\end{enumerate}
\end{thm}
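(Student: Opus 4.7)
The plan is to read a $(1,1,3,1)$-decomposition as arithmetic data on the Heegaard torus and then classify the resulting Diophantine problem. First, I fix a basis $(m,\ell)$ of $H_1(\Si)\cong\Z^2$, where $\Si$ is the standard Heegaard torus of $\del_-^1 X=S^1\x S^2$, with $m$ the meridian of the unique $1$-handle of $\H$; each attaching circle $\ga_i\subset\fS_i$ is then represented by a primitive pair $(\al_i,\be_i)$, and by hypothesis its framing differs from the surface framing by $\eta_i\in\{\pm 1\}$. The building block formed by the $1$-handle together with the $i$-th $2$-handle has a standard handle description: $S^1\x D^3$ with a $2$-handle attached along the $(\al_i,\be_i)$-slope curve on the Heegaard torus with framing torus$+\,\eta_i$. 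A direct computation identifies this block, with an appropriate orientation, with $\eta_i B_{p_i,q_i}$ for $(p_i,q_i)$ determined from $(\al_i,\be_i,\eta_i)$ — in particular $p_i=\al_i$ up to sign — recovering the conventions of the theorem in the degenerate cases $\al_i\in\{0,\pm 1\}$.

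Next I translate $\hat X\cong\CP^2$ into a system of Diophantine conditions on $(\al_i,\be_i,\eta_i)$. After the three $2$-handles are attached, the middle level must be $S^1\x S^2$ so that the unique $3$-handle of $\H$ caps off to $S^3$; this is a condition on the Dehn surgery along $L=\bigcup\ga_i$. Combined with $\chi(\hat X)=3$ and the requirement that the intersection form of $\hat X$ have rank one and signature $\pm 1$, the decomposition is controlled by the $3\x 3$ symmetric integer linking matrix $Q$ built from the geometric intersection numbers $\De_{ij}=|\al_i\be_j-\al_j\be_i|$ on the torus and the framing twists $\eta_i$. Concretely, one requires $|\det Q|=1$ together with congruences ensuring that the homology class coming from the $1$-handle is annihilated, which produce the congruences on the $q_i$'s modulo the $p_i$'s. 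This bookkeeping, together with the cyclic ordering of the tori $\fS_1,\fS_2,\fS_3$ under which $\ga_j$ is pushed off $\ga_i$, is the technically heaviest step.

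The heart of the classification is then to solve the resulting system. With $p_i=\eta_i\al_i$, the equation $|\det Q|=1$ reduces, according to the signs $\eta_i$, to one of two Diophantine equations: the Markov equation $p_1^2+p_2^2+p_3^2=3p_1p_2p_3$ when all $\eta_i$ coincide, giving case~(1); and the twisted equation $x_1^2+x_1x_2x_3=x_2^2+x_3^2$ for a mixed sign pattern, giving case~(2). Matching the accompanying congruences between the $q_i$'s to the stated formulas completes these two cases. Case~(3) emerges from the remaining mixed sign pattern: the Diophantine system degenerates and forces the $p_i$'s onto a Fibonacci branch of the recursion, whereupon the Binet-type identity $L_n=F_{n-1}+F_{n+1}$ converts the Markov-type congruences into the stated Lucas-number expressions $q_i\equiv\pm L_b/F_{b-\ep a}\bmod F_a$ and its permutations.

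For the converse, I would exhibit for each family in (1)--(3) an explicit triple of curves on $\Si$ together with framing signs $\eta_i$, and verify by handle calculus that the resulting closed $4$-manifold is $\CP^2$. I expect the main obstacle of the whole proof to be not the solution of the Diophantine system — which is essentially forced once the defining equation is in hand — but the careful translation between the $2$-dimensional combinatorics of three curves on a torus and the $4$-dimensional handle calculus: the Dehn-twist changes of coordinates $(\al_i,\be_i)\to(\al_i,\be_i+k\al_i)$ which preserve $p_i$ alter both $Q$ and the identification of the block with $B_{p_i,q_i}$, and one must check that the three listed families are stable under these changes so that the classification is coordinate-free.
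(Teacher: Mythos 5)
Your overall architecture (split $X$ into three blocks $\eta_i B_{p_i,q_i}$, extract Diophantine conditions, classify, then realize each family by explicit curves) matches the paper's, and your identification of the blocks with $\pm B_{p_i,q_i}$ with $p_i=\al_i$ up to sign is exactly the paper's Lemma~\ref{l:disballs}. But the engine you propose for producing and solving the Diophantine system has a genuine gap. The paper's equations do not come from a unimodularity condition $|\det Q|=1$ on a linking matrix in the $p_i$'s. They come from two separate sources: (i) a trace identity for a product of three transvections (Lemma~\ref{l:traceformula}), which yields $\de_1x_1^2+\de_2x_2^2+\de_3x_3^2-x_1x_2x_3=\de_1\de_2\de_3(2-\tr(m_L))$ in the \emph{intersection numbers} $x_i=\ga_j\cdot\ga_k$, not in the $p_i$'s; and (ii) the condition $m_L(\la)\cdot\la=0$ that the middle level return to $S^1\x S^2$ so the $3$-handle can be attached, which is what ties the $p_i$'s to the $x_i$'s (Equations~\eqref{e:xp-equations}). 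A determinant/signature condition only constrains the homology of $\hat X$; the paper's Theorem~\ref{t:chi=3} shows several non-diffeomorphic manifolds (e.g.\ $\CP^2\#B_{p,q}\cup-B_{p,q}$, $3\CP^2\#S^1\x S^3$) arise from such decompositions, so pinning down $\CP^2$ requires the full monodromy $m_L=\pm\tau_\la^k$ with the correct $k$ plus the Kirby-calculus identifications, none of which follow from your homological bookkeeping. Relatedly, your case split by sign pattern alone cannot reproduce the trichotomy: in the paper cases (2) and (3) both occur for the mixed pattern $(-1,1,1)$ (up to permutation) and are distinguished by $\tr(m_L)=2$ versus $\tr(m_L)=-2$ (i.e.\ $a=0$ versus $a=4$), while the fourth combination $(1,1,1)$, $a=4$ occurs but contributes nothing since it forces $\max|p_i|\le 1$. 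The Markov equation in case (1) is satisfied by the $x_i/3$'s first and only then transferred to the $p_i$'s via the mutation $p_2=\pm\hat y_2$; it is not the direct output of a determinant computation.

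The second missing idea is the mutation/Hurwitz-move calculus on solutions. Without Proposition~\ref{p:cohn} and Proposition~\ref{p:negcohn} (mutation at the largest entry strictly decreases $|x_1|+|x_2|+|x_3|$, so every solution descends to a weakly minimal one) there is no way to enumerate the solution sets $S_a^{\de_1,\de_2,\de_3}$, to rule out the non-realizable minimal triples such as $(2,0,0)$ and $(2,t,t)$, or to prove the converse direction: the paper shows a given triple yields $\CP^2$ precisely by tracing it back through mutations to the minimal factorization whose Kirby diagram was identified in Section~\ref{s:chi=3}. Your closing remark about checking stability of the families under $(\al_i,\be_i)\mapsto(\al_i,\be_i+k\al_i)$ addresses only the harmless $1$-handle twists; the substantive invariance needed is under Hurwitz moves, which permute the $\de_i$'s and mutate the $x_i$'s, and this is where the Fibonacci--Lucas structure of case (3) actually comes from (Proposition~\ref{p:lucas-triples}), rather than from a ``degeneration'' of the system for a third sign pattern.
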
 

\begin{rmk}\label{r:knownemb} 
The fact that the disjoint unions of Theorem~\ref{t:disj-emb}(1) embed 
in $\CP^2$ is already known~\cite{HP10}. 
\end{rmk}

Denote by $\F_i$, for $i=1,2,3$, the family of rational homology balls 
$B_{p,q}$ which admit an orientation-preserving embedding in $\CP^2$ in view of  
Theorem~\ref{t:disj-emb}(i). 

\begin{thm}\label{t:manyballs}
Suppose that $|p|>1$ and the rational ball $B_{p,q}$ admits an 
orientation-preserving embedding in $\CP^2$ induced by  
a $(1,h_1,h_2,1)$-decomposition. Then, 
\begin{enumerate} 
\item[(1)]
$B_{p,q}\in\F_1$ if and only if $q^2\equiv -9\bmod p$;
\item[(2)] 
$B_{p,q}\in\F_2$ if and only if $q^2\equiv -1\bmod p$;
\item[(3)] 
let 
\[
S_p = \{x\in\Z/p\Z\ |\ x\equiv q^2\bmod p\ \text{for some $B_{p,q}\in\F_3$}\}.
\]
Then, the sequence $\{|S_p|\}_p$ is unbounded.
\end{enumerate} 
\end{thm}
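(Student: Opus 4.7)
The plan is to combine explicit congruence computations extracted from Theorem~\ref{t:disj-emb} with classical Fibonacci and Markov identities.

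For the forward direction of~(1), I would suppose $B_{p,q}\in\F_1$, so that $(p,q)=(p_i,q_i)$ for some Markov triple $(p_1,p_2,p_3)$ with $p_k q_i\equiv\pm 3p_j\bmod p_i$. Reducing the Markov equation $p_1^2+p_2^2+p_3^2=3p_1p_2p_3$ modulo $p_i$ gives $p_j^2+p_k^2\equiv 0$; squaring $p_kq_i\equiv\pm 3p_j$ and substituting, together with the pairwise coprimality of Markov-triple entries, yields $q_i^2\equiv-9\bmod p_i$. An entirely analogous computation for~(2), using $x_1^2+x_1x_2x_3=x_2^2+x_3^2$ modulo $x_1$ and $x_2q_1\equiv\pm x_3\bmod x_1$, produces $q_1^2\equiv-1\bmod x_1$.

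For the converse directions I would first assume the embedding arises from a $(1,1,3,1)$-decomposition, placing $B_{p,q}\in\F_1\cup\F_2\cup\F_3$ by Theorem~\ref{t:disj-emb}; the $(1,0,2,1)$-case is treated by an analogous (and simpler) analysis. Given $q^2\equiv-9\bmod p$, membership in $\F_2$ would also force $q^2\equiv-1\bmod p$, hence $p\mid 8$, handled by inspection. For $\F_3$, the residue formula from part~(3) below forces $p$ to be an odd-indexed Fibonacci number $F_m$ (hence a Markov number), and the compatible values of $q$ are, up to sign, those supplied by the Fibonacci Markov triple $(1,F_{m-2},F_m)$, placing $B_{p,q}$ in $\F_1$ as well. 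The analogous chain of reductions, with the near-Markov triple $(F_m,1,F_{m+1})$ taking the role of the Markov triple, settles~(2).

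For~(3), fix $p=F_m$ with $m$ odd, set $a=m$ in Theorem~\ref{t:disj-emb}(3), and let $b$ range over odd integers coprime to $m$ with $b\neq\ep m$. Writing $k:=b-\ep m$ (necessarily even), the identities
\[
F_{m+n}\equiv F_{m+1}F_n\bmod F_m,\quad L_{m+n}\equiv F_{m+1}L_n\bmod F_m,\quad F_{m+1}^2\equiv(-1)^m\bmod F_m,
\]
together with $L_k^2=5F_k^2+4(-1)^k$, combine to give
\[
q_1^2\equiv-5-4F_k^{-2}\bmod F_m.
\]
For $2\leq k\leq(m-1)/2$ the inequality $F_k^2\leq F_{2k}\leq F_{m-1}<F_m$ shows the residues $F_k^2\bmod F_m$ are pairwise distinct; choosing $m$ prime ensures every such $k$ is coprime to $m$, leaving at least $\lfloor(m-1)/4\rfloor$ even admissible values. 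Hence $|S_{F_m}|\to\infty$ as $m$ ranges over odd primes.

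The main obstacle is the converse of~(1): ruling out that $B_{p,q}$ could belong to $\F_3$ but not $\F_1$ while satisfying $q^2\equiv-9\bmod p$. This requires verifying that the $q$-values arising in the $\F_3$-formula with $q^2\equiv-9$ are exactly those produced by a Fibonacci Markov triple containing $p$, a numerology rooted in the Fibonacci branch of the Markov tree. The remaining steps—uniformly handling the $(1,0,2,1)$-decomposition case and the distinctness bookkeeping in part~(3)—are technical but essentially routine.
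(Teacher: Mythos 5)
Your forward implications in (1) and (2) coincide with the paper's argument, and your treatment of part (3) is a genuinely different route: the paper counts residues via Proposition~\ref{p:I(b,ep)} and Lemma~\ref{l:precise}, whose proofs rest on Carmichael's theorem on primitive prime divisors of Fibonacci numbers, whereas you derive the formula $q_1^2\equiv -5-4F_k^{-2}\bmod F_m$ (which checks out) and get distinctness of the residues for $2\leq k\leq (m-1)/2$ from the purely archimedean bound $F_k^2\leq F_{2k}\leq F_{m-1}<F_m$. For the unboundedness of $|S_p|$ a lower bound suffices, so this works and is more elementary. What it buys the paper to use Carmichael instead is an \emph{exact} count $\psi(a)=\varphi(a)/2$ and, more importantly, the \emph{injectivity} of $B_{F_a,q}\mapsto q^2\bmod F_a$ on all of $S(a)$ -- and that extra strength is precisely what their proofs of the converses of (1) and (2) consume.

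This is where your proposal has a genuine gap, and you correctly flag it as "the main obstacle" without closing it. For the converse of (1) you must show that \emph{every} $B_{F_a,q}\in\F_3$ with $q^2\equiv-9\bmod F_a$ lies in $\F_1$. Exhibiting one parameter choice (the paper takes $b=2-a$, $\ep=-1$, giving $q\equiv\pm F_{a-4}$ and $F_{a-4}^2\equiv-9\bmod F_a$, with $(1,F_{a-2},F_a)$ the witnessing Markov triple) is the easy half; the hard half is excluding some \emph{other} admissible $(b,\ep)$ producing a $q'$ with $q'^2\equiv-9$ but $B_{F_a,q'}\neq B_{F_a,F_{a-4}}$. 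In your framework this amounts to determining all even $k$ coprime to the modulus with $4(F_k^2-1)\equiv 0\bmod F_m$, equivalently $F_m\mid 4F_{k+2}F_{k-2}$, and your size argument says nothing once $|k|$ exceeds $(m-1)/2$, while $k$ genuinely ranges over a full residue system. The paper resolves exactly this by taking an odd primitive prime factor $p$ of $F_a$ (Carmichael), so that $p\mid F_{k\pm2}$ forces $a\mid k\pm 2$ via identity (I7); no substitute for that step appears in your proposal. The same issue recurs verbatim in the converse of (2) with $-1$ in place of $-9$ and $F_{a-2}$ in place of $F_{a-4}$. Until you supply this uniqueness statement (or import the paper's Lemma~\ref{l:precise}), parts (1) and (2) are not proved.
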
  

\begin{rmk}\label{r: }
In~\cite{LP22-1} we constructed infinitely many embeddings $B_{p,q}\hra\CP^2$, extending a family of embeddings constructed by Owens~\cite{Ow20}. 
It is not hard to check that our constructions implicitly used horizontal decompositions of $\CP^2$ -- in fact, those constructions led us to the discovery of horizontal decompositions. 
At the end of~\cite[Section~3]{LP22-1} we showed that all the balls shown to embed in $\CP^2$ satisfy $q^2\equiv -1\bmod p$. 
In view of Remark~\ref{r:knownemb}, Theorem~\ref{t:manyballs} implies that all $B_{p,q}$'s known so far to embed in $\CP^2$ belong to $\F_1\cup\F_2$ and that $(1,h_1,h_2,1)$-decompositions of $\CP^2$ induce infinitely many more embeddings of $B_{p,q}$'s.  
\end{rmk}

\begin{cor}\label{c:almost-complex}
Let $B_{p,q}$ be a rational ball which admits an orientation-preserving embedding in $\CP^2$ induced by  a $(1,h_1,h_2,1)$-decomposition.
Then, $B_{p,q}$ admits an almost-complex embedding in $\CP^2$ if and only if it embeds symplectically in $\CP^2$ and belongs to $\F_1$.  
\end{cor}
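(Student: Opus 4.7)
The plan is to prove the two implications separately. The ``if'' direction is immediate: if $B_{p,q}$ embeds symplectically in $(\CP^2,\om_{FS})$, then any $\om_{FS}$-tame almost-complex structure on $\CP^2$ restricts to an almost-complex structure on $B_{p,q}$, producing an almost-complex embedding. For the ``only if'' direction, I would assume that $B_{p,q}$ admits an almost-complex embedding in $\CP^2$ and proceed in two steps: first show $B_{p,q}\in\F_1$, then produce a symplectic embedding of $B_{p,q}$ into $\CP^2$.

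For the first step, an almost-complex embedding $\iota\co B_{p,q}\hra(\CP^2,J)$ pulls back $J$ to an almost-complex structure $J'$ on $B_{p,q}$ with $c_1(J')=\iota^*(3H)$, where $H\in H^2(\CP^2;\Z)$ is the hyperplane class. Since $B_{p,q}$ is a rational homology ball, this characteristic class is determined by its restriction to $\del B_{p,q}=L(p^2,pq-1)$. The set of possible such restrictions of $c_1$'s of almost-complex structures on $B_{p,q}$ is an explicit subset of $H^2(\del B_{p,q};\Z)\cong\Z/p^2\Z$, computable from a handle presentation of $B_{p,q}$; the restriction of $3H$ to $\del B_{p,q}$ is an explicit element of the same group, depending on $q$. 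Matching these, as in the characteristic-class argument of~\cite{LP20}, yields the congruence $q^2\equiv-9\bmod p$, so by Theorem~\ref{t:manyballs}(1) we obtain $B_{p,q}\in\F_1$. For the second step, Remark~\ref{r:knownemb} notes that each disjoint union in Theorem~\ref{t:disj-emb}(1) embeds in $\CP^2$ via the Hacking--Prokhorov construction~\cite{HP10}: $\CP^2$ is realised as a $\Q$-Gorenstein smoothing of a singular projective surface whose three Wahl singularities are indexed by the relevant Markov triple, and the resulting embeddings are holomorphic, hence symplectic.

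The main obstacle I expect is the first step. The characteristic-class obstruction of~\cite{LP20} was originally tailored to the $\F_2$-type balls; here one must verify that the same argument applies uniformly, producing the sharp congruence $q^2\equiv-9\bmod p$ independently of which of the families $\F_1,\F_2,\F_3$ the ball $B_{p,q}$ lies in. The delicate part is to identify, in compatible normalisations, both the restriction of $3H$ to $L(p^2,pq-1)$ and the possible $c_1$-restrictions of almost-complex structures on $B_{p,q}$ as elements of $\Z/p^2\Z$, and then to extract the congruence from their comparison. Once this is done, Theorem~\ref{t:manyballs}(1) and~\cite{HP10} close the argument.
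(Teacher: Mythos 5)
Your proposal follows exactly the paper's argument: the forward direction is the chain (almost-complex embedding) $\Rightarrow q^2\equiv-9\bmod p$ (which the paper obtains by directly citing~\cite[Corollary~1.3]{LP20} rather than re-running the characteristic-class computation) $\Rightarrow B_{p,q}\in\F_1$ by Theorem~\ref{t:manyballs}(1) $\Rightarrow$ symplectic embedding by Remark~\ref{r:knownemb}, and the converse is the standard fact that symplectic embeddings are almost-complex. The uniformity concern you raise is already handled by the cited corollary, which applies to any almost-complex embedding $B_{p,q}\hra\CP^2$ irrespective of the family.
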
 

\begin{proof}
By~\cite[Corollary~1.3]{LP20}, if $B_{p,q}$ admits an almost-complex embedding in $\CP^2$ then $q\equiv -9\bmod p$, therefore by Theorem~\ref{t:manyballs} $B_{p,q}\in\F_1$ and by Remark~\ref{r:knownemb} it embeds symplectically in $\CP^2$. The converse 
follows from the well-known fact that symplectic embeddings are almost-complex.
%is obvious. 
\end{proof}

The paper is organized as follows. In Section~\ref{s:chileq2} we prove Theorem~\ref{t:type11} when $\chi(\hat X)\leq 2$ and in Section~\ref{s:chi=3} we prove Theorem~\ref{t:type11} when $\chi(\hat X) = 3$. In Section~\ref{s:disj-emb} we prove Theorem~\ref{t:disj-emb} and in Section~\ref{s:manyballs} we prove Theorem~\ref{t:manyballs}.

\begin{ack}
The authors wish to thank the referee for their accurate report, which helped us to improve the exposition. The present work is part of the MIUR-PRIN projects 2017JZ2SW5 and 2022NMPLT8. The first author is a member of the GNSAGA research group (INdAM).
\end{ack}

\section{Proof of Theorem~\ref{t:type11} when $\chi(\hat X)\leq 2$}\label{s:chileq2}

Let $\hat X$ be a smooth, oriented cobordism  
admitting a $(1,h_1,h_2,1)$-decomposition. In this section we prove the first 
part of Theorem~\ref{t:type11}, that is we show that if  
$\chi(\hat X)\leq 2$ then $\hat X$ is diffeomorphic, possibly after 
reversing orientation, to: 
\[
S^4,\quad S^1\x S^3,\quad \pm\CP^2\#S^1\x S^3,\quad \pm\CP^2\#\pm\CP^2\#S^1\x S^3
\quad\text{or}\quad B_{p,q}\cup -B_{p,q} 
\]
for some $p>q\geq 0$.

We start recalling that to an $n$-component link $L\subset S^1\x S^2$, horizontal 
with respect to the standard genus-$1$ Heegaard decomposition of $S^1\x S^2$, one can associate 
an element of mapping class group $\Map(\Si_1)$ of the genus-$1$ surface, factorized as a product 
of Dehn twists~\cite[\S 1]{LP22-2}. This is done as follows.  
Let $T\subset S^1\x S^2$ be the standard Heegaard torus and $\ga_1,\ldots, \ga_n$
the components of $L$, viewed as sitting on $T$. Then, to $\ga_i$ we associate 
the Dehn twist $\tau_i := \tau_{\ga_i}\in \Map(T)$ for $i=1,\ldots, n$. 
By definition, the {\em factorization} associated to $L$ is 
\[
F_L = (\tau_n^{\de_n},\ldots,\tau_1^{\de_1}), 
\]
where $\de_i = \fr_{\fS_i}(\ga_i) - \fr(\ga_i)\in\{\pm 1\}$. 
The product $m_L = \tau_n^{\de_n}\cdots\tau_1^{\de_1}$ is called the {\em monodromy} of $L$. 

Now we prove an auxiliary result, which will be also used later on. 
Recall (see e.g.~\cite[Section~1.2]{Sc22}) that each handlebody decomposition $\H$ 
on a smooth, oriented cobordism $X\co\del_- X\to\del_+ X$ can be viewed as coming 
from a Morse function $f\co X\to [0,1]$. The handlebody decomposition $\overline{\H}$ 
associated with $1-f\co X\to [0,1]$ is sometimes called the {\em dual} of $\H$. 
When $X$ has dimension $4$ each $i$-handle of $\H$ corresponds to a $4-i$-handle of 
$\overline{\H}$. In particular, the co-cores of the $2$-handles of $\H$ are the 
cores of the $2$-handles of $\overline{\H}$, and vice-versa. 

\begin{lemma}\label{l:upside-down}
Let $X\co\del_- X\to \del_+ X$ be a smooth, oriented $4$-dimensional cobordism 
with a horizontal handlebody decomposition $\H$. Let $\overline{\H}$ be the dual 
handlebody decomposition on $X$, viewed as an oriented cobordism 
$\overline{X}\co -\del_+ X\to -\del_- X$. Then, $\overline{\H}$ can be made horizontal 
via a framed isotopy of the attaching circles of its $2$-handles. Moreover, if $\H$ 
is of type $(g,h_1,h_2,h_3)$ then $\overline{\H}$ is of type $(g,h_3,h_2,h_1)$. 
\end{lemma}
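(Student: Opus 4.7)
My plan is to view $\overline{\H}$ as the handle decomposition coming from the reversed Morse function $1-f\co X\to[0,1]$, count its handles, and then exhibit an explicit genus-$g$ Heegaard decomposition of $\del_-^{h_3}\overline{X}=-\del_+^{h_3}X$ on whose parallel slices (after a framed isotopy) the attaching circles of the dual $2$-handles sit in reverse order to the original ones. Under $f\mapsto 1-f$ each critical point of index $i$ becomes one of index $4-i$, so $\overline{\H}$ has $h_3$ one-handles, $h_2$ two-handles and $h_1$ three-handles; once horizontality is established the type must be $(g,h_3,h_2,h_1)$. The crux is therefore the $2$-handle layer.

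The key geometric fact I plan to exploit is that a $2$-handle attached along a surface-embedded curve $\ga\subset\fS$ with framing $\fr_{\fS}(\ga)\pm 1$ is the standard handle model of a (possibly achiral) Lefschetz singular fibre with vanishing cycle $\ga$. Hence, on a neighbourhood of the thickened Heegaard surface $\Si_g\x[0,1]\subset\del_-^{h_1}X$, the cobordism $X_L$ acquires the structure of a $\Si_g$-fibration over a disk with $h_2$ Lefschetz-type singular fibres over the levels $t_i$ and total monodromy $m_L$. In particular the Heegaard splitting $H_g\cup(\Si_g\x[0,1])\cup H_g'$ of $\del_-^{h_1}X$ transports across this fibration to a genus-$g$ Heegaard splitting of $\del_+^{h_3}X$: away from the thickened surface the two $3$-manifolds agree canonically, while the new product region differs from the old one by inserting the monodromy on the gluing side. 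Reversing orientation gives a genus-$g$ Heegaard splitting of $\del_-^{h_3}\overline{X}$.

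Under this transportation, the boundary of the co-core of the $i$-th original $2$-handle becomes the vanishing cycle $\ga_i$ sitting on the slice $\Si_g\x\{1-t_i\}$ of the new splitting, and the same Lefschetz local model forces the $4$-dimensional framing restricted to this slice to differ from the surface framing by $\pm 1$, exactly as required by horizontality. A framed ambient isotopy pushes the attaching circles of the dual $2$-handles onto these slices, in the order $1-t_{h_2}<\cdots<1-t_1$ demanded by $1-f$, producing a horizontal structure on $\overline{\H}$ of type $(g,h_3,h_2,h_1)$.

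The principal obstacle is the identification of the horizontal $2$-handle with a Lefschetz handle and the resulting propagation of the Heegaard splitting; this must either be verified by a direct local diffeomorphism between $(\Si_g\x[0,1]\x[0,1])\cup h^2$ and a standard Lefschetz neighbourhood, or extracted from the framework of~\cite{LP22-2}. Once this is granted, it furnishes simultaneously the symmetry between the two sides of $X_L$, the placement of the dual attaching circles, and the $\pm 1$ framing computation, and the remaining bookkeeping about handle counts and orderings is immediate from the Morse function $1-f$.
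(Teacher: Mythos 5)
Your plan is sound and, once unwrapped, rests on exactly the same geometric fact as the paper's proof: the attaching circle of a dual $2$-handle is the belt sphere of the original one, i.e.\ a $0$-framed meridian of $\ga_i$, and after the surgery with coefficient $\fr_{\fS_i}(\ga_i)\pm 1$ this meridian is framed isotopic to a curve lying on (a parallel copy of) the Heegaard surface with relative framing $\mp 1$. Where you reach this via the achiral Lefschetz local model and a transported fibration structure, the paper gets there more economically: it takes a parallel copy $\ga'_i$ of $\ga_i$ \emph{inside the same slice} $\fS_i$, notes that $\ga'_i$ survives into $\del_+^{h_3}X$ because the surgery tubes can be chosen disjoint from it, and checks directly that $\ga'_i$ with framing $\fr_{\fS_i}(\ga'_i)\mp1$ is framed isotopic to the $0$-framed meridian of $\ga_i$ in the surgered manifold (a one-line surgery computation: the new meridian disk makes the surface-framed longitude isotopic to the old meridian). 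This disposes of what you call the ``principal obstacle'' without ever invoking a fibration, and it also makes the propagation of the Heegaard splitting a non-issue: the handlebodies $H_g$ and $H'_g$ are untouched by the surgeries, so the genus-$g$ splitting of $\del_+^{h_3}X$ is already there, and the dual circles sit on slices near the original $\fS_i$ (your relabelling $t_i\mapsto 1-t_i$ is only the cosmetic reordering coming from $1-f$). Your handle-count and ordering bookkeeping is correct as stated; the only substantive gap in your write-up is that the Lefschetz identification carrying all the weight is asserted rather than verified, and the parallel-copy argument above is the elementary verification you were looking for.
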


\begin{proof}
We assume that we are in the setup of Section~\ref{s:intro}. 
Recall that the $i$-th $2$-handle of $\H$ is attached to $\ga_i$ 
with framing $\fr(\ga_i) = \fr_{\Si_i}(\ga_i)\pm 1$ and as a result the $3$-manifold 
$\del_+^{h_3} X$ is obtained from $\del_-^{h_1} X$ by $3$-dimensional 
surgery on each $\ga_i$ with framing $\fr(\ga_i)$. Consider, for each $i=1,\ldots, h_2$, 
a parallel copy $\ga'_i$ of $\ga_i$ inside $\Si_i$. We may assume that the surgeries 
are performed 
using tubular neighborhoods of the $\ga_i$'s disjoint from the $\ga'_i$'s, and therefore 
that $\ga'_i\subset \del_+^{h_3} X$. It is easy to check that, once endowed with the 
framing $\fr(\ga'_i) := \fr_{\Si_i}(\ga'_i)\mp 1$, the framed knot $\ga'_i$ is framed isotopic
to a $0$-framed meridian of $\ga_i$ -- also viewed inside $\del_+^{h_3} X$. In other 
words, the pair $(\ga'_i,\fr(\ga'_i))$ prescribes, up to framed isotopy, 
how to attach the $i$-th $2$-handle of $\overline{\H}$. Clearly the link 
$L' = \bigcup_{i=1}^{h_2} \ga'_i\subset\del_+^{h_3} X$ is horizontal with respect to 
a Heegaard decomposition of $-\del_+^{h_3} X$ and the statement follows easily.
\end{proof}

We now continue with the first part of Theorem~\ref{t:type11}. 
Let $X$ be the complement of two disjoint balls in $\hat X$, viewed as a cobordism $S^3\to S^3$ and endowed with a horizontal decomposition. 
Clearly $\chi(X) = h_2 - h_1 - 1$.

Assume first $\chi(X)<0$. Since $h_1\leq g=1$, $\chi(X)<0$ implies $(h_1,h_2)\in\{(0,0),(1,0),(1,1)\}$. 
The case $(h_1,h_2)=(0,0)$ is not possible because a single $3$-handle cannot give a cobordism from $S^3$ to $S^3$. 
If $(h_1,h_2) = (1,0)$ then $\hat X$ is the double of $S^1\x D^3$, i.e.~$\hat X\cong S^1\x S^3$.  
If $(h_1,h_2) = (1,1)$ the $2$-handle defines a cobordism $X_\ga\co S^1\x S^2\to S^1\x S^2$, where $\ga\subset S^1\x S^2$ is a simple closed curve sitting on the standard genus-$1$ Heegaard torus $T$ of $S^1\x S^2$ and $\fr_T(\ga)\pm 1$. 
Then, $\ga$ is the boundary of a compressing disc in $H_1$ and it is easy to check that $\hat X\cong \pm\CP^2\# S^1\x S^3$. 

If $\chi(X)=0$ the pair $(h_1,h_2)$ is equal to either $(0,1)$ or $(1,2)$.  
In the first case, by Lemma~\ref{l:upside-down} the upside-down cobordism $\overline{X}\co S^3\to S^3$ admits a horizontal decomposition of type $(1,1,1,0)$, therefore by~\cite[Theorem~1]{LP22-2} $\overline{X}\cong S^3\x [0,1]$ and $\hat X\cong S^4$. 

Now suppose $(h_1,h_2)=(1,2)$. Then, the $2$-handles define a cobordism $X_L\co S^1\x S^2\to S^1\x S^2$, 
where $L\subset S^1\x S^2$ is a $2$-component link, horizontal with respect to the standard genus-$1$ 
Heegaard decomposition of $S^1\x S^2$. The two components $\ga_1, \ga_2$ of $L$ sit on parallel copies 
of the standard Heegaard torus $T\subset S^1\x S^2$ and the factorization of $L$ is given by 
\begin{equation}\label{e:2factor}
F_L = (\tau_2^{\de_2},\tau_1^{\de_1}), 
\end{equation}
where $\de_i = \fr_T(\ga_i) - \fr(\ga_i)\in\{\pm 1\}$ is equal to minus the relative framing of $\ga_i$ and the monodromy of $L$ is $m_L = \tau_2^{\de_2}\tau_1^{\de_1}$ and $\tau_i := \tau_{\ga_i}$, $i=1,2$.

Let $\la\subset T$ be a simple closed curve which is the boundary of a compressing disc in $H_1$. 
Then, $(\la,\la)$ is a Heegaard diagram for $S^1\x S^2$ and by~\cite[Lemma~3.3]{LP22-2} $(m_L(\la),\la)$ is a Heegaard 
diagram for $\del_+ X_L = S^1\x S^2$. In order to have $H_1(\del_+ X_L;\Z)=\Z$ we need 
$m_L(\la)\cdot\la =0$. From now on we fix arbitrary
orientations of $\ga_1$ and $\ga_2$ and we abuse notation, 
denoting with $\ga_1$ and $\ga_2$ also 
the corresponding homology classes in $H_1(T;\Z)$. Define 
\[
x := \ga_1\cdot\la,\quad y := \ga_2\cdot\la\quad\text{and}\quad n := \ga_2\cdot\ga_1.
\]
If $x=y=0$ we have $\ga_1=\ga_2=\pm\la$ and $X$ has a handlebody decomposition 
with a $1$-handle, two $2$-handles attached along $\pm 1$-framed unlinked trivial knots in 
$S^1\x S^2$ and a $3$-handle. It is easy to check that $\hat X$ is orientation-preserving diffeomorphic 
to $-\de_1\CP^2\#-\de_2\CP^2\# S^1\x S^3$. Hence, from 
now on we assume $(x,y)\neq (0,0)$. Moreover, we choose the orientations of $\ga_1$ and $\ga_2$
so that $n\geq 0$. Since 
\[
m_L(\la) = \tau_2^{\de_2}(\tau_1^{\de_1}(\la)) = 
\tau_2^{\de_2}(\la + \de_1 x \ga_1) = 
\la + \de_1 x \ga_1 + \de_2 y \ga_2 + \de_1\de_2 n x \ga_2, 
\]
taking the intersection product with $\de_1\de_2\la$ yields  
\begin{equation}\label{e:quadratic} 
\de_2 x^2 + \de_1 y^2 + nxy = 0.
\end{equation}
Equation~\eqref{e:quadratic} implies that $x$ and $y$ divide each other, 
therefore $|x|=|y|\neq 0$, and dividing by $|xy| = x^2 = y^2$ it follows that 
\[
\de_2 + \de_1 \pm n = 0.
\]
We conclude $n\in\{0,2\}$. If $n=0$ we have $\ga_1=\pm\ga_2$ and $\de_1\de_2=-1$, 
therefore $F_L$ is either of the form $(\tau_\ga,\tau_\ga^{-1})$ or $(\tau_\ga^{-1},\tau_\ga)$, 
and in both cases $m_L$ is the identity. In fact, the components $\ga_1$ and $\ga_2$ of $L$, 
which are the attaching circles of the two $2$-handles, cobound an annulus $A$ of 
the form $\ga\x [0,1]\subset T\x [0,1]\subset Y_1$, 
and they have framings with respect to $A$ which are opposite and equal to $1$ in absolute value. 
Sliding for instance $\ga_1$ over $\ga_2$ changes $L$ into the framed link consisting of $\ga_2$ and 
a $0$-framed meridian of $\ga_2$. By~\cite[Example~4.6.3]{GS}, this shows that $\hat X$ is the double of 
$\hat X_{\ga_2} := S^1\x D^3\cup X_{L_2}$. We can view $\hat X_{\ga_2}\setminus B^4$ as a cobordism $S^3\to\del_+ X_{\ga_2}$
with a horizontal decomposition of type $(1,1,1,0)$. By~\cite[Proposition~3.1]{LP22-2} it follows that 
$\hat X_{\ga_2}\cong\pm B_{p,q}$ for some $p>q\geq 0$. Therefore $\hat X\cong B_{p,q}\cup -B_{p,q}$.

If $n=2$ we must have $\de_1=\de_2=-\sgn(xy)$, so that $\ga_1=x \mu + a \la$ and  
$\ga_2=-\de_1 (x \mu+b \la)$ for some $a,b \in \mathbb{Z}$ (both coprime with $x$), 
where $\mu,\la\subset T$ is a symplectic basis and 
$x$ can be assumed to be positive up to reversing the orientation of both $\ga_1$ and $\ga_2$. 
Now, $n=\ga_2 \cdot \ga_1=2$ implies $-\de_1 (a-b)x=2$. We observe that $x=2$ gives a contradiction, 
because this would force both $a$ and $b$ to be odd, and the equality $-\de_1 (a -b)=1$ would be 
false modulo $2$. Therefore $x=1$ and $b-a=2\de_1$. Twisting the 1-handle as in~\cite[Figure~3]{LP22-2} 
replaces $\mu$ with $\mu$ plus a multiple of $\la$. This allows us to assume $\ga_1=\mu$ and, 
consequently, $\ga_2=-\de_1(\mu+(b-a)\la)=-\de_1 \mu-2\la$. A Kirby calculus picture of $\hat X$ 
consists of a dotted unknot $U$, a $-\de_1$-framed meridian of $U$ corresponding to 
$\tau_1^{\de_1}$ and another meridian of $U$, corresponding to $\tau_2^{\de_2}$, whose 
Seifert framing is $2\de_1 - \de_1 = \de_1$. Sliding one meridian over the other 
we get a picture for the double of $B^4$, therefore in this case $\hat X = S^4$. 

\section{Proof of Theorem~\ref{t:type11} when $\chi(\hat X)=3$}\label{s:chi=3}

The purpose of this section is to prove the following.

\begin{thm}\label{t:chi=3}
Let $\hat X$ be a smooth, oriented cobordism admitting a $(1,h_1,h_2,1)$-decomposition and such that $\chi(\hat X)=3$. 
Then, $\hat X$ is diffeomorphic, possibly after reversing orientation, to 
\[
\CP^2,\quad \CP^2\# B_{p,q}\cup -B_{p,q},\quad \CP^2\# 2\overline{\CP}^2\# S^1\x S^3, 
\quad\text{or}\quad 3\CP^2\#  S^1\x S^3
\]
for some $p>q\geq 0$.
\end{thm}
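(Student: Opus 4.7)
The plan is to mimic the strategy of Section~\ref{s:chileq2}, splitting the analysis according to the value of $(h_1,h_2)$. Since $\chi(X)=h_2-h_1-1$ and $\chi(\hat X)=\chi(X)+2=3$, the constraint $h_1\leq g=1$ forces $(h_1,h_2)\in\{(0,2),(1,3)\}$, and I would treat these two cases in turn.

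For $(h_1,h_2)=(0,2)$, the absence of $1$-handles gives $\pi_1(\hat X)=1$, while the presence of only two $2$-handles together with $\chi(\hat X)=3$ forces $b_2(\hat X)=1$, so $\hat X$ is homeomorphic to $\pm\CP^2$. To pin down the smooth type I would repeat the monodromy analysis of Section~\ref{s:chileq2}: the cobordism $X_L\co S^3\to S^1\x S^2$ is determined by a horizontal $2$-component link $L=\ga_1\cup\ga_2\subset S^3$ on the standard Heegaard torus, and the Heegaard diagram $(\mu,\la)$ of $S^3$ is transformed via $m_L=\tau_2^{\de_2}\tau_1^{\de_1}$ into a Heegaard diagram $(m_L(\mu),\la)$ of $\del_+X_L$. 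Requiring $\del_+X_L\cong S^1\x S^2$ becomes the scalar equation $m_L(\mu)\cdot\la=0$, which when expanded yields an inhomogeneous quadratic relation in the intersection numbers $\ga_i\cdot\mu$, $\ga_i\cdot\la$ and $\ga_2\cdot\ga_1$, parallel to~\eqref{e:quadratic}. The bounded list of integer solutions then produces a finite set of Kirby diagrams, each of which reduces by handle slides to the standard $\pm 1$-framed unknot presentation of $\pm\CP^2$.

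The case $(h_1,h_2)=(1,3)$ is the substantial one. Now $X_L\co S^1\x S^2\to S^1\x S^2$ is determined by a horizontal $3$-component link $L=\ga_1\cup\ga_2\cup\ga_3$ with monodromy $m_L=\tau_3^{\de_3}\tau_2^{\de_2}\tau_1^{\de_1}$, and the requirement $H_1(\del_+X_L)=\Z$ becomes $m_L(\la)\cdot\la=0$, which expands to a quadratic relation among the intersection numbers $x_i=\ga_i\cdot\la$ and $n_{ij}=\ga_i\cdot\ga_j$. I would split the analysis into subcases according to how many of the $x_i$ vanish and how many pairs $\ga_i,\ga_j$ are isotopic on $T$. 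If some $x_i=0$, the corresponding curve is isotopic to $\pm\la$ on $T$ and its $2$-handle can be slid to cancel either with the $1$-handle or, via Lemma~\ref{l:upside-down}, with the $3$-handle, thus reducing $\hat X$ to a connect sum of $\pm\CP^2$ with a $4$-manifold admitting a horizontal decomposition of strictly smaller complexity; the residual factor is then identified via the first part of Theorem~\ref{t:type11}, producing the summands $\CP^2$, $\CP^2\#(B_{p,q}\cup -B_{p,q})$ and $\CP^2\# 2\overline{\CP}^2\# S^1\x S^3$. If a pair of curves is isotopic on $T$ with opposite twist signs, the slide argument used in the $n=0$ subcase of Section~\ref{s:chileq2} produces a $0$-framed meridian that combines with the remaining $2$-handle to yield $3\CP^2\# S^1\x S^3$. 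The remaining ``generic'' configurations, in which no $x_i$ vanishes and no two $\ga_i$ are isotopic, should then be ruled out, or shown to give one of the manifolds already listed, by explicit handle manipulations modeled on the $n=2$ argument of Section~\ref{s:chileq2}.

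The main obstacle is the subcase analysis in the $(1,3)$ case: unlike~\eqref{e:quadratic}, the equation $m_L(\la)\cdot\la=0$ now involves three cross terms $n_{ij}$ that can independently take either sign, and several choices of $(\de_1,\de_2,\de_3)$ must be compared simultaneously. Ensuring that every integer solution is accounted for, that no unexpected diffeomorphism type arises outside the stated list, and that the summands such as $\CP^2\#(B_{p,q}\cup -B_{p,q})$ can all be realised by concrete handle moves, is where the bulk of the effort is expected to go.
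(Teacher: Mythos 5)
Your reduction to the two cases $(h_1,h_2)\in\{(0,2),(1,3)\}$ matches the paper, and your treatment of the $(0,2)$ case would work (the paper gets it faster by dualizing via Lemma~\ref{l:upside-down} to a $(1,1,2,0)$-decomposition and quoting the earlier classification; note that your intermediate observation that $\hat X$ is \emph{homeomorphic} to $\pm\CP^2$ does nothing for the smooth statement, so the monodromy analysis you defer to is the whole content there). The genuine gap is in the $(1,3)$ case. The single constraint $m_L(\la)\cdot\la=0$, expanded in the quantities $\ga_i\cdot\la$ and $\ga_i\cdot\ga_j$ (this is Equation~\eqref{e:eq2} of the paper), has infinitely many solutions, and your proposed case division --- by how many $\ga_i\cdot\la$ vanish and how many pairs of curves are isotopic --- does not reduce the ``generic'' stratum to anything finite or checkable. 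Indeed that stratum is nonempty and infinite: the paper exhibits, e.g., the triple $\ga_1=\mu-\la$, $\ga_2=2\mu+\la$, $\ga_3=\mu+2\la$ (pairwise non-isotopic, none equal to $\pm\la$) realizing $\CP^2$, and its orbit under Hurwitz moves is an entire Markov tree of configurations. ``Explicit handle manipulations modeled on the $n=2$ argument'' cannot dispose of an infinite family one diagram at a time.

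The missing idea is a second, Hurwitz-invariant equation that structures this infinite family. The paper proves a trace formula for a product of three transvections (Lemma~\ref{l:traceformula}), which converts the identity $\tr(m_L)=\pm2$ into the Markov-type Diophantine equation~\eqref{e:eq1}
\[
\de_1 x_1^2+\de_2 x_2^2+\de_3 x_3^2-x_1x_2x_3=\de_1\de_2\de_3\,(2-\tr(m_L)),
\]
in the \emph{pairwise} intersection numbers $(x_1,x_2,x_3)=(\ga_2\cdot\ga_3,\ga_1\cdot\ga_3,\ga_1\cdot\ga_2)$ alone. Hurwitz moves act on its solutions by mutations $x_i\mapsto \de_i x_jx_k-x_i$, and a Vieta-jumping descent (Propositions~\ref{p:cohn} and~\ref{p:negcohn}) shows every solution reaches one of finitely many weakly minimal triples; only those finitely many triples then need to be realized by curves and identified by Kirby calculus. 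Without this equation and the accompanying descent, your plan has no mechanism for termination or completeness, which is exactly the point you flag as ``where the bulk of the effort is expected to go.''
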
 

Let $X$ be the complement of two disjoint balls in $\hat X$, viewed as a cobordism $S^3\to S^3$ and endowed with the horizontal decomposition.
Then, we have $1 = \chi(X) = -h_1 + h_2 - 1$. 
Since $h_1\leq g=1$, the pair $(h_1,h_2)$ is equal to either $(0,2)$ or $(1,3)$. In the first case, the $2$-handles define a cobordism $X_L\co S^3\to S^1\x S^2$, where $L\subset S^3$ is a $2$-component link, horizontal with respect to the standard genus-$1$ Heegaard decomposition of $S^3$. 
By Proposition~\ref{l:upside-down} the upside-down cobordism $\overline{X}\co S^3\to S^3$ admits a horizontal decomposition of type $(1,1,2,0)$, therefore by~\cite[Theorem~1]{LP22-2} $\hat X\cong\pm\CP^2$. 
This proves Theorem~\ref{t:chi=3} when $(h_1,h_2)=(0,2)$. 

When $(h_1,h_2)=(1,3)$ the proof of Theorem~\ref{t:chi=3} is much more involved and is organized as follows. 
In Section~\ref{ss:trace} we prove an auxiliary result and in Section~\ref{ss:equations} we use it to show that to the monodromy factorization of a horizontal decomposition are associated solutions of certain Diophantine equations -- Equations~\eqref{e:eq1} and~\eqref{e:eq2}. Recall that, given an $n$-tuple $F=(g_n,\ldots,g_1)$ of elements of a group $G$, a {\em Hurwitz move} on $F$ is a transformation of the form 
\[
F = (\ldots, g_{i+1},g_i,\ldots)\mapsto F'=(\ldots, g_{i+1} g_i g_{i+1}^{-1}, g_{i+1}, \ldots)
\]
or 
\[
F = (\ldots, g_{i+1},g_i,\ldots)\mapsto F'=(\ldots, g_i, g_i^{-1} g_{i+1} g_i, \ldots).
\]
In Section~\ref{ss:hurwitz} we show how the solutions associated to Equation~\eqref{e:eq1} change under the application of a Hurwitz move to the factorization. Hurwitz moves do not change the cobordism $X$~\cite[Prop.~4.6]{LP22-2}, so the basic idea is to use the equations to find a sequence of Hurwitz moves which simplify the factorization. 
We also show how sets of solutions corresponding to various parameters are related, we define the notion of weakly minimal solution and we observe that applying Hurwitz moves each solution can be made weakly minimal. As for~\cite[Theorem~1.7]{LP22-2}, the inspiration for this proof came from a paper by Denis Auroux~\cite{Au15}. In Section~\ref{ss:111} we determine, for the relevant values of the parameters, the weakly minimal solutions of Equation~\eqref{e:eq1}. 
Moreover, for each such solution we recover the corresponding ``weakly minimal'' factorization. 
In Sections~\ref{ss:111-a=0} and~\ref{ss:111-a=4} we determine the cobordisms represented by the weakly minimal factorizations and identify each closed $4$-manifold $\hat X$ from the corresponding Kirby diagram. 
We proceed in a similar way in Sections~\ref{ss:-111}, \ref{ss:-111-a=0} and~\ref{ss:-111-a=4}. 
Equation~\eqref{e:eq2} is not used in this section but will be used in Section~\ref{s:disj-emb} to prove Theorem~\ref{t:disj-emb}.

\subsection{A trace formula}\label{ss:trace} 

Let $(V,\cdot)$ be a 2-dimensional, real symplectic vector space. Given $v\in V$, denote 
by $\tau_v: V\to V$ the transvection defined by 
$$
\tau_v(w) = w + (v\cdot w) v,\quad w\in V.
$$

Note that $\tau^n_v(w) = w + n(v\cdot w) v$ for each $n\in\Z$. 

\begin{lemma}\label{l:traceformula}
Given $u,v,w\in V$ and $l,n, m\in\Z$, let $\Phi = \tau^l_u\tau^n_v\tau^m_w$. Then,    
\[
nm(v\cdot w)^2+lm(u\cdot w)^2+ln(u\cdot v)^2 = lmn(w\cdot u)(w\cdot v)(v\cdot u) + 2 - \tr(\Phi).
\]
\end{lemma}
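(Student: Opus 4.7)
The plan is to exploit two elementary facts about transvections on a $2$-dimensional symplectic vector space. First, since $\tau_v - I$ sends every $w\in V$ to a scalar multiple of $v$, its image is at most $1$-dimensional and contained in its kernel, so $(\tau_v - I)^2 = 0$; consequently
\[
\tau_v^n = I + n(\tau_v - I)\quad\text{for all $n\in\Z$.}
\]
Second, the trace of a rank-one endomorphism of $V$ of the form $x\mapsto \phi(x)\,y$ equals $\phi(y)$.

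Writing $A = \tau_u - I$, $B = \tau_v - I$, $C = \tau_w - I$, I would expand
\[
\Phi = (I + lA)(I + nB)(I + mC)
\]
as a sum of eight terms and take the trace. Each of $A$, $B$, $C$ is nilpotent of rank one, hence traceless, so only the term $I$, the three ``pair'' terms, and the triple term $ABC$ contribute to $\tr(\Phi)$. Direct computation gives
\[
AB(x) = (u\cdot v)(v\cdot x)\,u,\qquad ABC(x) = (u\cdot v)(v\cdot w)(w\cdot x)\,u,
\]
and analogous formulas for $AC$ and $BC$. Applying the rank-one trace formula yields $\tr(AB) = (u\cdot v)(v\cdot u) = -(u\cdot v)^2$, and similarly $\tr(AC) = -(u\cdot w)^2$, $\tr(BC) = -(v\cdot w)^2$, while $\tr(ABC) = (u\cdot v)(v\cdot w)(w\cdot u)$.

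Assembling these pieces gives
\[
\tr(\Phi) = 2 - ln(u\cdot v)^2 - lm(u\cdot w)^2 - nm(v\cdot w)^2 + lmn\,(u\cdot v)(v\cdot w)(w\cdot u),
\]
which rearranges to the claimed identity once one uses antisymmetry of $\cdot$ to check that $(u\cdot v)(v\cdot w)(w\cdot u) = (w\cdot u)(w\cdot v)(v\cdot u)$ (the two sign flips cancel). I expect no serious obstacle: the only delicate point is this triple-sign bookkeeping, which determines whether the cubic term appears with the orientation matching the statement.
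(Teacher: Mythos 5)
Your argument is correct, and it takes a genuinely different route from the one in the paper. The paper first reduces to the case where $v$ and $w$ are linearly independent by a continuity argument (both sides are continuous in $u,v,w$ for fixed $l,n,m$), then works in the basis $\{v,w\}$, using the formula $\tr(f)=\bigl(f(v)\cdot w+v\cdot f(w)\bigr)/(v\cdot w)$ and explicitly computing the coefficients of $\tau_v^n\tau_w^m$ on that basis before applying $\tau_u^l$. Your proof instead expands $\Phi=(I+lA)(I+nB)(I+mC)$ with $A=\tau_u-I$, $B=\tau_v-I$, $C=\tau_w-I$, all nilpotent of rank one, and reads off the trace term by term via the identity $\tr\bigl(x\mapsto\phi(x)y\bigr)=\phi(y)$; the single-factor terms vanish because $u\cdot u=0$, the pair terms give $-(u\cdot v)^2$ etc., and the triple term gives $(u\cdot v)(v\cdot w)(w\cdot u)$, which equals $(w\cdot u)(w\cdot v)(v\cdot u)$ since the two sign flips from antisymmetry cancel. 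Each ingredient you use checks out (including $\tau_v^n=I+n(\tau_v-I)$ for all $n\in\Z$, which follows from $(\tau_v-I)^2=0$ and is stated in the paper just before the lemma). What your approach buys is that it is coordinate-free and needs no genericity or continuity reduction; what the paper's approach buys is that it stays entirely within the explicit $2\times 2$ computations already set up for the rest of Section 3. Both are elementary and equally rigorous.
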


\begin{proof}
Once we fix $l,n,m \in \mathbb{Z}$, the two sides of the equality are continuous functions 
of $u$, $v$ and $w$, therefore it is enough to prove that they agree when $v$ and $w$ 
are linearly independent. Using $\{v,w\}$ as a basis of $V$, we easily see that for 
any endomorphism $f$
\[
\tr(f)=\dfrac{f(v) \cdot w+v \cdot f(w)}{v \cdot w}
\]

If we write $\tau_v^n \tau_w^m(v)=av+bw$ and $\tau_v^n \tau_w^m(w)=cv+dw$, we have
\[
\begin{split}
\tau_u^l \tau_v^n \tau_w^m(v)=\tau_u^l (av+bw)=a(v+l(u \cdot v)u)+b(w+l(u \cdot w)u)\\
\tau_u^l \tau_v^n \tau_w^m(w)=\tau_u^l (cv+dw)=c(v+l(u \cdot v)u)+d(w+l(u \cdot w)u)
\end{split}
\]
so that
\[
\dfrac{\tau_u^l \tau_v^n \tau_w^m(v) \cdot w + v \cdot \tau_u^l \tau_v^n \tau_w^m(w)}{v \cdot w}=a+d+l\left(\dfrac{b}{v \cdot w}(u \cdot w)^2+\dfrac{a-d}{v \cdot w}(u \cdot v)(u \cdot w)-\dfrac{c}{v \cdot w}(u \cdot v)^2\right)
\]
Finally, we compute $a=1-mn(v \cdot w)^2$, $b=-m(v \cdot w)$, $c=n(v \cdot w)$ and $d=1$. 
Plugging in theses values we get the desired equality.
\end{proof}

\subsection{Diophantine equations}\label{ss:equations} 
As explained in~\cite[\S 1]{LP22-2}, a horizontal link 
$L = \bigcup_{i=1}^3 \ga_i\subset S^1\x S^2$ determines a factorization of the form
\[
F_L=(\tau_3^{\de_3},\tau_2^{\de_2},\tau_1^{\de_1}), 
\]
where $\tau_i$, $i=1,2,3$, is a positive Dehn twist along $\ga_i$ and $\de_i = \fr_T(\ga_i) - \fr(\ga_i)\in\{\pm 1\}$. 
The 2-handles produce a cobordism from $S^1 \times S^2$ to $S^1 \times S^2$ and, 
as observed in Section~\ref{s:chileq2},  
this happens if and only if $m_L(\la)\cdot \la =0$. 
Let $\mu,\la\subset T$ be a symplectic basis of oriented, simple closed curves in the standard Heegaard torus $T\subset S^1\x S^2$. 
From now on we fix arbitrary orientations of the $\ga_i$'s and we abuse notation 
by denoting with $\ga_i$ also the corresponding homology classes in $H_1(T;\Z)$.
We have $\ga_i = p_i\mu + q_i\la$, $i=1,2$, for some coprime $p_i, q_i\in\Z$. 
Let us introduce the variables $x_1,x_2,x_3$ by setting 
\begin{equation}\label{e:relations1}
(x_1,x_2,x_3) = (\ga_2\cdot\ga_3, \ga_1\cdot\ga_3,\ga_1\cdot\ga_2). 
\end{equation} 
Then, the trace formula of Lemma~\ref{l:traceformula} reads 
\begin{equation}\label{e:eq1}
\de_1 x_1^2 + \de_2 x_2^2 + \de_3 x_3^2 -  x_1 x_2 x_3 = \de_1\de_2\de_3 (2 - \tr(m_L)).
\end{equation} 
Moreover, observe that $m_L(\la)\cdot \la =0$ implies $m_L(\la) = \pm \la$, therefore 
$\det(m_L) = 1$ implies $\tr(m_L) = \pm 2$. Also, we have 
\begin{multline*}
m_L(\la)\cdot \la = \tau_3^{\de_3}\tau_2^{\de_2}\tau_1^{\de_1}(\la) = 
\tau_3^{\de_3}\tau_2^{\de_2}(\la + \de_1 p_1\ga_1) = 
\tau_3^{\de_3}(\la + \de_1 p_1\ga_1 + \de_2(p_2 - \de_1 p_1 x_3)\ga_2) \\ 
= \la + \de_1 p_1\ga_1 + (\de_2 p_2 - \de_1\de_2 p_1 x_3)\ga_2 + 
\de_3(p_3 - \de_1 p_1 x_2 - \de_2 p_2 x_1 + \de_1\de_2 p_1 x_1 x_3)\ga_3.
\end{multline*}
Therefore, the condition $m_L(\la)\cdot \la =0$ translates into 
\begin{equation}\label{e:eq2}
\de_1 p^2_1 + \de_2 p^2_2 + \de_3 p^2_3
- \de_1\de_2 p_1 p_2 x_3 - \de_1 \de_3 p_1 p_3 x_2 - \de_2 \de_3 p_2 p_3 x_1 + \de_1\de_2\de_3 p_1 p_3 x_1 x_3 =0.
\end{equation} 
In the following sections we study the following sets of solutions of Equation~\eqref{e:eq1}:
\[
S_a^{\de_1,\de_2,\de_3}:=
\left\{
(\de_1,\de_2,\de_3,x_1,x_2,x_3) \in \{ \pm 1\}^3 \times \mathbb{Z}^3 
\ | \ \de_1 x_1^2 + \de_2 x_2^2 + \de_2 x_3^2 -  x_1 x_2 x_3 = \de_1\de_2\de_3 a
\right\}
\]
where $a:=2-\tr(m_L)$ is either $0$ or $4$. 
We shall provide a proof of Theorem~\ref{t:chi=3} in the case $(h_1,h_2)=(1,3)$ by combining 
the information on the sets $S_a^{\de_1,\de_2,\de_3}$ with the constraint provided by Equation~\eqref{e:eq1}. 
Equation~\eqref{e:eq2} will be used in Section~\ref{s:disj-emb}.

\subsection{Factorizations and Hurwitz moves}\label{ss:hurwitz} 

In this section we show that to understand the sets of solutions $S_a^{\de_1,\de_2,\de_3}$ for all values of the $\de_i$'s it suffices to study the sets $S_a^{1,1,1}$ and $S_a^{-1,1,1}$ with $a \in \{0,4\}$. 
The latter sets will be analyzed in Sections~\ref{ss:111} and~\ref{ss:-111}. 

Let us start with the observation that changing the orientation of one of the curves $\ga_i$ changes the signs of two of the elements in the associated triple $(x_1,x_2,x_3)$, providing another solution of Equation~\eqref{e:eq1}. 
We define the {\em sign changes} as follows:
\[
(x_1,x_2,x_3)\mapsto (-x_1,-x_2,x_3),\quad (x_1,x_2,x_3)\mapsto (-x_1,x_2,-x_3),
\quad (x_1,x_2,x_3)\mapsto (x_1,-x_2,-x_3).
\label{e:sign-changes}
\]
Next, we observe that applying a Hurwitz move we get 
\[
m_L = 
\tau_{\ga_3}^{\de_3}\tau_{\ga_2}^{\de_2}\tau_{\ga_1}^{\de_1} = 
\tau_{\tau_3^{\de_3}(\ga_2)}^{\de_2}\tau_{\ga_3}^{\de_3}\tau_{\ga_1}^{\de_1} = 
\tau_{-\tau_3^{\de_3}(\ga_2)}^{\de_2}\tau_{\ga_3}^{\de_3}\tau_{\ga_1}^{\de_1}.
\]
To see the effect on a solution $(x_1,x_2,x_3)$ of Equation~\eqref{e:eq1} note that 
we are replacing $(\ga_3,\ga_2,\ga_1)$ with $(-\tau_{\ga_3}^{\de_3}(\ga_2),\ga_3,\ga_1)$
and since $\tau_3^{\de_3}(\ga_2) = \ga_2+\de_3(\ga_3\cdot\ga_2)\ga_3$
the associated triple becomes $(x_1,\de_3 x_1 x_2-x_3, x_2)$.
Setting $\hat x_3 := \de_3 x_1 x_2 - x_3$, the triple  
$(\de_1,\de_3,\de_2,x_1,\hat x_3,x_2)$ satisfies Equation~\eqref{e:eq1}. Similarly, the other Hurwitz 
moves lead to the solutions 
\[
(\de_1,\de_3,\de_2,x_1, x_3, \hat x_2),\quad 
(\de_2,\de_1,\de_3,x_2,\hat x_1,x_3)\quad\text{and}\quad 
(\de_2,\de_1,\de_3,\hat x_2,x_1,x_3), 
\]
where 
$\hat x_1 := \de_1 x_2 x_3 - x_1$ and $\hat x_2 := \de_2 x_1 x_3 - x_2$, which also 
satisfy Equation~\eqref{e:eq1}. Call {\em mutations} the following changes 
\[
\begin{split}
\mu_1\co (\de_1,\de_2,\de_3,x_1,x_2,x_3)\mapsto ( \de_1,\de_2,\de_3,\hat x_1, x_2, x_3),\\ 
\mu_2\co (\de_1,\de_2,\de_3,x_1,x_2,x_3)\mapsto (\de_1,\de_2,\de_3,x_1, \hat x_2, x_3),\\ 
\mu_3\co (\de_1,\de_2,\de_3,x_1,x_2,x_3)\mapsto (\de_1,\de_2,\de_3,x_1, x_2, \hat x_3)
\end{split}
\]
and, for any permutation $\si\in S_3$, set 
\[
\si(\bde,\bx) := (\de_{\si(1)},\de_{\si(2)},\de_{\si(3)},x_{\si(1)},x_{\si(2)},x_{\si(3)}).
\]
Then, we see that the Hurwitz moves induce the transformations
\[
(23)\circ\mu_3,\quad 
(23)\circ\mu_2,\quad 
(12)\circ\mu_1\quad\text{and}\quad
(12)\circ\mu_2. 
\]
In particular, each of the three mutations is realized, up to permutations, by at least one Hurwitz move.

\begin{defn}
	A solution $(\de_1,\de_2,\de_3,x_1,x_2,x_3)$ of Equation~\eqref{e:eq1} is {\em minimal} ({\em weakly minimal}, respectively) 
	if $|x_i|< |\hat x_i|$ ($|x_i|\leq |\hat x_i|$ respectively) for each $i=1,2,3$. A factorization $(\tau_{\ga_3}^{\de_3},\tau_{\ga_2}^{\de_2},\tau_{\ga_1}^{\de_1})$ is {\em minimal} or {\em weakly minimal} if so is the corresponding solution of Equation~\eqref{e:eq1} 
\end{defn}

Observe that any solution of Equation~\eqref{e:eq1} can be modified into a weakly minimal solution by a (possibly empty) sequence of mutations, each of which makes the quantity $|x_1|+|x_2|+|x_3|$ decrease strictly.

Now suppose that $(\bde,\bx) = (\de_1,\de_2,\de_3,x_1,x_2,x_3)$ is a solution of Equation~\eqref{e:eq1} and let 
\[
-(\bde,\bx)=(-\de_1,-\de_2,-\de_3,-x_1,-x_2,-x_3).
\]
If we represent $S^1\x S^2$ as a dotted unknot $U \subset S^3$, the union of $U$ with $L$ gives a Kirby 
diagram for $\hat X := B^4\cup_{\del_- X} X$. As already remarked in~\cite[Section~4.4]{LP22-2}, taking the mirror 
image of such a diagram with respect to a plane intersecting $U$ transversely in two points amounts to replacing the curves $\ga_i=p_i \mu + q_i \la$ with 
$\overline{\ga_i}=p_i \mu-q_i\la$ and changing the signs of their relative framings. 
The result is a diagram of a horizontal decomposition of $-X$. 
Observe that:
\begin{itemize}
\item 
If $(\bde,\bx)$ was associated to the $\ga_i$'s, to the $\overline{\ga_i}$'s is associated 
$-(\bde,\bx)$. 
\end{itemize}
Moreover, using the notation of Section~\ref{ss:hurwitz} it is easy to check 
that:
\begin{itemize} 
\item 
since 
\[
(23)\circ\mu_2\circ (12)\circ\mu_1= (123)\quad\text{and}\quad  
(12)\circ\mu_2\circ (23)\circ\mu_3 = (132), 
\]
for each cyclic permutation $\sigma$ of $(1,2,3)$, 
the transformation $(\bde,\bx)\mapsto\si(\bde,\bx)$ is realized by a composition of 
two Hurwitz moves on the factorization.
\end{itemize}
Summarizing, the observations above show that there are bijections 
\[
S_a^{\de_1,\de_2,\de_3} \longleftrightarrow S_a^{-\de_1,-\de_2,-\de_3}\qquad 
S_a^{\de_1,\de_2,\de_3} \longleftrightarrow S_a^{\de_{\sigma(1)},\de_{\sigma(2)},\de_{\sigma(3)}}
\]
with $\si$ any cyclic permutation. 
Moreover, the above bijections are induced by orientation-reversing diffeomorphisms 
in the first case and orientation-preserving diffeomorphisms in the second case. 
Therefore, in order to prove Theorem~\ref{t:chi=3} it will suffice to determine 
the oriented cobordisms underlying the minimal 
factorizations in the cases $(\de_1,\de_2,\de_3)=(1,1,1)$ and $(\de_1,\de_2,\de_3)=(-1,1,1)$. 
Since there are two possibilities for $a=2-\tr(m_L)$, we have four cases in total, 
which we treat separately below.

\subsection{Minimal solutions when $(\de_1,\de_2,\de_3)=(1,1,1)$}\label{ss:111}

In this section we determine the weakly minimal solution of the equation 
\begin{equation}\label{e:cohn} 
x^2+y^2+z^2-xyz=a, 
\end{equation}
for $a$ equal to either $0$ or $4$.

\begin{prop}\label{p:cohn} 
	Let $(x,y,z)$ be a solution of Equation~\eqref{e:cohn} with $|x|$, $|y|$, $|z|$ pairwise distinct. 
	Then, $(x,y,z)$ is not weakly minimal. Indeed, mutation at the greatest element in absolute
	value makes the quantity $|x|+|y|+|z|$ strictly smaller. Moreover, if $\min(|x|,|y|,|z|)>1$ then mutation at each of the other two elements makes the quantity $|x|+|y|+|z|$ strictly bigger. 
\end{prop}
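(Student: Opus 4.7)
The plan is to exploit the quadratic structure of Equation~\eqref{e:cohn} in each variable. Observe first that the equation is invariant under the three sign-changes $(x,y,z)\mapsto(-x,-y,z),\,(-x,y,-z),\,(x,-y,-z)$, which generate a Klein four-group on the solution set and preserve the sign of $xyz$. Since $\hat z=xy-z$ and $z$ are the two real roots of the quadratic $P(t)=t^2-xyt+(x^2+y^2-a)$, Vieta's formulas give $z\hat z=x^2+y^2-a$, hence $|\hat z|=|x^2+y^2-a|/|z|$ and, analogously, $|\hat y|=|x^2+z^2-a|/|y|$, $|\hat x|=|y^2+z^2-a|/|x|$. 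The three inequalities to be established therefore reduce to comparisons of the form $|u^2+v^2-a|$ versus $w^2$.

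The key auxiliary step is to show that pairwise distinctness forces $\min(|x|,|y|,|z|)\geq 3$. Writing $|x|$ for this minimum, I would rule out $|x|\in\{0,1,2\}$ by direct substitution: $|x|=0$ yields $y^2+z^2=a\leq 4$, which admits no solution with $|y|,|z|$ distinct and both nonzero; $|x|=1$ yields $y^2\mp yz+z^2=a-1$, whose left-hand side equals $((2y\mp z)^2+3z^2)/4\geq 3z^2/4\geq 27/4$, already larger than $a-1\leq 3$; and $|x|=2$ yields $(y\mp z)^2=a-4$, which has no real solution for $a=0$ and forces $|y|=|z|$ for $a=4$. Thus $|x|\geq 3,\,|y|\geq 4,\,|z|\geq 5$, so $xyz=x^2+y^2+z^2-a\geq 46>0$, and after applying a Klein four-group symmetry I may assume $0<x<y<z$. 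Next I evaluate $P$ at the middle root: $P(y)=y^2(2-x)+x^2-a$. Using $x\geq 3$ and $y\geq x+1$, one checks $y^2(x-2)\geq y^2\geq(x+1)^2>x^2\geq x^2-a$, hence $P(y)<0$. Since $P$ opens upward with roots $\hat z$ and $z$, this places $y$ strictly between them, forcing $\hat z<y<z$; the positivity $z\hat z=x^2+y^2-a>0$ then gives $\hat z>0$. Therefore $|\hat z|<|z|$, and mutation at the greatest element strictly decreases $|x|+|y|+|z|$.

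For the second part, the identities above reduce $|\hat y|>|y|$ and $|\hat x|>|x|$ to $x^2+z^2>y^2+a$ and $y^2+z^2>x^2+a$ respectively. Both follow routinely from $x\geq 3,\,y\geq 4,\,z\geq 5$: the first because $z^2-y^2\geq 2y+1\geq 9$ and $x^2\geq 9>a$, so $x^2+z^2-y^2\geq 18>a$; the second because $y^2\geq(x+1)^2>x^2+a$ already suffices. The only non-routine step is the base-case analysis establishing $|x|\geq 3$; everything afterwards reduces to elementary arithmetic supplied by the equation.
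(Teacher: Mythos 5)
Your proof is correct and takes essentially the same route as the paper's: both rule out $\min(|x|,|y|,|z|)\le 2$ by the same substitutions (the identities $(y\mp z)^2=a-4$ and $y^2\mp yz+z^2=a-1$ with the $\tfrac34$-bound), reduce to positive entries via sign changes, and conclude $|\hat z|<|z|$ by checking that the quadratic in the largest variable is negative at the middle one. The only cosmetic difference is that you package all three comparisons through the Vieta products $z\hat z=x^2+y^2-a$, etc., whereas the paper handles the second part with triangle-inequality estimates.
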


\begin{proof} 
	Up to permuting the variables we may assume:
	$$
	|x| > |y| > |z|.
	$$
	Note that, by~\eqref{e:cohn}, this implies $|z|>0$ (otherwise $x^2+y^2=a \in \{0,4 \}$, which would imply $y=0=z$), therefore we have $xyz\neq 0$.   
	We claim that $|\hat x|<|x|$. After possibly applying sign changes we may assume that 
	$x,y>0$, and since $xyz = x^2+y^2+z^2-a >0$, we have $z>0$ as well. Also, $x\hat{x} = y^2+z^2-a>0$, 
	therefore $\hat{x}>0$. Replacing $x$ with $y$ in the equation gives 
	\[
	2y^2 + z^2 - y^2 z-a < y^2(3-z),
	\]
	therefore if $z\geq 3$ then $y$ is between $x$ and $\hat{x}$, and since $y<x$ we must have $\hat{x}<x$, 
	as claimed. On the other hand, $z=2$ is impossible because it implies  
	\[
	(x-y)^2=a-4,
	\]
    which has no solutions if $a=0$ and forces $x=y$ if $a=4$. Finally, $z=1$ gives:  
    \[
    x^2-xy+y^2=a-1 \le 3
    \]
    but since $x>y>z>0$ we have $x >2$, so that
    \[
    x^2-xy+y^2=\dfrac{3}{4} x^2+ \left( \dfrac{x}{2} -y \right)^2 \ge \dfrac{3}{4} x^2>3
    \]
	This proves the first part of the statement. To prove the second part it suffices to show that if $|x|>|y|>|z|\geq 2$ then $|y|<|\hat{y}|$ and $|z|<|\hat{z}|$. We have 
	$$
	2|y| < |xy| \leq |xy -z| + |z| < |\hat{z}| + |y|,
	$$
	therefore $|z|<|y|<|\hat z|$. Similarly, 
	$$
	2|x|\leq |xz| \leq |xz - y| + |y| < |\hat{y}| + |x|,
	$$
	therefore $|y|<|x|<|\hat y|$. 
\end{proof}

\begin{lemma}\label{l:cohn}
	A weakly minimal solution $(x,y,z)$ of Equation~\eqref{e:cohn} is equal, up to sign changes and permutations, to:
	\begin{itemize}
	    \item $(0,0,0)$ or $(3,3,3)$ if $a=0$;
	    \item $(2,0,0)$, $(-1,1,1)$ or $(2,t,t)$ for some $t \ge 2$ if $a=4$.
	\end{itemize}
\end{lemma}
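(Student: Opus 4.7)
The plan is to combine Proposition~\ref{p:cohn} with a one-variable Diophantine analysis. Since $(x,y,z)$ is weakly minimal, Proposition~\ref{p:cohn} forces at least two of $|x|,|y|,|z|$ to coincide. Weak minimality and Equation~\eqref{e:cohn} are both invariant under permutations of the coordinates, so I can assume $|y|=|z|$. A sign change of the form $(x,y,z)\mapsto(-x,-y,z)$ converts the case $y=-z$ into the case $y=z$, so I may further normalize to $y=z$. After this reduction the equation becomes $x^2+y^2(2-x)=a$, and the mutation formulas simplify to $\hat x = y^2-x$ and $\hat y=\hat z=y(x-1)$.

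With this normalization I would first isolate $y=0$, where the equation reduces to $x^2=a$ and immediately yields $(0,0,0)$ for $a=0$ and $(\pm 2,0,0)\sim(2,0,0)$ for $a=4$. When $y\neq 0$, the condition $|y|\le|y(x-1)|$ forces $|x-1|\ge 1$, so either $x\ge 2$ or $x\le 0$, splitting the remaining analysis into two sub-cases.

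For $x\ge 2$, the condition $|x|\le|y^2-x|$ is equivalent to $y^2\ge 2x$, and I rewrite the equation as $y^2(x-2)=x^2-a$. The case $x=2$ forces $a=4$, makes the equation tautological, and leaves the constraint $|y|\ge 2$, which yields the family $(2,t,t)$ with $t\ge 2$. For $x>2$, when $a=0$ I get $y^2=x^2/(x-2)$; combined with $y^2\ge 2x$ this restricts $x\le 4$, and checking $x\in\{3,4\}$ produces only $(3,3,3)$. When $a=4$, the same manipulation gives $y^2=x+2$, which is incompatible with $y^2\ge 2x$ for $x>2$.

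For $x\le 0$, the condition $|x|\le|y^2-x|$ is automatic (since $y^2-x\ge y^2\ge 1$), and the equation rearranges to $y^2(2-x)=a-x^2$. When $a=0$ this forces $y=0$, contradicting $y\neq 0$; when $a=4$ it gives $y^2=2+x$, whose only integer solution with $y\neq 0$ and $x\le 0$ is $x=-1$, producing $(-1,1,1)$. Assembling the outputs of all sub-cases gives exactly the list in the statement. The only real subtlety lies in correctly translating each weak-minimality inequality into a sign-sensitive constraint on $x$ and $y$; once that bookkeeping is in place the argument is purely elementary.
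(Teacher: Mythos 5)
Your proof is correct and follows essentially the same route as the paper's: both invoke Proposition~\ref{p:cohn} to reduce to a solution with two equal coordinates (normalized via permutations and sign changes) and then carry out an elementary Diophantine analysis of $x^2+y^2(2-x)=a$. The only, harmless, difference is that you feed the weak-minimality inequalities $y^2\ge 2x$ and $|x-1|\ge 1$ into the analysis up front to bound the search, whereas the paper first solves the reduced equations outright (e.g.\ via the substitution $x=cy$ when $a=0$) and discards the non-weakly-minimal solutions afterwards.
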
 

\begin{proof}
By Proposition~\ref{p:cohn}, $|x|,|y|,|z|$ are not pairwise distinct. Therefore we can assume, possibly after permuting the variables and changing two signs, that $y=z \ge 0$. Then, if $a=0$ we have 
\[
x^2 = (x-2)y^2.
\]
If $y=0$ we get the solution $(0,0,0)$, otherwise we must have $x = cy$, with $c^2 =x - 2$ for some 
$c\in\Z$. Therefore we have $c^2-cy+2=0$, which implies $c\ |\ 2$. Moreover, $y>0$ implies $c>0$, 
so that $c \in \{1,2\}$. These two values of $c$ lead to the solutions $(3,3,3)$ and $(6,3,3)$, 
and the conclusion when $a=0$ follows easily. If $a=4$ we have 
\[
x^2-4=(x-2)y^2,
\]
which implies either $x=2$ or $x=y^2-2$. It is easy to check that the solution $(2,y,y)$ can only mutate to itself and to $(y^2-2,y,y)$, therefore it is weakly minimal if and only if $|y^2-2| \ge 2$, which holds for all $y \neq 1$; on the other hand, a necessary condition for $(y^2-2,y,y)$ to be weakly minimal is $|y^2-2|\le 2$, i.e. $y \in \{0,1,2\}$. For $y \in \{0,2\}$, up to sign changes we get the 
weakly minimal solutions $(2,0,0)$ and $(2,2,2)$, while for $y=1$ we get $(-1,1,1)$.
\end{proof}

\subsection{Underlying $4$-manifolds when $(\de_1,\de_2,\de_3)=(1,1,1)$ and $a=0$}\label{ss:111-a=0}

The minimal solution $(0,0,0)$ is realized only by a triple $(\ga_1,\ga_2,\ga_3)$ with $\ga_1=\ga_2=\ga_3$, 
and since $m_L = \tau_\ga^3$ must fix $\la$, we have $\ga_i=\la$ for each $i$ and the factorization 
$\tau_{\la}\tau_{\la}\tau_{\la} = \tau_{\la}^3$. It is easy to check from the associated Kirby diagram that 
$\hat X \cong 3 \overline{\CP}^2 \# S^1\x S^3$. 

The following computation shows that the minimal solution $(3,3,3)$ of 
Lemma~\ref{l:cohn} is realized by a triple $(\ga_1,\ga_2,\ga_3)$ with 
$\ga_3 = \mu+2\la$, $\ga_2=2\mu+\la$ and $\ga_1=\mu-\la$, corresponding to 
the factorization $\tau_{\ga_3}\tau_{\ga_2}\tau_{\ga_1} = \tau_\la^{-9}$. Since $\ga_3$ 
is nonseparating there is another curve $\widetilde\ga\subset T^2$ with $\ga_3 \cdot\widetilde\ga = 1$. 
The homology classes of $\ga_3$ and $\widetilde\ga$ generate $H_1(T^2;\Z)$, therefore we 
can express $\ga_1$ and $\ga_2$ in these coordinates. From 
$\ga_1 \cdot \ga_3=\ga_2 \cdot \ga_3=3$ we get 
$\ga_1=a \ga_3-3\widetilde\ga$ and $\ga_2=b \ga_3-3\widetilde\ga$ for some integers $a$ and $b$, 
both coprime with $3$. Moreover, $\ga_1 \cdot \ga_2=3$ implies $b=a+1$, so that 
the remainders of $a$ and $b$ modulo $3$ are forced to be $1$ and $2$ respectively. 
We can then write
\[
\ga_1=(3c+1)\ga_3-3 \de=\ga_3-3\bar\ga\quad\text{and}\quad
\ga_2=(3c+2)\ga_3-3 \de=2\ga_3-3\bar\ga 
\]
for some $c\in\Z$, where $\bar\ga:=\widetilde\ga-c\ga_3$ is another curve satisfying $\ga_3\cdot\bar\ga=1$. Now it is straightforward to check that $m_L(\bar\ga)=\bar\ga$ and $m_L(\ga_3)=\ga_3+9\bar\ga$, so that $\bar\ga=\pm \la$. 
Without loss of generality, up to reversing $\bar\ga$ and the 
$\ga_i$'s we may assume $\bar\ga=\la$. Then, $m_L=\tau_\la^{-9}$. In particular, since $\ga_3 \cdot \la=1$ we can write 
$\ga_3=\mu+k \la$ for some $k\in\Z$. Any choice of $k$ determines $\ga_1$,$\ga_2$ and $\ga_3$, hence 
a Kirby diagram for $\hat X$. Twisting the $1-$handle as in 
Section~\ref{s:chileq2} transforms each $\ga_i$ 
through a power of $\tau_\la$ without changing the underlying $4$-manifold, therefore it suffices to identify $\hat X$ for a single value of $k$. Choosing $k=2$ we get 
\[
\ga_1 = \mu-\la,\quad\ga_2 = 2\mu+\la\quad\text{and}\quad\ga_3 = \mu +2\la.
\]
The resulting Kirby diagram is shown in the first picture from the left in Figure~\ref{f:fig1}, where the presence of a $3$- and a $4$-handle is understood. 
The framings $\fr(\ga_i)$ are obtained as follows. If $\ga_i = p_i \mu + q_i\la$, 
the Seifert framing induced by the Heegaard torus $T$ is $\fr_T(\ga_i) = p_i q_i$. Indeed, 
since this framing is the linking number of $\ga_i$ with its push-off in the 
direction of the negative normal to $T$, we have 
\[
\fr_T(\ga_i) = \lk(\ga_i,p_i\mu^- + q_i\la^-) = \lk(\ga_i, q_i\la^-) = p_i q_i.
\]
Then, since $(\de_1,\de_2,\de_3)=(1,1,1)$, we obtain 
$\fr(\ga_i) = \fr_T(\ga_i) - \de_i = p_i q_i -1$ for each $i=1,2,3$.
\begin{figure}[ht]
\labellist
\hair 2pt
\pinlabel $\ga_1$ at 145 20
\pinlabel $\ga_3$ at 145 90
\pinlabel $\ga_2$ at 18 10
\pinlabel $-2$ at 42 50
\pinlabel $1$ at 74 40
\pinlabel $1$ at 79 75
\pinlabel $-2$ at 231 57
\pinlabel $0$ at 263 53
\pinlabel $1$ at 270 78
\pinlabel $1$ at 407 28
\pinlabel $0$ at 453 45
\pinlabel $0$ at 403 75
\endlabellist
\centering
\includegraphics[width=12cm]{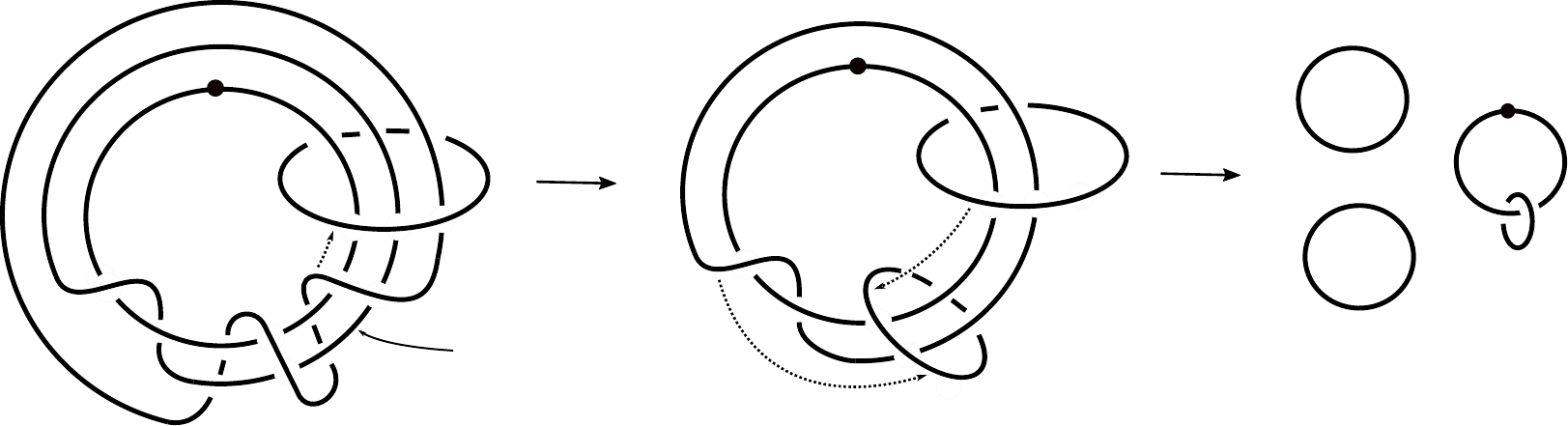}
\caption{Proof that $\hat X\cong\CP^2$ when $(\de_1,\de_2,\de_3)=(1,1,1)$, $a=0$}
\label{f:fig1}
\end{figure}
After the handle slide suggested by the dotted arrow one obtains the central picture of Figure~\ref{f:fig1}. 
Two further handle slides indicated the dotted arrows lead to the picture on the right, where the 
$0$-framed $2$-handle cancels with the $3$-handle. This shows that $\hat X\cong\CP^2$. 

\subsection{Underlying $4$-manifolds when $(\de_1,\de_2,\de_3)=(1,1,1)$ and $a=4$}\label{ss:111-a=4} 

If the solution $(2,0,0)$ was realized by curves $\ga_1$, $\ga_2$ and $\ga_3$ 
then we would have $\ga_1\cdot\ga_2=\ga_1\cdot\ga_3=0$ and therefore 
$\ga_1=\ga_2=\ga_3$ up to signs, which is incompatible with $\ga_2\cdot\ga_3=2$. 
Thus, $(2,0,0)$ cannot be realized.

Now consider the solution $(-1,1,1)$. Since $\ga_1\cdot\ga_2=1$, we may 
take $\ga_1$ and $\ga_2$ as a symplectic basis and write $\ga_3=a\ga_1+b\ga_2$.
Then, $\ga_1\cdot\ga_3=1$ and $\ga_2\cdot\ga_3=-1$ imply $\ga_3=\ga_1+\ga_2$, 
and the monodromy is $m_L = \tau_{\ga_1+\ga_2}\tau_{\ga_2}\tau_{\ga_1}$. But it is  
easy to check that $m_L(\ga_1)=-\ga_1-3\ga_2$ and $m_L(\ga_2)=-\ga_2$. Therefore 
$\ga_2=\pm\la$ and $\ga_1 = \pm\mu + k\la$ for some $k\in\Z$. Up to applying a $1$-handle twist 
which does not change the underlying $4$-manifold as in Section~\ref{ss:111-a=0}, 
we may assume $\ga_1=\pm\mu$, $\ga_2 =\pm\la$ and $\ga_3 = \pm(\mu+\la)$. Then, 
$m_L = -\tau_\la^{-3}$ and the resulting Kirby diagram is given by the first picture from the left in Figure~\ref{f:fig2}, where the presence of a $3$- and a $4$-handle is understood.   
\begin{figure}[ht]
	\labellist
	\hair 2pt
	\pinlabel $-1$ at 10 10
	\pinlabel $-1$ at 55 37
	\pinlabel $\ga_1$ at 85 20
	\pinlabel $\ga_2$ at 110 20
	\pinlabel $\ga_3$ at 127 67
	\pinlabel $0$ at 72 55
	\pinlabel $-1$ at 180 10
	\pinlabel $0$ at 204 70
	\pinlabel $-1$ at 225 25
	\pinlabel $0$ at 308 55
	\pinlabel $-1$ at 280 33
	\pinlabel $-1$ at 288 15
	\pinlabel $\cong$ at 330 50
	\pinlabel $-1$ at 341 68
	\pinlabel $0$ at 345 40
	\endlabellist
	\centering
	\includegraphics[width=12cm]{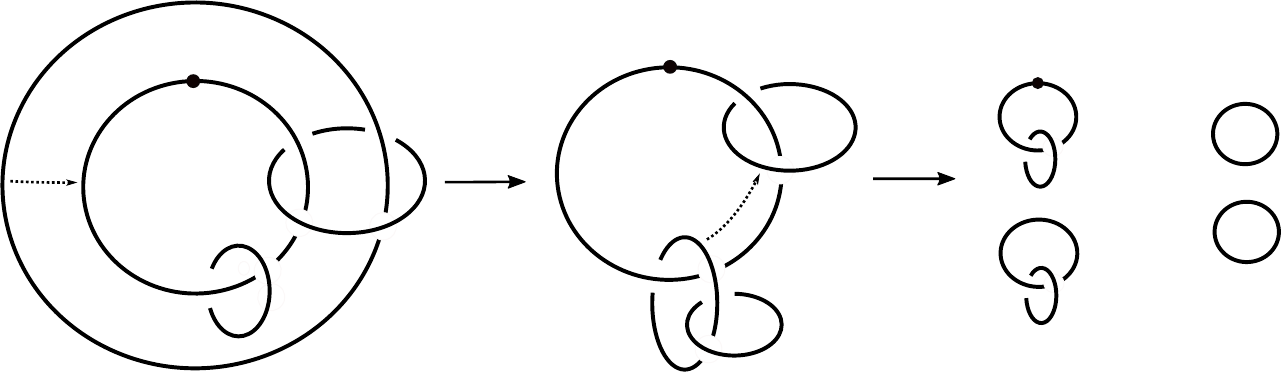}
	\caption{Proof that $\hat X\cong\CP^2$ when $(\de_1,\de_2,\de_3)=(1,1,1)$ and $a=4$}
	\label{f:fig2}
\end{figure}
The indicated handle slides show that $\hat X\cong\overline{\CP}^2$.

Finally, consider the solutions $(2,t,t)$ with $t\geq 2$. 
Suppose that $\widetilde\ga\subset T$ is a simple closed curve such that $(\ga_1,\widetilde\ga)$ is a symplectic basis. 
Then, $\ga_2=a\ga_1+t\widetilde\ga$, $\ga_3 = b\ga_1+t\widetilde\ga$ for some $a,b\in\Z$ coprime with $t$, and $2 = \ga_2\cdot\ga_3 = t(a-b)$ implies $t=2$ and $a=b+1$. 
But $b+1$ and $b$ cannot be both odd, so $(2,t,t)$ is never realized.

\subsection{Minimal solutions when $(\de_1,\de_2,\de_3)=(-1,1,1)$}\label{ss:-111}

In this section we determine the weakly minimal solutions of the equation  
\begin{equation}\label{e:negcohn}
x^2 + xyz -a = y^2 + z^2,
\end{equation}
where $a \in \{0,4\}$.

\begin{prop}\label{p:negcohn}
	Let $(x,y,z)$ be a solution of Equation~\eqref{e:negcohn} with $|x|$, $|y|$, $|z|$ pairwise distinct.  
	Then, $(x,y,z)$ is not weakly minimal. Indeed, mutation at the greatest element in absolute value makes the quantity $|x|+|y|+|z|$ strictly smaller. Moreover, if $\min(|x|,|y|,|z|)>1$ then mutation at the other 
	two elements makes the quantity $|x|+|y|+|z|$ strictly bigger. 
\end{prop}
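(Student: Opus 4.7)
The plan is to mirror the proof of Proposition~\ref{p:cohn}, using the Vieta jumping identities
\[
x\hat x = -(y^2+z^2+a),\quad y\hat y = z^2+a-x^2,\quad z\hat z = y^2+a-x^2
\]
that follow from Equation~\eqref{e:negcohn} and the definitions of $\hat x,\hat y,\hat z$. Because Equation~\eqref{e:negcohn} is symmetric in $y$ and $z$ but treats $x$ asymmetrically, the case analysis on which of $|x|,|y|,|z|$ is the strict maximum splits naturally into case (A), in which $|x|$ is the strict maximum, and case (B), in which $|y|$ is the strict maximum; the case ``$|z|$ is the strict maximum'' reduces to (B) by the $y\leftrightarrow z$ symmetry.

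In case (B) the equation itself forces $xyz=y^2+z^2+a-x^2\ge y^2-x^2>0$, so $xyz>0$ is automatic. Using $|x|,|z|\le|y|-1$, I bound $|y\hat y|=|z^2+a-x^2|\le (|y|-1)^2+a$, which is strictly less than $y^2$ as soon as $a<2|y|-1$. This is automatic for $a=0$ since distinctness forces $|y|\ge 2$, and for $a=4$ whenever $|y|\ge 3$; the residual case $a=4$, $|y|=2$, $\{|x|,|z|\}=\{0,1\}$ is ruled out by a direct verification that Equation~\eqref{e:negcohn} has no integer solution there.

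In case (A) the identity $|x||\hat x|=y^2+z^2+a=x^2+xyz$ shows $|\hat x|<|x|$ iff $xyz<0$, so the whole first claim reduces to proving $xyz<0$ whenever $|x|$ is the strict maximum. The subcase $xyz=0$ forces one of $y,z$ to vanish (since $|x|$ is the max and hence nonzero) and reduces Equation~\eqref{e:negcohn} to either $x^2-y^2=a$ or $x^2-z^2=a$, neither of which has a solution with pairwise distinct absolute values for $a\in\{0,4\}$. The subcase $xyz>0$, which I expect to be the main obstacle, yields the Diophantine constraint $k^2+kmn=m^2+n^2+a$ with $k=|x|>m=|y|>n=|z|\ge 1$; viewing it as a quadratic in $k$ and imposing $k>m$ rewrites as $a>n(m^2-n)$, and since the latter is minimized at $(m,n)=(2,1)$ with value $3$, this forces $(m,n,a)=(2,1,4)$ and then $k=-1+\sqrt{10}$, which is not an integer. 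Hence $xyz<0$, and $|x||\hat x|=x^2+xyz<x^2$ gives $|\hat x|<|x|$.

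For the second part, assume $\min(|x|,|y|,|z|)>1$. In case (A), using $xyz<0$ from above, both inequalities $|y\hat y|>y^2$ and $|z\hat z|>z^2$ amount via the equation to $x^2>y^2+z^2+a$, i.e.~to $-xyz>0$, which holds; the absolute values resolve correctly because $|x|\ge 3$ and $|y|,|z|\ge 2$ make $x^2-z^2-a$ and $x^2-y^2-a$ both positive. In case (B), $|x\hat x|=x^2+xyz>x^2$ is immediate from $xyz>0$, while $|z\hat z|=y^2+a-x^2>z^2$ reduces via the equation to $xyz>2z^2$, which follows from $|x|\ge 2$ and $|y|>|z|$. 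This shows that mutation at each of the two non-maximal elements strictly increases $|x|+|y|+|z|$.
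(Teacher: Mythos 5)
Your proposal is correct and follows essentially the same strategy as the paper's proof: a case split on which of $|x|,|y|,|z|$ is largest (with the $y\leftrightarrow z$ symmetry collapsing two cases), combined with the products $x\hat x=-(y^2+z^2+a)$, $y\hat y=z^2+a-x^2$, $z\hat z=y^2+a-x^2$ and a determination of the sign of $xyz$ in each case. The only real divergence is in the $|x|$-maximal case, where you exclude $xyz\geq 0$ via the discriminant of $k^2+kmn-(m^2+n^2+a)=0$ viewed as a quadratic in $k$, whereas the paper normalizes $x,y>0$ and derives a direct inequality contradiction from $z>0$; both routes are valid.
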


\begin{proof} 
Note that if $(x,y,z)$ is a solution then so is $(x,z,y)$. Therefore, 
up to swapping $y$ and $z$ we may assume $|y|>|z|$. Moreover, it is easy to check that $xyz\neq 0$, 
because otherwise at least two among $x,y,z$ would be zero. We split the proof into the three cases
$|x| > |y| > |z|$, $|y| > |x| > |z|$ and $|y| > |z| > |x|$.

\smallskip\noindent
{\bf First case:} $|x| > |y| > |z|$. We claim that $|\hat x|<|x|$. Up to sign changes it suffices to assume $x,y>0$. This implies $z<0$, because $z>0$ gives the contradiction
\[
y^2+z^2 = x^2+xyz-a > y^2 + 2xz-a \ge y^2+2(z+2)z-a> y^2+z^2.
\]
We have $x(yz+x)=y^2+z^2+a>0$, therefore $\hat x = -yz - x < 0$, which implies, since $-yz>0$ and $-x<0$, $|\hat x|<|x|$. 
This shows that mutation at the greatest element in absolute value makes the quantity $|x|+|y|+|z|$ strictly smaller, in particular that $(x,y,z)$ is not weakly minimal. 
To prove the last part of the statement suppose $|z|>1$. 
Then, $y>2$ and  
\begin{equation}\label{e:zhatineq}
2|z| < xy < xy-z + |z| = \hat z + |z|,
\end{equation}
therefore $|z| < \hat z$.
\color{black}
Similarly, we have 
\[
2 x < |xz| \leq |xz - y| + |y| < |\hat y| + x,
\]
therefore $y < x < |\hat y|$. This concludes the proof of the statement in the first case. 

Before tackling the other cases we observe that up to sign changes we may 
assume $y>0$. Moreover, if $xz<0$ we have  
\[
y^2 -xyz + z^2 > x^2 -xyz + z^2 > x^2 \ge x^2-a,
\]
which is a contradiction. Hence, after a further sign change we may assume 
$x>0$ and $z>0$ as well. Therefore, from now on we assume $x,y,z>0$. 

\smallskip\noindent
{\bf Second case:} $|y|>|x|>|z|$. 
Note that $y\hat y = z^2-x^2+a \geq 0$ forces $x=2$, $z=1$ and $a=4$, in which case   
$y\hat y = 1$, which is impossible because $y>z\geq 1$. Therefore we have 
$y\hat y<0$, which implies $\hat y = xz-y<0$. 
It follows that $0 < xz < y$, hence $|\hat y| < |y|$. 
To prove the last part of the statement suppose $|z|>1$. Then, 
$|z|<|\hat z|$ follows as before from~\eqref{e:zhatineq}. 
Finally, we have 
$$
2|y| \leq |-yz - x| + |x| < |\hat x| + |y|,
$$	
hence $|x|<|y|<|\hat x|$. This proves the statement in the second case. 

	\smallskip\noindent
	{\bf Third case:} $|y|>|z|>|x|$.
Since $y\hat y = z^2-x^2+a>0$ we have $\hat y = xz-y>0$. If $\hat y \ge y$ then $xz \ge 2y$ 
and, using that $z\geq x+1\geq 2$, we get the contradictory inequalities
\[
y^2 + z^2 = x^2 + xyz-a > 2y^2-a \geq y^2 + (|z|+1)^2 - a > y^2 + z^2.
\]
Therefore $|y|=y > \hat y = |\hat y|$. 
To prove the last part of the statement observe that 
\[
2|x| < |yz| \leq |-yz-x|+|x| = |\hat x| + |x|, 
\]
hence $|x|<|\hat x|$. Using the extra assumption $|x|\geq 2$ we have  
$$
2|y| \leq |xy - z| + |z| < |\hat z| + |y|,
$$
hence $|z|<|y|<|\hat z|$. This proves the statement in the third case and concludes the proof of the proposition. 
\end{proof}
\color{black}

\begin{lemma}\label{l:negcohn}
	Let $(x,y,z)$ be a weakly minimal solution of Equation~\eqref{e:negcohn}. 
	Then, $(|x|,|y|,|z|)$ is of the form 
\begin{itemize}
    \item $(t,t,0)$ or $(t,0,t)$ for some $t\geq 0$ if $a=0$;
    \item $(2,t,t)$ for some $t\geq 0$ if $a=4$.
\end{itemize}
\end{lemma}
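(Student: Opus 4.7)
The plan is to combine Proposition~\ref{p:negcohn} with a short, structured case analysis of the residual degenerate solutions. By the proposition, any weakly minimal solution must have $|x|,|y|,|z|$ not pairwise distinct, so at least two of the absolute values coincide; I would then exploit the symmetries of Equation~\eqref{e:negcohn} to collapse the number of sub-cases before solving each one by direct substitution.

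First I would observe that Equation~\eqref{e:negcohn} is invariant under the exchange $y\leftrightarrow z$ and under each of the three sign changes introduced in Section~\ref{ss:hurwitz}, and that both kinds of operation preserve weak minimality (the mutants $\hat x,\hat y,\hat z$ just undergo the corresponding sign changes and swaps). A short verification then shows that any equality $y=-z$ becomes $y=z$ after the sign change $(x,y,z)\mapsto(-x,-y,z)$, and that any equality of the form $x=\pm y$ or $x=\pm z$ can be converted to $x=y$ by combining a sign change with the $y\leftrightarrow z$ swap. Hence, up to these symmetries, every weakly minimal solution satisfies either $y=z$ or $x=y$.

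Next I would substitute each equality into Equation~\eqref{e:negcohn} and solve directly. The case $y=z$ reduces to $x^2+y^2(x-2)=a$. For $a=0$ a divisibility argument (if $y\neq 0$ write $x=ky$ and reduce to $k(k+y)=2$) produces only the candidates $(\pm 1,\pm 1,\pm 1)$ and $(\mp 2,\mp 1,\mp 1)$, each of which I expect to rule out by exhibiting a single mutant of strictly smaller absolute value, leaving only the degenerate $(0,0,0)$, of form $(t,t,0)$ with $t=0$. For $a=4$ the equation factors as $(x-2)(x+y^2+2)=0$: the branch $x=2$ supplies the entire family $(2,y,y)$, and on the branch $x=-y^2-2$ one computes $\hat x=2$, so weak minimality forces $y=0$, yielding $(-2,0,0)$, again of form $(2,t,t)$. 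The case $x=y$ collapses to $z(x^2-z)=a$. For $a=0$, either $z=0$, producing the family $(x,x,0)$, or $z=x^2$, which is killed by $\hat z=xy-z=0$. For $a=4$ the finitely many integer factorizations $(z,x^2-z)$ of $4$ with $x^2$ a perfect square leave only $(\pm 2,\pm 2,2)$, of form $(2,2,2)=(2,t,t)$.

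The main obstacle is bookkeeping rather than depth: the sign-change and swap reductions must be tracked carefully to ensure that no weakly minimal solution is lost, and in each sub-case one has to verify weak minimality by inspecting all three mutants rather than just the obvious one. Collecting the surviving solutions and restoring the $y\leftrightarrow z$ symmetry (which converts $(t,t,0)$ into $(t,0,t)$ in the $a=0$ list) then produces exactly the two lists claimed in the statement.
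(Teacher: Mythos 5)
Your proposal is correct and follows essentially the same route as the paper's proof: invoke Proposition~\ref{p:negcohn} to force two of $|x|,|y|,|z|$ to coincide, use the sign changes and the $y\leftrightarrow z$ symmetry to reduce to the cases $y=z$ and $x=y$, and then solve each substituted equation by the same elementary divisibility/factorization arguments (your $k(k+y)=2$ is the paper's $c^2=2-cy$ with $c\mid 2$, and your branches $x=2$ versus $x=-y^2-2$ and $z=0$ versus $z=x^2$ are exactly the paper's). The only difference is that you spell out explicitly the weak-minimality checks (e.g.\ $\hat x=2$ on the branch $x=-y^2-2$, $\hat z=0$ on the branch $z=x^2$) that the paper leaves implicit, which is harmless.
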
 

\begin{proof}
By Proposition~\ref{p:negcohn}, $|x|,|y|,|z|$ are not pairwise distinct: therefore we can assume, possibly after changing two signs, that two variables are equal and nonnegative; also, since the equation is symmetric in $y$ and $z$, it suffices to discuss the cases $x=y$ and $y=z$.
We have the following possibilities:
\begin{itemize}
\item 
$a=0$ and $y=z\ge 0$: we have $x^2=(2-x)y^2$.
If $y=0$ we get the solution $(0,0,0)$, otherwise we must have $x=cy$ with $c^2=2-x=2-cy$, so that 
$c\ |\ 2$: the values of $c$ for which $y>0$ are $1$ and $-2$, and the corresponding solutions are $(1,1,1)$ and $(-2,1,1)$, none of which is weakly minimal.
\item 
$a=0$ and $x=y \ge 0$: we have $x^2z=z^2$, so that either $z=0$ or $z=x^2$, and the weakly minimal solutions are those of the form $(t,t,0)$ for some $t \ge 0$. Because of the symmetry in $y$ and $z$ this also gives 
the weakly minimal solutions $(t,0,t)$, $t\ge 0$. 
\item 
$a=4$ and $y=z \ge 0$: we have $x^2-4=(2-x)y^2$, so that either $x=2$ or $x=-y^2-2$, and the weakly minimal solutions are those of the form $(2,t,t)$ for some $t \ge 0$.
\item 
$a=4$ and $x=y \ge 0$: we have $x^2z-4=z^2$, which implies $z>0$ and $z\ |\ 4$. The only case where $x$ can be an integer is $z=2$, which gives the weakly minimal solution $(2,2,2)$.
\end{itemize}
\end{proof}

\subsection{Underlying $4$-manifolds when 
$(\de_1,\de_2,\de_3)=(-1,1,1)$ and $a=0$}\label{ss:-111-a=0} 

When $t=0$ the minimal solution $(t,t,0)=(t,0,t)=(0,0,0)$ of Lemma~\ref{l:negcohn}
comes from the trivial factorization $\tau_\la  = \tau_\la \tau_\la \tau_\la^{-1}$, 
because $0 = \ga_1\cdot\ga_2=\ga_2\cdot\ga_3=\ga_1\cdot\ga_3$ and $m_L(\la)=\pm\la$ 
implies $\tau_{\ga_i}=\tau_\la$ for each $i$. It is easy to check that in this case 
$\hat X \cong \CP^2 \# 2\overline{\CP}^2 \# S^1\x S^3$. When $t\neq 0$ the same kind of  
analysis shows that the solution $(t,t,0)$ comes from the factorization  
$\tau_\la = \tau_\la \tau_\ga\tau_\ga^{-1}$. Similarly, $(t,0,t)$ 
comes from a factorization of the form 
\[
\tau_\la = \tau_\ga\tau_{\ga'}\tau_\ga^{-1} = 
\tau_{\tau_{\ga}(\ga')}\tau_\ga\tau_\ga^{-1} = 
\tau_\la\tau_\ga\tau_\ga^{-1}.
\]
Since Hurwitz moves on the factorization do not change the underlying $4$-manifold, 
it suffices to analyze the former case. We may assume $\ga=t \mu + q \la$ 
for some integer $q$ coprime with $t$, with $q=0$ if $t=1$. 
Now we can proceed as in the case $n=0$ in Section~\ref{s:chileq2}. In fact, as in that 
case the components $\ga_1=\ga_2=\ga$ of $L$ cobound an annulus $A$ of 
the form $\ga\x [0,1]$ with framings with respect to $A$ opposite to each other and equal to $1$ 
in absolute value. Sliding $\ga_1$ over $\ga_2$ changes $L$ into the framed link 
consisting of $\ga_2$ and a $0$-framed meridian of $\ga_2$. 
Applying~\cite[Proposition~3.1]{LP22-2} (and the preceding discussion in the same paper) it follows that $\hat X\cong\overline{\CP}^2\# B_{t,q}\cup -B_{t,q}$. 
Since $B_{t,q}\cup -B_{t,q}$ is not simply connected when $t>1$ while $B_{1,0}\cup -B_{1,0}\cong S^4$, we get $\hat X\cong\overline\CP^2$ precisely when the factorization is $\tau_\la\tau_\mu\tau_\mu^{-1}$. 

\subsection{Underlying $4$-manifolds when 
$(\de_1,\de_2,\de_3)=(-1,1,1)$ and $a=4$}\label{ss:-111-a=4} 
The argument at the beginning of Section~\ref{ss:111-a=4} shows that $(2,0,0)$ cannot be realized, while the argument at the end of the same section shows that $(2,t,t)$ is not realizable if $t\geq 2$. 
Therefore, we only need to discuss the solution $(2,1,1)$. 
Since $\ga_1\cdot\ga_2=1$ we may take $\ga_1$ and $\ga_2$ as a symplectic basis and write $\ga_3=a\ga_1+b\ga_2$.
Then, $\ga_1\cdot\ga_3=1$ and $\ga_2\cdot\ga_3=2$ imply $\ga_3=\ga_2-2\ga_1$ 
and the monodromy is $m_L = \tau_{\ga_2-2\ga_1}\tau_{\ga_2}\tau_{\ga_1}^{-1}$. 
One can easily check that $m_L(\ga_1)=-\ga_1$, which implies $\ga_1=\pm\la$. 
Moreover, $\ga_2 = \mp\mu+k\la$ for some $k\in\Z$ and $m_L(\ga_2)=5\ga_1-\ga_2$.
Arguing as in Section~\ref{ss:111-a=0}, we may apply a $1$-handle twist which 
does not change the underlying $4$-manifold. Therefore, up to changing the signs 
of all the curves, we may assume $\ga_1=-\la$ and 
$\ga_2 = \mu$. Thus, $\ga_3 = \mu + 2\la$, $m_L = -\tau_\la^{-5}$. 
As in the previous cases, Figure~\ref{f:fig3} illustrates some simple Kirby calculus showing $\hat X\cong \CP^2$.   
\begin{figure}[ht]
	\labellist
	\hair 2pt
	\pinlabel $+1$ at 17 17
	\pinlabel $-1$ at 58 44
	\pinlabel $+1$ at 70 65
	\pinlabel $\ga_2$ at 86 30
	\pinlabel $\ga_1$ at 109 30
	\pinlabel $\ga_3$ at 127 73
	\pinlabel $+1$ at 180 23
	\pinlabel $+1$ at 223 68
	\pinlabel $0$ at 215 43
	
	\pinlabel $0$ at 310 50
	\pinlabel $+1$ at 310 80
%	\pinlabel $\cong$ at 330 50
%	\pinlabel $-1$ at 341 68
%	\pinlabel $-1$ at 341 40
	\endlabellist
	\centering
	\includegraphics[width=11cm]{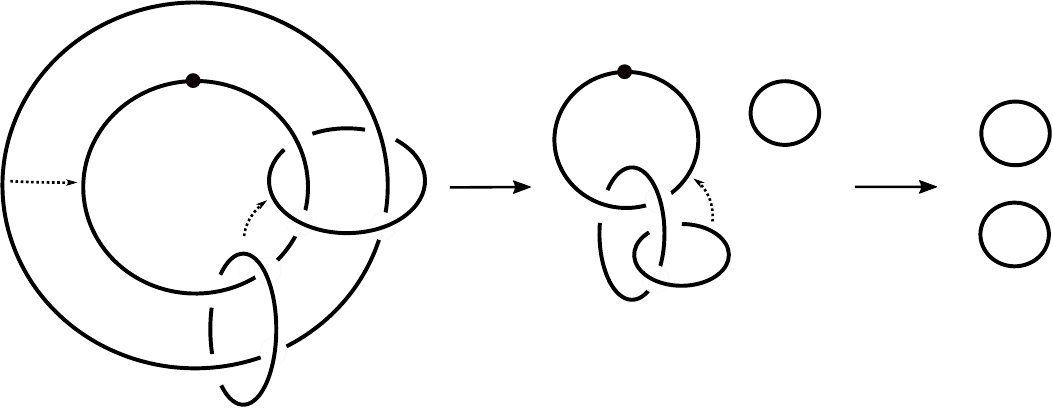}
	\caption{Proof that $\hat X\cong\CP^2$ when $(\de_1,\de_2,\de_3)=(-1,1,1)$ and $a=4$}
	\label{f:fig3}
\end{figure}

Putting together the results of Sections~\ref{ss:111-a=0},~\ref{ss:111-a=4},~\ref{ss:-111-a=0} and~\ref{ss:-111-a=4} yields Theorem~\ref{t:chi=3} and establishes Theorem~\ref{t:type11} when $\chi(\hat X)=3$. 

\section{Proof of Theorem~\ref{t:disj-emb}}\label{s:disj-emb}

Suppose that we have a $(1,1,3,1)$-decomposition of $\CP^2$ with factorization 
\begin{equation}\label{e:*}%\tag{$*$}
\tau^{\de_3}_{\gamma_3} \tau^{\de_2}_{\gamma_2} \tau^{\de_1}_{\gamma_1}=\pm \tau_\la^k 
\end{equation} 
and associated triple $(x_1,x_2,x_3)$, defined as in Equation~\eqref{e:relations1}: 
\begin{equation*}
(x_1,x_2,x_3) = (\ga_2\cdot\ga_3, \ga_1\cdot\ga_3,\ga_1\cdot\ga_2) 
\end{equation*}
In this section we are going to show how to recover the curves $\ga_i=p_i \mu + q_i \la$ from $(x_1,x_2,x_3)$. The key fact we are going to use is the following. 
\begin{lemma}\label{l:disballs}
A horizontal decomposition of a smooth, closed $4$-manifold $X$ of type $(1,1,3,1)$ with factorization~\eqref{e:*} determines an embedding of the disjoint union $\de_1 B_{p_1,q_1}\cup\de_2 B_{p_2,q_2}\cup\de_3 B_{p_3,q_3}$ into $X$. 
\end{lemma}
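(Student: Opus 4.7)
The plan is to elaborate on the observation quoted from \cite[Lemma~5.1]{LP22-2} already recalled in the paragraph before Theorem~\ref{t:disj-emb}: I will carve out of $X$ three disjoint sub-4-manifolds, each of which is a $1$-handle plus a single $2$-handle of the given horizontal decomposition, and then identify each such piece with $\de_i B_{p_i,q_i}$ via \cite[Proposition~3.1]{LP22-2}.

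Concretely, let $H\subset X$ be the union of the $0$- and $1$-handle; then $H\cong S^1\x D^3$ and $\del H=S^1\x S^2$ carries the standard genus-$1$ Heegaard torus $T$. By horizontality, the attaching circle $\ga_i$ of the $i$-th $2$-handle sits on $\fS_i=T\x\{t_i\}$ inside a collar $T\x[0,1]\subset\del H$, with $0<t_1<t_2<t_3<1$. I will parametrize $D^3=\{(u,v,z)\co u^2+v^2+z^2\leq 1\}$ so that the collar coordinate $t$ corresponds to $z$ and the equator $\{z=0\}\cap S^2$ corresponds to the Heegaard circle of $T$, and then for a sufficiently small $\ep>0$ set
\[
H_i := S^1\x\{(u,v,z)\in D^3\co z\in[t_i-\ep,t_i+\ep]\}\subset H.
\]
For $\ep$ small enough the $H_i$ are pairwise disjoint; each is diffeomorphic to $S^1\x D^3$, and its boundary $\del H_i\cong S^1\x S^2$ inherits a standard genus-$1$ Heegaard torus which is naturally identified with $\fS_i$ in an annular neighborhood of $\ga_i$.

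Next, since the attaching regions of the three $2$-handles can be taken to be pairwise disjoint small thickenings of the $\ga_i$'s, I can attach the $i$-th $2$-handle directly to $H_i$ along $\ga_i\subset\del H_i$ with framing $\fr(\ga_i)=\fr_T(\ga_i)-\de_i=p_iq_i-\de_i$. Call $N_i$ the resulting submanifold of $X$; by construction the $N_i$ are pairwise disjoint. Each $N_i$ is precisely a $(1,1,1,0)$-horizontal cobordism from $S^3$ to $\del_+ N_i$: namely, $S^1\x D^3$ together with a single $2$-handle attached along a slope-$(p_i,q_i)$ curve on the Heegaard torus of $\del(S^1\x D^3)$ with framing $p_iq_i-\de_i$. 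Applying \cite[Proposition~3.1]{LP22-2} (which is exactly the identification used to handle the $n=0$ case in Section~\ref{s:chileq2}), I conclude $N_i\cong\de_i B_{p_i,q_i}$, whence the disjoint union $\de_1 B_{p_1,q_1}\cup\de_2 B_{p_2,q_2}\cup\de_3 B_{p_3,q_3}$ embeds in $X$.

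The main obstacle is verifying that the Heegaard torus inherited by $\del H_i$ from the slab construction really does coincide with $\fS_i$ near $\ga_i$ as a framed submanifold -- in particular that the Seifert framing $\fr_T(\ga_i)=p_iq_i$ computed inside $\del H$ agrees with the Seifert framing computed inside $\del H_i$, so that attaching with the prescribed framing $p_iq_i-\de_i$ does produce $\de_i B_{p_i,q_i}$ via Proposition~3.1 without any off-by-one sign error. This is a local check on the slab thickening: the normal direction to $\fS_i$ in $\del H_i$ agrees up to isotopy with the normal direction to $\fS_i$ in $\del H$, because both point in the $z$-direction by construction.
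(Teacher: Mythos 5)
Your proposal is correct and follows essentially the same route as the paper: both carve $S^1\x D^3$ into pairwise disjoint slabs (the paper writes $S^1\x D^3$ as $H_1\x[0,1]$ with $H_1$ a genus-$1$ handlebody and takes $H_1\x[t_i-\ep,t_i+\ep]$, which is your $z$-slab construction in different coordinates), observe that each slab together with the corresponding $2$-handle embeds disjointly in $X$, and identify each piece with $\de_i B_{p_i,q_i}$ by the relevant result of~\cite{LP22-2} (the paper invokes Proposition~3.2 there, which gives the precise orientation and parameters, rather than Proposition~3.1). Your closing check on the compatibility of the Seifert framings between $\del H$ and $\del H_i$ is a reasonable point that the paper leaves implicit.
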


\begin{proof}
Note that $S^1\x D^3$ is obtained from $H_1\x [0,1]$ by smoothing corners, where $H_1$ is a genus-$1$ handlebody. 
Hence, the attaching spheres of the three horizontal $2$-handles of $X$ are of the form 
$\ga_i = \ga\x\{t_i\}\subset\TT_i:=\del H_1\x\{t_i\}$ for some $t_1, t_2, t_3\in (0,1)$ with $t_1<t_2<t_3$. 
For some small $\ep>0$ the subsets $H_1\x [t_i-\ep,t_i+\ep]\subset H_1\x [0,1]$, $i=1,2,3$, 
are pairwise disjoint, and smoothing corners yields three pairwise disjoint copies of $S^1\x D^3$ sitting inside $S^1\x D^3$.
Clearly, the core disks of the three horizontal $2$-handles are pairwise disjoint as well, therefore $X$ contains the disjoint union 
$X_1\cup X_2\cup X_3$, where $X_i$ is the smooth $4$-manifold obtained by attaching a four-dimensional $2$-handle to $S^1\x D^3$ 
along $\ga_i\subset\TT_i$ with framing $\fr(\ga_i)=\fr_{\TT_i}(\ga_i) - \de_i$. 
By ~\cite[Proposition~3.2]{LP22-2}, $X_i$ is orientation-preserving diffeomorphic to $\de_i B_{p_i,q_i}$. 
This concludes the proof. 
\end{proof}

Evaluating the left-hand side of~\eqref{e:*} at $\ga_1$ and taking the products with $\ga_i$ gives 
\[
\begin{split}
\tau^{\de_3}_{\gamma_3} \tau^{\de_2}_{\gamma_2} \tau^{\de_1}_{\gamma_1}(\ga_1) \cdot \ga_i=
\tau^{\de_3}_{\gamma_3} \tau^{\de_2}_{\gamma_2}(\ga_1) \cdot \ga_i=
\tau^{\de_2}_{\gamma_2}(\ga_1) \cdot \tau^{-\de_3}_{\gamma_3}(\ga_i)=
(\ga_1-\de_2 x_3 \ga_2) \cdot (\ga_i-\de_3 (\ga_3 \cdot \ga_i) \ga_3)
\end{split} 
\]
for $i=1,2,3$. When $i=1$, using that $(x_1,x_2,x_3) \in S_a^{\de_1,\de_2,\de_3}$ where $a=2-\tr (m_L) \in \{0,4\}$
we obtain the equality  
\[
\tau^{\de_3}_{\gamma_3} \tau^{\de_2}_{\gamma_2} \tau^{\de_1}_{\gamma_1}(\ga_1) \cdot \ga_1=
\de_3x_2^2+\de_2 x_3^2-\de_2 \de_3 x_1x_2x_3=\de_1 a-\de_1\de_2\de_3x_1^2.
\]
On the other hand, evaluating the right-hand side of~\eqref{e:*} at $\ga_1$ and taking the intersection product with $\ga_i$ gives \[
\pm \tau_\la^k(\ga_1) \cdot \ga_i=\pm(\ga_1+k(\la \cdot \ga_1) \la) \cdot \ga_i.
\]
Equating the expressions obtained for $i=1,2,3$ we get following equations: 
\begin{equation}\label{e:xp-equations}
\begin{cases}
\de_1 a-\de_1\de_2\de_3x_1^2 = \pm k p_1^2\\
x_3+\de_3x_1x_2-\de_2\de_3x_1^2x_3 = \pm (x_3+kp_1p_2)\\
x_2-\de_2 x_1 x_3 = \pm (x_2+kp_1p_3).
\end{cases}
\end{equation}

\begin{comment} 
Evaluating both sides of~\eqref{e:*} at $\ga_1$ and taking the products with $\ga_i$ ($i=1,2,3$) produces three equations whose left-hand sides are
\[
\begin{split}
\tau^{\de_3}_{\gamma_3} \tau^{\de_2}_{\gamma_2} \tau^{\de_1}_{\gamma_1}(\ga_1) \cdot \ga_i=
\tau^{\de_3}_{\gamma_3} \tau^{\de_2}_{\gamma_2}(\ga_1) \cdot \ga_i=
\tau^{\de_2}_{\gamma_2}(\ga_1) \cdot \tau^{-\de_3}_{\gamma_3}(\ga_i)=
(\ga_1-\de_2 x_3 \ga_2) \cdot (\ga_i-\de_3 (\ga_3 \cdot \ga_i) \ga_3)=\\
=
\begin{cases}
\de_3x_2^2+\de_2 x_3^2-\de_2 \de_3 x_1x_2x_3=\de_1 a-\de_1\de_2\de_3x_1^2 & \text{if} \quad i=1 \\
x_3+\de_3x_1x_2-\de_2\de_3x_1^2x_3 & \text{if} \quad i=2 \\
x_2-\de_2 x_1 x_3 & \text{if} \quad i=3. 
\end{cases}
\end{split}
\]
(in the case $i=1$ we have used $(x_1,x_2,x_3) \in S_a^{\de_1,\de_2,\de_3}$, where $a=2-\tr (m_L) \in \{0,4\}$) and whose right-hand sides are
\[
\pm \tau_\la^k(\ga_1) \cdot \ga_i=\pm(\ga_1+k(\la \cdot \ga_1) \la) \cdot \ga_i=
\begin{cases}
\pm k p_1^2 & \text{if} \quad i=1 \\
\pm (x_3+kp_1p_2) & \text{if} \quad i=2 \\
\pm (x_2+kp_1p_3) & \text{if} \quad i=3. 
\end{cases}
\]
\end{comment} 
Note that the $\pm$ signs on the right-hand sides are plus signs if and only if $a=0$, since both statements are equivalent to $\tr(m_L)=2$. 
%We are going to apply Equations~\eqref{e:xp-equations} to those factorizations %$(\tau_{\ga_3}^{\de_3},\tau_{\ga_2}^{\de_2},\tau_{\ga_1}^{\de_1})$ which correspond to Kirby diagrams of $\pm \CP^2$. 

\begin{rmk}\label{r:factperm}
If a $(1,1,3,1)$-decomposition of $\pm\CP^2$ has factorization $(\tau_{\ga_3}^{\de_3},\tau_{\ga_2}^{\de_2},\tau_{\ga_1}^{\de_1})$, the sequence of Hurwitz moves 
\[
\tau_{\ga_3}^{\de_3}\tau_{\ga_2}^{\de_2}\tau_{\ga_1}^{\de_1} \rightarrow
\tau_{\ga_3}^{\de_3}\tau_{\tau_{\ga_2}^{\de_2}(\ga_1)}^{\de_1}\tau_{\ga_2}^{\de_2}
\rightarrow
\tau_{\tau_{\ga_3}^{\de_3}\tau_{\ga_2}^{\de_2}(\ga_1)}^{\de_1}\tau_{\ga_3}^{\de_3}\tau_{\ga_2}^{\de_2}=
\tau_{\tau_{\ga_3}^{\de_3}\tau_{\ga_2}^{\de_2}\tau_{\ga_1}^{\de_1}(\ga_1)}^{\de_1}\tau_{\ga_3}^{\de_3}\tau_{\ga_2}^{\de_2}=
\tau_{\pm \tau_{\la}^k(\ga_1)}^{\de_1}\tau_{\ga_3}^{\de_3}\tau_{\ga_2}^{\de_2}
\]
produces another Kirby diagram of $\pm\CP^2$ and an embedding of the same triple of rational homology 
balls, because changing a curve $\ga_i$ in a factorization by a power of $\tau_\la$ corresponds 
to twisting the $1-$handle of $S^1 \x D^3 \cup X_{\ga_i}$. Note that these moves permute 
$(\de_1,\de_2,\de_3)$ cyclically. Therefore, in order to prove Theorem~\ref{t:disj-emb} it suffices to 
consider, for each possible value of $a$, a single 
representative of $\bde=(\de_1,\de_2,\de_3)$ in each orbit of the $\Z/3\Z$-action by cyclic permutations.
\end{rmk} 

The proof of Theorem~\ref{t:disj-emb} proceeds as follows. The analysis of Sections~\ref{ss:111-a=0}, \ref{ss:111-a=4}, 
\ref{ss:-111-a=0} and~\ref{ss:-111-a=4} together with Remark~\ref{r:factperm} show that 
a closed $4$-manifold with a $(1,1,3,1)$-decomposition is diffeomorphic to $\CP^2$ or $\overline\CP^2$ precisely 
in the cases of Table~\ref{tab:table}. In Sections~\ref{ss:pairs=111-a=0}, \ref{ss:pairs=111-a=4}, \ref{ss:pairs=-111-a=0} and~\ref{ss:pairs=-111-a=4} we analize the cases corresponding to the four columns of Table~\ref{tab:table}. It turns out that the second column of Table~\ref{tab:table} is not relevant for Theorem~\ref{t:disj-emb}, while the first, third and fourth columns correspond, respectively, to the embeddings of Theorem~\ref{t:disj-emb}(1), (2) and (3). 
\begin{table}[ht]
\centering
\begin{tabular}{ c|cccc } 
$\bde$ &  $(1,1,1)$ & $(1,1,1)$ & $(-1,1,1)$ & $(-1, 1, 1)$ \\ 
\hline 
$a$ & $0$ & $4$ & $0$ & $4$ \\
$m_L$ & $\tau_\la^{-9}$ & $-\tau_\la^{-3}$ & $\tau_\la$ & $-\tau_\la^{-5}$\\
minimal factorization & 
$\tau_{\mu+2\la}\tau_{2\mu+\la}\tau_{\mu-\la}$ & 
$\tau_{\mu+\la}\tau_\la\tau_\mu$ & 
$\tau_\la\tau_\mu\tau_\mu^{-1}$ 
& $\tau_{\mu+2\la}\tau_\mu\tau_{-\la}^{-1}$\\  
minimal solutions & $(3,3,3)$ & $(-1,1,1)$ & $(1,1,0)$, $(1,0,1)$ & $(2,1,1)$\\
underlying $4$-manifold & $\CP^2$ & $\overline\CP^2$ & $\overline\CP^2$ & $\CP^2$\\
\end{tabular}
\caption{}
\label{tab:table}
\end{table}

\subsection{$\bde=(1,1,1)$, $a=0$ and $m_L=\tau_\la^{-9}$}\label{ss:pairs=111-a=0}
In this case Equations~\eqref{e:xp-equations} 
take the form 
\begin{equation}\label{e:relations0}
\begin{cases}
-x_1^2 = k p_1^2 \\
x_3 + x_1 x_2 - x_1^2 x_3 = x_3 + k p_1 p_2\\
x_2 - x_1 x_3 = x_2 + k p_1 p_3.
\end{cases}
\end{equation}
Moreover, in Section~\ref{ss:111-a=0} we showed that the underlying $4$-manifold is $\CP^2$ 
only when $k=-9$ and $(x_1,x_2,x_3)$ is a non-trivial solution of Equation~\eqref{e:cohn} 
obtained via a sequence of mutations and sign changes from the minimal solution $(3,3,3)$. 
Therefore -- as one can prove by induction on the number of mutations -- we can write $x_i=3y_i$, 
where $(y_1,y_2,y_3)$ are nonzero integers satisfying Markov's equation
\[
y_1^2+y_2^2+y_3^2 = 3 y_1 y_2 y_3.
\]
Relations~\eqref{e:relations0} can be written as follows:  
\begin{equation}\label{e:relations2}
\begin{cases}
y_1^2 = p_1^2 \\
y_1 (y_2 - 3 y_1 y_3) = - p_1 p_2\\
y_1 y_3 = p_1 p_3.
\end{cases}
\end{equation}
Solving for $p_1$, $p_2$ and $p_3$ we obtain 
$p_1 = \pm y_1$, $p_2 = \pm (3y_1 y_3 - y_2) = \pm \hat y_2 $ and $p_3 = \pm y_3$.  
Therefore we have 
\[
\ga_1 = \pm y_1\mu + q_1\la,\quad 
\ga_2 = \pm \hat y_2\mu + q_2\la\quad\text{and}\quad
\ga_3 = \pm y_3\mu + q_3\la,  
\]
where, due to Relations~\eqref{e:relations1}, the $q_i$'s satisfy 
\begin{equation}\label{e:qsystem}
y_1 q_2 - \hat y_2 q_1 = \pm 3y_3,\quad 
y_1 q_3 - y_3 q_1 = \pm 3y_2\quad\text{and}\quad 
\hat y_2 q_3 - y_3 q_2 = \pm 3y_1.
\end{equation} 
Note that the first and last of Equations~\eqref{e:qsystem} imply 
\begin{equation}\label{e:qcong}
q_1 \equiv \mp 3\frac{y_3}{\hat y_2}\bmod y_1,\quad 
q_2 \equiv \mp 3\frac{y_1}{y_3}\bmod\hat y_2\quad\text{and}\quad 
q_3 \equiv \pm 3\frac{y_1}{\hat y_2}\bmod y_3.
\end{equation}
Therefore, arguing as in~\cite[Proposition~3.2 and Lemma~5.1]{LP22-2} we conclude that when a $(1,1,3,1)$-decomposition induces an embedding in $\CP^2$ of a  disjoint union of $B_{y,q}$'s, then union is of the form 
\begin{equation}\label{e:tripleofballs-111}
B_{y_1,q_1}\cup B_{\hat y_2,q_2}\cup B_{y_3,q_3},
\end{equation}
where the $y_i$'s satisfy Markov's equation and the $q_i$'s satisfy Equations~\eqref{e:qcong}.

Conversely, given any nonzero Markov triple 
$(y_1,y_2,y_3)$, the system of 
congruences~\eqref{e:qcong} has a unique solution in the $q_i$'s. In fact, when 
$(y_1,y_2,y_3)$ is a nonzero Markov triple, for either choice of sign 
there is a triple $(q_1,q_2,q_3)$ satisfying 
Equations~\eqref{e:qsystem} and therefore also Equations~\eqref{e:qcong}. Indeed, 
since $(y_1,\hat y_2,y_3)$ is also a nonzero Markov triple, $y_1$, $\hat y_2$ and $y_3$ are 
pairwise coprime and $y_2 \hat y_2=y_1^2+y_3^2$, which implies 
$y_3/\hat y_2\equiv y_2/y_3\bmod y_1$. 
Therefore we can find $q_1$, $q_2$ and $q_3$ which solve simultaneously 
the first two Equations~\eqref{e:qsystem}. 
But notice that the last equation is obtained dividing by $y_1$ the difference between 
the second equation multiplied by $\hat y_2$ 
and the first equation multiplied by $y_3$. 
It follows that $(q_1,q_2,q_3)$ is in fact a solution 
the whole System~\eqref{e:qsystem}. Now define $\ga_1 = y_1\mu+q_1\la$, 
$\ga_2 = \hat y_2\mu+q_2\la$ and $\ga_3 = y_3\mu+q_3\la$ and consider the  
cobordism $X\co\del_- X\to\del_+ X$ with a horizontal decomposition of type $(1,1,3,1)$ prescribed  
by the factorization $(\tau_{\ga_3},\tau_{\ga_2},\tau_{\ga_1})$. Then, by 
Equations~\eqref{e:qsystem} the triple 
$(x_1,x_2,x_3) = (\ga_2\cdot\ga_3, \ga_1\cdot\ga_3,\ga_1\cdot\ga_2)$ 
is a nonzero solution of Equation~\eqref{e:cohn} with $a=0$. 
Using the fact that $(y_1,y_2,y_3)$ and $(y_1,\hat y_2,y_3)$ 
are Markov triples mutant of each other it is straightforward to check that 
Equation~\eqref{e:eq2} holds replacing $(p_1,p_2,p_3)$ with $(y_1,\hat y_2,y_3)$ 
and $(x_1,x_2,x_3)$ with $(3y_1,3y_2,3y_3)$. Since Equation~\eqref{e:eq2} is equivalent 
to $m_L(\la)\cdot\la=0$, this implies $m_L(\la) = \pm \la$ and therefore $X\co S^3\to S^3$.
Moreover, by Lemma~\ref{l:cohn} the triple $(x_1,x_2,x_3)$ is obtained by a sequence 
of mutations from $(3,3,3)$, the arguments of Section~\ref{ss:111-a=0} show that 
$\hat X\cong\CP^2$ and the disjoint union~\eqref{e:tripleofballs-111} embeds in it. 
Since every nonzero Markov triple is of the form $(y_1,\hat y_2,y_3)$, 
this shows that any disjoint union of the form~\eqref{e:tripleofballs-111} embeds in $\CP^2$. 
This covers the embeddings of Theorem~\ref{t:disj-emb}(1).

\subsection{$\bde = (1,1,1)$, $a=4$ and $m_L = -\tau_\la^{-3}$}\label{ss:pairs=111-a=4}

By the results of Section~\ref{ss:111-a=4}, in this case the underlying $4$-manifold is $\overline\CP^2$. 
Moreover, the only realizable weakly minimal triple in $S_4^{1,1,1}$ is $(-1,1,1)$, and a simple verification shows that  
the only triples obtainable from $(-1,1,1)$ via mutations are $(2,1,1)$, $(-1,-2,1)$ and 
$(-1,1,-2)$. On the other hand, as observed in Section~\ref{ss:hurwitz}, mirroring the   
horizontal decompositions produces decompositions of $\CP^2$ with monodromy 
$m_L = -\tau_\la^3$ and it induces an identification 
$S_4^{-1,-1,-1}\longleftrightarrow S_4^{1,1,1}$ given by $(x_1,x_2,x_3)\mapsto (-x_1,-x_2,-x_3)$. Thus, since we are interested in embeddings in $\CP^2$, we only need to consider the triples $(1,-1,-1)$, $(-2,-1,-1)$, $(1,2,-1)$ and $(1,-1,2)$. When $\de=(-1,-1,-1)$, $a=4$ and $m_L = -\tau_\la^3$   Equations~\eqref{e:xp-equations} become 
\begin{equation}\label{e:relations3}
\begin{cases}
x_1^2 - 4 = -3 p_1^2 \\
2x_3 - x_1 x_2 - x_1^2 x_3 = - 3 p_1 p_2 \\
2 x_2 + x_1 x_3 = - 3 p_1 p_3.
\end{cases}
\end{equation}
If $(x_1,x_2,x_3)=(1,-1,-1)$ it is easy to check that the only solutions are  
$(p_1,p_2,p_3) = \pm (1,0,1)$. When $(x_1,x_2,x_3)=(1,2,-1)$ and 
$(x_1,x_2,x_3)=(1,-1,2)$ the only solutions are, respectively, $(p_1,p_2,p_3)=\pm(1,1,-1)$ 
and $(p_1,p_2,p_3)=\pm (1,-1,0)$. Finally, when $(x_1,x_2,x_3)=(-2,-1,-1)$ 
Equations~\eqref{e:relations3} imply $p_1=0$ and say nothing about $p_2$ nor $p_3$. 
In order to extract information about $p_2$ and $p_3$ we evaluate both sides of Equation~\eqref{e:*} on $\ga_2$ and we take the intersection product with $\ga_3$, obtaining the equation
\[
2 x_1 - x_2 x_3 - x_1 x_3^2  = -3 p_2 p_3.
\]
The only solutions when $(x_1,x_2,x_3)=(-2,-1,-1)$ are $(p_2,p_3)=\pm (1,1)$, 
and we conclude $(p_1,p_2,p_3) = \pm (0,1,1)$. We observe that in every case we have 
$\max\{p_i\}\leq 1$, therefore none of the embeddings obtained when $\bde = (1,1,1)$ and 
$a=4$ are relevant for the statement of Theorem~\ref{t:disj-emb}. 

\subsection{$\bde = (-1,1,1)$, $a=0$ and $m_L = \tau_\la$}\label{ss:pairs=-111-a=0}
The analysis of Section~\ref{ss:-111-a=0} shows that 
$(x_1,x_2,x_3)$ is obtained by a sequence of mutations and sign changes from 
either $(1,1,0)$ or $(1,0,1)$. This implies that the $x_i$'s are pairwise coprime, 
otherwise by Equation~\eqref{e:negcohn} they would have a non-trivial common factor, 
but such property is preserved by mutation and sign changes while $(1,1,0)$ and 
$(1,0,1)$ do not have it. Moreover, the only solution of Equation~\eqref{e:negcohn} 
with $x_1=0$ is the trivial one $(0,0,0)$, which is equivalent only to itself  
up to mutations and sign changes. Therefore $x_1\neq 0$.

Since the underlying $4$-manifold is $\overline\CP^2$ we argue as in 
Section~\ref{ss:pairs=111-a=4}. There is a $1-1$ correspondence $S_0^{1,-1,-1}\longleftrightarrow S_0^{-1,1,1}$ given by the map sending 
$(x_1,x_2,x_3)$ to $(-x_1,-x_2,-x_3)$, and induced by the operation of taking 
the mirror image of the Kirby diagram of the underlying $4$-manifold.
Thus, the triples $(x_1,x_2,x_3)\in S_0^{1,-1,-1}$ whose underlying $4$-manifold is $\CP^2$ are those obtained by a sequence of mutations and sign changes from either 
$(-1,-1,0)$ or $(-1,0,-1)$, the $x_i$'s are pairwise coprime and $x_1\neq 0$. 
When $\bde = (1,-1,-1)$, $a=0$ and $k=-1$ Equations~\eqref{e:xp-equations} read  
\begin{equation}\label{e:relations4}
\begin{cases}
x_1^2 = p_1^2, \\
x_1 (x_2 + x_1 x_3) = p_1 p_2 \\
- x_1 x_3 = p_1 p_3.
\end{cases}
\end{equation}
and  solving for $p_1$, $p_2$ and $p_3$ we obtain 
$p_1 = \pm x_1$, $p_2 =\mp\hat x_2$ and $p_3=\mp x_3$. 
Therefore 
\[
\ga_1 = \pm x_1\mu + q_1\la,\quad 
\ga_2 = \mp \hat x_2\mu + q_2\la\quad\text{and}\quad
\ga_3 = \mp x_3\mu + q_3\la  
\]
and, due to Relations~\eqref{e:relations1}, the $q_i$'s satisfy 
\begin{equation}\label{e:qsystem2}
x_1 q_2 + \hat x_2 q_1 = \pm x_3,\quad 
x_1 q_3 + x_3 q_1 = \pm x_2\quad\text{and}\quad 
x_3 q_2 - \hat x_2 q_3 = \pm x_1.
\end{equation} 
Note that the first and last of Equations~\eqref{e:qsystem2} 
imply 
\begin{equation}\label{e:qcong2}
\hat x_2 q_1\equiv\pm x_3\bmod x_1,\quad 
x_3 q_2\equiv\pm x_1\bmod\hat x_2,\quad\text{and}\quad  
x_2 q_3\equiv\mp x_1\bmod x_3.
\end{equation}
We can conclude that if a $(1,1,3,1)$-decomposition with factorization of the form 
$(\tau_{\ga_3}^{-1},\tau_{\ga_2}^{-1},\tau_{\ga_1})$ 
induces an orientation-preserving embedding in $\CP^2$ of a 
disjoint union of $\pm B_{x,q}$'s, then such a union is of the form 
\begin{equation}\label{e:tripleofballs-1-1-1}
B_{x_1,q_1}\cup -B_{x_2,q_2}\cup -B_{x_3,q_3},
\end{equation}
where the $x_i$'s satisfy Equation~\eqref{e:eq1} with $\bde = (1,-1,-1)$ and $a=0$ and the $q_i$'s 
satisfy~\eqref{e:qcong2}. 

We are now going to show that, conversely, if 
$(x_1,x_2,x_3)$ is a non-zero triple of integers satisfying 
Equation~\eqref{e:eq1} with $\bde = (1,-1,-1)$ and $a=0$ and the triple $(x_1,x_2,x_3)$  
is obtained by a sequence of mutations and sign changes from either 
$(-1,-1,0)$ or $(-1,0,-1)$, then the disjoint union~\eqref{e:tripleofballs-1-1-1}, 
where the triple $(q_1,q_2,q_3)$ satisfies~\eqref{e:qcong2}, smoothly embeds in $\CP^2$.
We start observing that there is a triple $(q_1,q_2,q_3)$ satisfying 
Equations~\eqref{e:qsystem2} with a plus sign on the right-hand side. 
Indeed, since $(x_1,\hat x_2,x_3)$ is also a solution of Equation~\eqref{e:eq1} with 
$\bde = (1,-1,-1)$ and $a=0$, by the argument given at the beginning of this section 
$x_1$, $\hat x_2$ and $x_3$ are pairwise coprime. Moreover, 
by Equation~\eqref{e:eq1} we have $x_2 \hat x_2=x_3^2-x_1^2$, which implies 
$x_2 \hat x_2\equiv x_3^2\bmod x_1$ and, since $x_1\neq 0$, $(\hat x_2, x_3)\neq (0,0)$.  
Therefore there is a simultaneous solution $\xi\bmod x_1$ of the equations 
\[
\xi\hat x_2 = \pm x_3\bmod x_1\quad\text{and}\quad\xi x_3 = \pm x_2\bmod x_1.
\]
This guarantees that we can find $q_1$, $q_2$ and $q_3$ 
solving simultaneously the first two Equations~\eqref{e:qsystem2} with a plus sign 
on the right-hand side. Now note that the last equation with a plus sign on the right-hand 
side is obtained dividing by $x_1$ the difference 
between the first equation multiplied by $x_3$ and the second equation multiplied 
by $\hat x_2$. Since $x_1\neq 0$, it follows that $(q_1,q_2,q_3)$ is in fact 
a solution of the whole system. Now define 
\[
\ga_1 = x_1\mu+q_1\la,\quad \ga_2 = \hat x_2\mu+q_2\la\quad\text{and}\quad
\ga_3 = x_3\mu+q_3\la 
\]
and consider the cobordism $X\co\del_- X\to\del_+ X$ with a 
horizontal decomposition of type $(1,1,3,1)$ prescribed  
by the factorization $(\tau_{\ga_3}^{-1},\tau_{\ga_2}^{-1},\tau_{\ga_1})$. 
Then, by Equations~\eqref{e:qsystem2} the triple 
\[
(\ga_2\cdot\ga_3, \ga_1\cdot\ga_3,\ga_1\cdot\ga_2) = (x_1,x_2,x_3)
\]
is a solution of Equation~\eqref{e:eq1} with $\bde = (1,-1,-1)$ and $a=0$.
Using the fact that $(x_1,x_2,x_3)$ and $(x_1,\hat x_2,x_3)$ 
are mutant of each other it is straightforward to check using the equality 
$x_2\hat x_2 = x_3^2-x_1^2$ that Equation~\eqref{e:eq2} holds for $\bde = (1,-1,-1)$ replacing $(p_1,p_2,p_3)$ with $(x_1,\hat x_2,x_3)$. 
Since Equation~\eqref{e:eq2} is equivalent to $m_L(\la)\cdot\la=0$, 
this implies $m_L(\la) = \pm \la$ and therefore $X\co S^3\to S^3$. 
But the analysis of Section~\ref{ss:-111-a=0} shows that the $4$-manifold underlying 
the triple $(-x_1,-x_2,-x_3)\in S^{-1,1,1}_0$ is $\overline{\CP}^2$ precisely 
when $(-x_1,-x_2,-x_3)$ is obtained by a sequence of mutations from either 
$(1,1,0)$ or $(1,0,1)$. Since we are assuming that $(x_1,x_2,x_3)$ is 
obtained by a sequence of mutations and sign changes from either 
$(-1,-1,0)$ or $(-1,0,-1)$, this implies that the disjoint union~\eqref{e:tripleofballs-1-1-1} 
smoothly embeds in $\CP^2$, which is what we needed to show.
This covers the embeddings of Theorem~\ref{t:disj-emb}(2).

\subsection{$\bde = (-1,1,1)$, $a=4$ and $m_L=-\tau_\la^{-5}$}\label{ss:pairs=-111-a=4}

We observed in Section~\ref{ss:hurwitz} and Remark~\ref{r:factperm} that for each cyclic permutation $\sigma$ of $(1,2,3)$ the transformation 
$(\bde,\bx)\mapsto\si(\bde,\bx)$ is realized by a composition of two Hurwitz moves on the factorization. 
Therefore, the permutation $(123)$ induces a bijection $S^{-1,1,1}_4\to S^{1,-1,1}_4$ which preserves the set of integer triples realized as intersection numbers of triples of curves. 
By the results of Section~\ref{ss:-111-a=4} it follows that a triple 
$(x_1,x_2,x_3) \in S_4^{1,-1,1}$ is realized by three curves 
$\ga_1,\ga_2,\ga_3$ if and only if it is obtained from $(1,2,1)$ via mutations and sign changes.

It turns out that such triples, as well as the corresponding curves $\ga_i$, admit an explicit description involving the Fibonacci and the Lucas numbers. 
Recall that the Lucas sequence $(L_n)_{n \in \Z}$ is defined by the same recursive relation as the Fibonacci sequence, but starting from $L_0=2$ and $L_1=1$. We will use the following identities. 
\begin{align}\label{e:lucas} 
& L_n = F_{n-1}+F_{n+1}\tag{I1} \\
& L_{-n} = (-1)^n L_n\quad\text{and}\quad F_{-n}=(-1)^{n+1}F_n\tag{I2} \\
& L_m L_n = L_{m+n}+(-1)^n L_{m-n}\tag{I3} \\
& 5F_m F_n=L_{m+n}-(-1)^n L_{m-n}\tag{I4} \\
& L_mF_n=F_{m+n}-(-1)^n F_{m-n}\tag{I5} \\
& L_n^2=5F_n^2+4(-1)^n=L_{2n}+2(-1)^n\tag{I6}\\
& \gcd(F_m,F_n) = F_{\gcd(m,n)}\tag{I7}\\
& F_{m+n} = F_{m+1} F_n + F_m F_{n-1}\tag{I8}\\
& F_n^2 = F_{n+r} F_{n-r} + (-1)^{n-r} F_r^2,\quad n\geq r>0\tag{I9}
\end{align}
All of the above identities can be found e.g.~on the Wikipedia pages~\cite{Wifib, Wiluc}, except perhaps (I3), which follows easily from (I1) and (I5). 
Proposition~\ref{p:lucas-triples} below shows that the triples $(x_1,x_2,x_3)$ we 
are interested in have a specific form, described by the following definition. 
\begin{defn}
Let $a,b\in\Z$ be odd and coprime and $\ep \in \{\pm 1\}$ and denote by $L(\ep,a,b)$ 
the triple $(L_a,-\ep L_{a+\ep b},L_b)$. Define $L\subset\Z^3$ to be the subset of triples 
of the form $L(\ep,a,b)$ with $\ep,a,b$ as above. 
\end{defn}
\begin{prop}\label{p:lucas-triples}
The set $L$ is contained in $S_4^{1,-1,1}$ and is invariant under mutations and sign changes. 
More specifically, let $\mu_i$ be the $i$-th mutation map defined in Section~\ref{ss:hurwitz} and $\bde = (1,-1,1)$. Then, the following formulas hold:
\[
\mu_i(\bde, L(\ep,a,b))=
\begin{cases}
(\bde, L(\ep,-a-2\ep b,b)) & \text{if $i=1$,} \\
(\bde, L(-\ep,a,b))\quad & \text{if $i=2$},\\
(\bde, L(\ep,a,-b-2\ep a))\quad & \text{if $i=3$.} 
\end{cases}
\]
Moreover, $L$ coincides with the set of triples obtained from $(1,2,1)$ by mutations and sign changes.
\end{prop}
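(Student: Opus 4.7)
My plan is to organize the argument around the three assertions in the statement: (i) $L\subseteq S_4^{1,-1,1}$, (ii) the explicit mutation formulas, which together with closure under sign changes show that $L$ is invariant under the six operations, and (iii) $L$ coincides with the orbit of $(1,2,1)$ under mutations and sign changes.

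For (i), I would substitute $(x_1,x_2,x_3)=(L_a,-\ep L_{a+\ep b},L_b)$ into Equation~\eqref{e:eq1} with $\bde=(1,-1,1)$ and $a=4$, namely $x_1^2-x_2^2+x_3^2-x_1x_2x_3=-4$. Identity (I6), combined with the parity observation that $a,b$ are odd but $a+\ep b$ is even, yields
\[
L_a^2-L_{a+\ep b}^2+L_b^2=L_{2a}+L_{2b}-L_{2(a+\ep b)}-6.
\]
For the triple product, I would use (I3) to write $L_aL_b=L_{a+b}-L_{a-b}$ and then apply (I3) again to each of $L_{a+b}L_{a+\ep b}$ and $L_{a-b}L_{a+\ep b}$. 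A short case analysis on $\ep\in\{\pm1\}$ shows that the six $L$-terms cancel pairwise and the constant contribution is exactly $-4$.

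For (ii), each mutation formula reduces to a single Lucas identity. For example, for $\mu_1$ one has $\hat x_1=x_2x_3-x_1=-\ep L_{a+\ep b}L_b-L_a$; identity (I3) together with $(-1)^b=-1$ gives $L_{a+\ep b}L_b=L_{a+2\ep b}-L_a$, hence $\hat x_1=-L_{a+2\ep b}=L_{-a-2\ep b}$ by (I2). The formulas for $\mu_2$ and $\mu_3$ follow from analogous one-line computations. Closure under sign changes amounts to verifying
\begin{align*}
(-x_1,-x_2,x_3)\bigl(L(\ep,a,b)\bigr)&=L(-\ep,-a,b),\\
(-x_1,x_2,-x_3)\bigl(L(\ep,a,b)\bigr)&=L(\ep,-a,-b),\\
(x_1,-x_2,-x_3)\bigl(L(\ep,a,b)\bigr)&=L(-\ep,a,-b),
\end{align*}
each of which follows from $L_{-n}=(-1)^n L_n$ together with the evenness of $a+\ep b$.

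For (iii), the inclusion of the orbit of $(1,2,1)=L(-1,1,1)$ into $L$ is immediate from (ii). For the reverse inclusion, I would first compose $\mu_2$, the sign change $(-x_1,-x_2,x_3)$, and $\mu_1$ to realize the elementary move $(\ep,a,b)\mapsto(\ep,a+2\ep b,b)$, and symmetrically implement $(\ep,a,b)\mapsto(\ep,a,b+2\ep a)$ using $\mu_3$. Starting from an arbitrary $L(\ep,a,b)$, these moves and the sign changes suffice for a Euclidean-style reduction on $(|a|,|b|)$: if $|a|\ge|b|$, choose $\ep$ and $k\in\Z$ so that $|a+2k\ep b|<|b|$, and otherwise reduce $b$ analogously. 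Oddness and coprimality of $a,b$ are preserved, so the reduction strictly decreases $|a|+|b|$ until $(|a|,|b|)=(1,1)$, after which $\mu_2$ and suitable sign changes bring us to $L(-1,1,1)=(1,2,1)$. The main obstacle is making this Euclidean reduction rigorous: the move $a\mapsto a+2\ep b$ is coarser than the usual $a\mapsto a\bmod b$ step, but oddness of $a$ and $b$ ensures that $a\bmod 2b$ is a nonzero odd integer coprime to $b$, which can always be brought into $(0,b)$ via a sign change, guaranteeing termination at $(1,1)$.
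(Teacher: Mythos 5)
Your proposal is correct in substance, but it diverges from the paper's proof in two places worth noting. For membership of $L$ in $S_4^{1,-1,1}$, the paper avoids your case analysis on $\ep$ by observing that $L(1,a,b)$ and $L(-1,a,b)$ share $x_1=L_a$, $x_3=L_b$ and differ only in $x_2$, so both lie in $S_4^{1,-1,1}$ precisely when $-L_{a+b}$ and $L_{a-b}$ are the two roots of $x^2+L_aL_bx-(L_a^2+L_b^2+4)$; checking the sum and product of roots via (I2), (I3), (I6) proves membership for both signs at once \emph{and} delivers the $\mu_2$ formula for free, since the two roots are exactly $x_2$ and $\hat x_2$. Your direct expansion works but buys neither economy. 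For the final statement, the paper does not reduce the parameters $(a,b)$ Euclidean-style; it runs the descent on the triple entries themselves, invoking Proposition~\ref{p:negcohn}: if $|a|\neq|b|$ the three Lucas values have pairwise distinct absolute values, so some mutation strictly decreases $|x_1|+|x_2|+|x_3|$, and the descent terminates at $|a'|=|b'|$, whence $|a'|=|b'|=1$ by coprimality. Your parameter-level reduction is a legitimate alternative and is self-contained, but it needs the edge case $|b|=1$ handled separately (there is no odd residue of absolute value $<1$) and the observation that $a\equiv\pm b\bmod 2b$ would force $b\mid a$, which coprimality excludes for $|b|>1$; you gesture at both points but should make them explicit. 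The paper's route outsources all of this to an already-proved proposition.

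One computational slip: your intermediate identity $L_{a+\ep b}L_b=L_{a+2\ep b}-L_a$ is only valid for $\ep=1$; applying (I3) literally gives $L_{a+\ep b}L_b=L_{a+(\ep+1)b}-L_{a+(\ep-1)b}=\ep\bigl(L_{a+2\ep b}-L_a\bigr)$ after using $L_{-n}=(-1)^nL_n$. The stray factor of $\ep$ cancels against the $-\ep$ in $\hat x_1=-\ep L_{a+\ep b}L_b-L_a$, so your final formula $\hat x_1=L_{-a-2\ep b}$ is still correct, but as written the chain gives the wrong answer for $\ep=-1$.
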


\begin{proof}
By looking at Equation~\eqref{e:negcohn} for $\bde = (1,-1,1)$ and $a=4$,  
it is easy to check that, given odd coprime integers $a$ and $b$, the triples 
$L(1,a,b)$ and $L(-1,a,b)$ are in $S_4^{1,-1,1}$ if and only if 
$-L_{a+b}$ and $L_{a-b}$ are the roots of the polynomial 
\[
x^2+L_a L_b x-(L_a^2+L_b^2+4),
\]
or equivalently that
\[
-L_{a+b}+L_{a-b}=-L_aL_b \quad \text{and} \quad L_{a+b}L_{a-b}=L_a^2+L_b^2+4. 
\]
The first equality follows from (I2) and (I3), the second one from (I3) 
and the identity $L_a^2+L_b^2+4=L_{2a}+L_{2b}$, which follows from (I6). 
Now we compute the mutations of a generic triple 
$(x_1,x_2,x_3)=(L_a,-\ep L_{a+ \ep b},L_b)$. The argument just provided shows that 
$\hat x_2=\ep L_{a-\ep b}$, therefore $\mu_2(\bde, L(\ep,a,b))=(\bde, L(-\ep,a,b))$. 
Moreover, using (I2) and (I3), 
\[
\hat x_1=-\ep L_{a+\ep b}L_b-L_a=-\ep L_{\ep a + b}L_b-L_a=-\ep(L_{\ep a+2b}-L_{\ep a})-L_a=L_{-a-2\ep b}
\]
so that 
\[
\mu_1(\bde, L(\ep,a,b))=(\bde, L(\ep,-a-2\ep b,b)).
\]
A similar computation yields $\hat x_3=L_{-b-2\ep a}$ and 
\[
\mu_3(\bde, L(\ep,a,b))=(\bde, L(\ep,a,-b-2\ep a)).
\]
Finally, the triples that obtained from $L(\ep,a,b)$ by sign changes are
\begin{itemize}
    \item $(-L_a,\ep L_{a+\ep b},L_b)=(L_{-a},\ep L_{-a-\ep b},L_b)=L(-\ep,-a,b)$;
    \item $(-L_a,-\ep L_{a+\ep b},-L_b)=(L_{-a},-\ep L_{-a-\ep b},L_{-b})=L(\ep,-a,-b)$;
    \item $(L_a,\ep L_{a+\ep b},-L_b)=(L_a,\ep L_{-a-\ep b},L_{-b})=L(-\ep,a,-b)$.
\end{itemize}
This proves the first part of the statement. In order to prove the last sentence we observe that $(1,2,1)=L(-1,1,1)$. 
Combining this with the first part of the statement we get that every triple obtained from $(1,2,1)$ by mutations and sign changes 
has the form $L(\ep,a,b)$. Conversely, we claim that any triple $L(\ep,a,b)$ can be transformed by a 
(possibly empty) sequence of mutations into $L(\ep',a',b')$ with $|a'|=|b'|$. To prove the 
claim observe that if $|a| \neq |b|$ then the elements of the triple have pairwise distinct absolute 
values. Therefore, by Proposition~\ref{p:negcohn} there exists a mutation which makes 
the sum of the absolute values strictly decrease, and the claim follows from a standard 
infinite descent argument. Since $a'$ and $b'$ are coprime, we must have $|a'|=|b'|=1$, 
and up to a change of signs we can assume $a'=b'=1$, so that $L(\ep',a',b')$ is 
either $(1,2,1)$ or $(1,-3,1)$, which are mutants of each other.
\end{proof}
Consider a triple $(x_1,x_2,x_3)=(L_a,-\ep L_{a+ \ep b},L_b) \in S_4^{1,-1,1}$. 
Then, Equations~\eqref{e:xp-equations} take the form:  
\begin{equation}
\begin{cases}
4 + x_1^2 = 5 p_1^2\\
2x_3 + x_1 x_2 + x_1^2 x_3 = 5 p_1 p_2\\
2x_2 + x_1 x_3 = 5 p_1 p_3.
\end{cases}
\end{equation}
Using Identity (I6) we get $p_1=\pm F_a$ from the first equation, 
while using Identity (I4) the third equation can be written as 
\[
5p_1p_3=x_2-\hat x_2=-\ep L_{a+\ep b}-\ep L_{a-\ep b}=
-\ep(L_{a+b}+L_{a-b})=-5 \ep F_a F_b
\]
so that $p_3=\mp \ep F_b$ (since $F_a \neq 0$). Finally, using (I2), (I3) and 
(I4) we can write the second equation as
\[
\begin{split} 
5p_1p_2=2x_3+x_1(x_2+x_1x_3)=2x_3-x_1 \hat x_2=2L_b-\ep L_a L_{a-\ep b}=\\
=2L_b-\ep (L_{2a-\ep b}+L_{\ep b})=L_b-L_{2 \ep a-b}=5F_aF_{b-\ep a}
\end{split} 
\]
so that $p_2=\pm F_{b-\ep a}$. Therefore we have
\begin{equation}\label{e:gammas}
\ga_1 = \pm F_a\mu + q_1\la,\quad 
\ga_2 = \pm F_{b-\ep a}\mu + q_2\la\quad\text{and}\quad
\ga_3 = \mp \ep F_b \mu + q_3\la  
\end{equation} 
and, due to Relations~\eqref{e:relations1}, the $q_i$'s satisfy 
\begin{equation}\label{e:qsystem3}
F_a q_2 - F_{b-\ep a} q_1 =  \pm L_b,\quad 
F_a q_3 + \ep F_b q_1 = \mp \ep L_{a+\ep b}\quad\text{and}\quad 
F_{b-\ep a} q_3 + \ep F_b q_2 = \pm L_a.
\end{equation} 
Observe that $F_{b-\ep a} = 0$ implies $b=\ep a=\pm 1$ and therefore $F_a=F_b=1$. 
This leads to a case which falls outside the scope of Theorem~\ref{t:disj-emb}, therefore from now on we assume $F_{b-\ep a} \neq 0$. 
Note that the first and last of Equations~\eqref{e:qsystem3} 
imply 
\begin{equation}\label{e:qcong3}
q_1\equiv\mp\frac{L_b}{F_{b-\ep a}}\bmod F_a,\quad 
q_2\equiv\pm\frac{L_b}{F_a}\bmod F_{b-\ep a},\quad\text{and}\quad  
q_3\equiv\pm\frac{L_a}{F_{b-\ep a}}\bmod F_b.
\end{equation}

We conclude that if a $(1,1,3,1)$-decomposition with monodromy $-\tau_\la^{-5}$ 
induces an orientation-preserving embedding in $\CP^2$ of a 
disjoint union of $\pm B_{x,q}$'s, then such a union is of the form 
\begin{equation}\label{e:tripleofballs1-11}
B_{F_a,q_1}\cup -B_{F_{b-\ep a},q_2}\cup B_{F_b,q_3}
\end{equation}
for some odd, coprime integers $a,b$ and some $\ep\in\{\pm 1\}$, and where the $q_i$'s 
satisfy~\eqref{e:qcong3}. 

Now we want to argue that, conversely, if $a,b$ are odd and coprime and $\ep\in\{\pm 1\}$, then the disjoint union~\eqref{e:tripleofballs1-11}, where the $q_i$'s 
satisfy~\eqref{e:qcong3}, smoothly embeds in $\CP^2$. 
We start observing that, given $a,b$ and $\ep$ as above, 
there is a triple $(q_1,q_2,q_3)$ satisfying 
Equations~\eqref{e:qsystem3} with the first choice of sign on the right-hand side. 
Indeed, by (I2) and (I5) we have
\[
\ep L_b F_b - F_a L_a = F_{2\ep b} - F_{2a} = 
- L_{a+\ep b} F_{a - \ep b} = \ep L_{a+\ep b} F_{b-\ep a} \equiv 0\bmod F_{b-\ep a},
\]
therefore $L_b/F_a\equiv L_a/\ep F_b\bmod F_{b-\ep a}$. This guarantees that we can find 
integers $q_1$, $q_2$ and $q_3$ satisfying the first and third Equations~\eqref{e:qsystem3}.
On the other hand, $F_a$ times the third equation minus $\ep F_b$ times the first 
equation gives 
\[
F_{b-\ep a}(F_a q_3 + \ep F_b q_1) = F_a L_a - \ep F_b L_b 
\stackrel{{\rm (I5)}}{=} F_{2a} - F_{2\ep b} = -\ep F_{b-\ep a} L_{a+\ep b},
\]
and therefore the second equation holds.  
%\AP{unless $F_{b-\ep a} = 0$, which implies $b=\ep a=\pm 1$ and therefore $F_a=F_b=1$. The latter case falls outside the scope of Theorem~\ref{t:disj-emb}, therefore we can ignore it [Meglio spostare questa parte in rosso subito dopo la (4.16) per giustificare che in quest'ultima si può dividere per $F_{b-\ep a}$?]}.
Now define $\ga_i$, for $i=1,2,3$, as in~\eqref{e:gammas} with the first choice of sign 
and consider the cobordism $X$ with a horizontal decomposition of type $(1,1,3,1)$ 
prescribed by the factorization $(\tau_{\ga_3},\tau_{\ga_2}^{-1},\tau_{\ga_1})$.
We claim that Equation~\eqref{e:eq2} holds for $\bde = (1,-1,1)$ replacing $(p_1,p_2,p_3)$ with 
$(F_a,F_{b-\ep a},-\ep F_b)$ and $(x_1,x_2,x_3)$ with $(L_a,-\ep L_{a+\ep b}, L_b)$. 
Indeed, after these substitutions Equation~\eqref{e:eq2} is equivalent to  
\begin{equation}\label{e:fibolucas}
F_a^2 - F_{b-\ep a}^2 + F_b^2 + F_a F_{b-\ep a} L_b - F_a F_b L_{a+\ep b} 
- \ep F_{b-\ep a} F_b L_a + \ep F_a F_b L_a L_b = 0,    
\end{equation}
which turns out to be an identity involving Fibonacci and Lucas numbers. To see 
why~\eqref{e:fibolucas} holds, use (I2) and (I3) to replace the last term on the 
left-hand side with $F_a F_b (L_{a+\ep b} - L_{a-\ep b})$, $F_a F_{b-\ep a} L_b$ 
with $F_a (F_{2b-\ep a}-F_a)$, $F_a F_b L_{a+\ep b}$ with $F_a (F_{2b-\ep a} + F_a)$
and $-F_{b-\ep a} F_b L_{\ep a}$ with $-F_b^2 + F_{2a-\ep b} F_b$. 
After cancellations Equation~\eqref{e:fibolucas} becomes
\[
F_{2a-\ep b} F_b - F_a^2 - F_{a-\ep b}^2 = 0,
\]
which holds by (I9). 
Since Equation~\eqref{e:eq2} is equivalent to $m_L(\la)\cdot\la=0$, 
this implies $m_L(\la) = \pm \la$ and therefore $X\co S^3\to S^3$. 
Moreover, by Equations~\eqref{e:qsystem3} the triple 
$(\ga_2\cdot\ga_3, \ga_1\cdot\ga_3,\ga_1\cdot\ga_2) = (L_a,-\ep L_{a+\ep b},L_b)$ 
belongs to $L\subset S^{1,-1,1}_4$, therefore by the last statement of Proposition~\ref{p:lucas-triples}
it is obtained from $(1,2,1)$ by mutations and sign changes. Since the identification 
of $S^{1,-1,1}_4$ with $S^{-1,1,1}_4$ is induced by Hurwitz moves, by the analysis of 
Section~\ref{ss:-111-a=4} we conclude that $\hat X\cong\CP^2$ and the 
disjoint union~\eqref{e:tripleofballs1-11} smoothly embeds in it. 
This covers the embeddings of Theorem~\ref{t:disj-emb}(3) and concludes the proof 
of Theorem~\ref{t:disj-emb}.

\section{Proof of Theorem~\ref{t:manyballs}}\label{s:manyballs} 

The organization of the proof requires that we start with Theorem~\ref{t:manyballs}(3). 
We need the following result. 

\begin{thm}[Carmichael's theorem~\cite{Ca13}]\label{t:carmichael}
Let $n$ be a positive integer, $n \notin \{1,2,6,12\}$. Then $F_n$ admits a primitive factor, 
i.e. a prime $p$ such that $n=\min \{k \in \N^+ \; | \; p \text{ divides } F_k \}$. 
\end{thm}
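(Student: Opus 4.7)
The plan is to prove Carmichael's theorem by combining the divisibility properties of the Fibonacci sequence with a growth estimate, following the classical cyclotomic approach.

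First, for each prime $p$ I would define the \emph{rank of apparition} $\alpha(p)$ as the smallest positive integer $k$ with $p \mid F_k$. Identity (I7) immediately gives the key divisibility criterion: $p \mid F_n$ if and only if $\alpha(p) \mid n$. Consequently, a prime $p$ is a primitive divisor of $F_n$ precisely when $\alpha(p) = n$, and the theorem amounts to showing that for $n \notin \{1,2,6,12\}$ at least one prime dividing $F_n$ has $\alpha(p) = n$.

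The second step is a lifting-the-exponent lemma: for every prime $p$ and every positive multiple $n$ of $\alpha(p)$,
\begin{equation*}
v_p(F_n) = v_p(F_{\alpha(p)}) + v_p(n/\alpha(p)),
\end{equation*}
where $v_p$ denotes the $p$-adic valuation (with small explicit corrections for $p = 2$, which is the only delicate case). This can be derived inductively from identity (I9), specialized to $r = \alpha(p)$, by taking $p$-adic valuations on both sides of $F_n^2 = F_{n+r} F_{n-r} + (-1)^{n-r} F_r^2$. A useful reformulation is to introduce ``primitive-part'' factors $\Phi_n$ defined by $F_n = \prod_{d \mid n} \Phi_d$; then the lifting-the-exponent lemma implies that every prime divisor of $\Phi_n$ is either a primitive divisor of $F_n$ or divides $n$.

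The main obstacle is turning this into a quantitative statement forcing $\Phi_n$ to contribute a genuinely new prime. I would use Binet's formula $F_n = (\varphi^n - (-\varphi)^{-n})/\sqrt 5$, with $\varphi = (1+\sqrt 5)/2$, together with M\"obius inversion applied to $F_n = \prod_{d \mid n} \Phi_d$, to derive an asymptotic of the shape
\begin{equation*}
\log \Phi_n = \psi(n) \log \varphi + O(1),
\end{equation*}
where $\psi$ is Euler's totient function. Since $\psi(n) \to \infty$, this forces $\Phi_n > n$ for $n$ greater than an explicit threshold, so $\Phi_n$ must contain a prime factor not dividing $n$, which by the previous step is a primitive divisor of $F_n$. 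The finitely many values of $n$ below the threshold are checked by direct computation, and one verifies that $n \in \{1,2,6,12\}$ are exactly the cases where every prime factor of $F_n$ already divides some $F_d$ with $d < n$ (for instance $F_6 = 8$ with $2 \mid F_3$, and $F_{12} = 144$ with $2 \mid F_3$ and $3 \mid F_4$). The most delicate point is tracking the implicit constants and the $p = 2$ correction carefully enough to make the threshold low enough for an exhaustive check to complete the argument.
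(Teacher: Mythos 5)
This statement is not proved in the paper at all: it is Carmichael's 1913 theorem, imported verbatim with a citation to~\cite{Ca13} and used as a black box in Section~5 (via Remark~\ref{r:carm-cor}). So there is no in-paper argument to compare yours against; what you have written is an outline of the classical Birkhoff--Vandiver/Carmichael cyclotomic proof, and as an outline it follows the right strategy: rank of apparition and the criterion $p\mid F_n\iff\alpha(p)\mid n$, the lifting-the-exponent lemma, the primitive part $\Phi_n=\prod_{d\mid n}F_d^{\mu(n/d)}$, the growth estimate $\log\Phi_n=\phi(n)\log\varphi+O(1)$ from Binet's formula, and a finite check isolating $n\in\{1,2,6,12\}$.

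One step in your deduction is too quick, and it is the step that carries the whole weight of the argument. From ``every prime divisor of $\Phi_n$ is either primitive or divides $n$'' together with $\Phi_n>n$ you cannot yet conclude that $\Phi_n$ has a primitive prime divisor: a priori $\Phi_n$ could equal a high power of a single non-primitive prime dividing $n$. What the classical proof actually establishes, using the lifting-the-exponent lemma, is the sharper statement that at most \emph{one} prime $p$ can divide $\Phi_n$ without being primitive (namely the largest prime factor of $n$), and that it does so to the first power only (for $n$ outside a few small exceptional cases). Hence the non-primitive part of $\Phi_n$ is bounded by $n$ itself, and $\Phi_n>n$ then genuinely forces a primitive divisor. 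You should state and prove that multiplicity bound explicitly; it follows from your valuation formula $v_p(F_n)=v_p(F_{\alpha(p)})+v_p(n/\alpha(p))$ by comparing $v_p(\Phi_n)$ with $v_p(F_n)-v_p(F_{n/p})$, but it is not a formal consequence of the weaker statement you recorded. With that repair, and with the usual care at $p=2$ and at $n=6,12$ where the estimate fails, the sketch becomes the standard complete proof. A minor point of hygiene: you use $\psi$ for Euler's totient, which collides with the $\psi$ of Lemma~\ref{l:precise}; within this paper write $\varphi$ or $\phi$ for the totient as in the proof of that lemma.
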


\begin{rmk}\label{r:carm-cor}
An immediate consequence of Carmichael's theorem is that if $n$ is odd and $n>3$, then $F_n$ 
always admits an {\em odd} primitive factor $p$. Moreover, by (I7), for any integer $k$ we have 
\[
p\ |\ F_k\quad \Longleftrightarrow\quad p\ |\ \gcd(F_n,F_k) = F_{\gcd(n,k)} 
\quad\Longleftrightarrow\quad n\ |\ k.
\]
\end{rmk}

%\color{blue} 
The following statement implies Theorem~\ref{t:manyballs}(3). 
We shall prove it after a few preliminaries. 

\begin{lemma}\label{l:precise}
Given an odd integer $a>1$, let $S(a)\subset\F_3$ be the subset of elements of the form $B_{F_a,q}$, and let $\psi\co\N_{>1}\to\N$ be the map given by $\psi(a) = \left|S(a)\right|$. 
Then, for each odd prime $a>3$ we have $\psi(a)= (a-1)/2$. In particular, $\psi$ is unbounded. Moreover, for each odd 
integer $a>3$ the map $f\co S(a)\to\Z/F_a\Z$ 
given by $f(B_{F_a,q}) = q^2\bmod F_a$ is injective. %\AP{[Meglio specificare che consideriamo $B_{F_a,q}$ e $B_{F_a,F_a-q}$ lo stesso elemento di $S(a)?$]}\PL{[Ho riformulato sopra. Gli elementi delle famiglie $\F_i$ sono implicitamente definiti a meno di diffeomorfismi, quindi questa riformulazione dovrebbe bastare]} 
\end{lemma}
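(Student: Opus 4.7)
The plan is to express $q^2 \bmod F_a$ explicitly in terms of $b$, and then exploit standard Fibonacci identities together with Carmichael's theorem on primitive prime divisors.

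First I would simplify the formula for $q$. By Theorem~\ref{t:disj-emb}(3), $B_{F_a,q} \in \F_3$ means $q \equiv \pm L_b/F_{b-\ep a} \bmod F_a$ for some $\ep \in \{\pm 1\}$ and some odd $b$ coprime to $a$ (the condition $b \neq \ep a$ is automatic from coprimality with $a$). Using (I8) together with the parity of $a$, one checks $F_{b-\ep a} \equiv -\ep F_b F_{a-1} \bmod F_a$, so that $F_{b-\ep a}^2 \equiv F_b^2 F_{a-1}^2 \bmod F_a$ is independent of $\ep$. Combining with (I6) in the form $L_b^2 = 5 F_b^2 - 4$ (valid for $b$ odd) yields
\[
q^2 \equiv \frac{5 F_b^2 - 4}{F_b^2 \, F_{a-1}^2} \pmod{F_a},
\]
an injective affine function of $F_b^{-2}$ (invertibility of $F_b$ and $F_{a-1}$ mod $F_a$ follows from (I7)). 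Thus both the computation of $\psi(a)$ and the injectivity of $f$ reduce to understanding when $F_{b_1}^2 \equiv F_{b_2}^2 \bmod F_a$.

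Next, identity (I9) with both indices odd gives $F_{b_1}^2 - F_{b_2}^2 = F_{b_1-b_2}\, F_{b_1+b_2}$, converting equality of squares into a divisibility condition on a product. For any odd $a > 3$, Carmichael's theorem (Remark~\ref{r:carm-cor}) furnishes an odd prime $p \mid F_a$ with $p \mid F_k$ iff $a \mid k$. Because $p$ is prime, it splits the product and yields $F_a \mid F_{b_1-b_2}F_{b_1+b_2} \Longrightarrow a \mid b_1-b_2$ or $a \mid b_1+b_2$; (I7) provides the converse. Therefore $F_{b_1}^2 \equiv F_{b_2}^2 \bmod F_a$ if and only if $b_1 \equiv \pm b_2 \bmod a$. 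For $a$ an odd prime $>3$, the classes $\{r,-r\} \bmod a$ with $\gcd(r,a)=1$ form exactly $(a-1)/2$ pairs, each realizable by an odd representative (shifting by $a$ if needed), giving $\psi(a) = (a-1)/2$ and hence the unboundedness of $\psi$.

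For the injectivity of $f$ (valid for any odd $a > 3$), $q_1^2 \equiv q_2^2 \bmod F_a$ yields $F_{b_1}^2 \equiv F_{b_2}^2 \bmod F_a$ by the first paragraph, hence $b_1 \equiv \pm b_2 \bmod a$; since $b_1, b_2, a$ are all odd, $b_1 \mp b_2$ is automatically even, improving this to $b_1 \equiv \pm b_2 \bmod 2a$. The periodicities $F_{b+2a} \equiv -F_b$ and $L_{b+2a} \equiv -L_b \bmod F_a$ (both consequences of $\phi^{2a} \equiv -1$ in $\Z[\phi]/F_a$), together with $F_{-b} = F_b$ and $L_{-b} = -L_b$ for $b$ odd, then force $L_b/F_{b-\ep a}$ to coincide at the two inputs up to an overall sign, whence $q_1 \equiv \pm q_2 \bmod F_a$ and $B_{F_a,q_1} = B_{F_a,q_2}$. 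The main obstacle is the Carmichael step: because $F_a$ is in general composite, $F_a \mid F_{b_1-b_2}F_{b_1+b_2}$ does not split on the nose, and the primitive odd prime of $F_a$ is precisely what restores the splitting.
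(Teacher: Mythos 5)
Your proposal follows essentially the same route as the paper: both arguments reduce the comparison of two embeddings to the congruence $b_1\equiv\pm b_2\bmod a$ via the identities (I2)--(I9), and both invoke Carmichael's theorem through the odd primitive prime factor of $F_a$ at exactly the decisive step; your packaging of $q^2$ as an explicit function of $F_b^{-2}$ is just a reorganization of the paper's computation with $I(b,\ep)=L_b/F_{b-\ep a}$ and its square.

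One point needs repair. You call $q^2\equiv (5F_b^2-4)/(F_b^2F_{a-1}^2)$ an \emph{injective} affine function of $F_b^{-2}$, and later deduce ``$q_1^2\equiv q_2^2\bmod F_a$ yields $F_{b_1}^2\equiv F_{b_2}^2\bmod F_a$.'' The coefficient of $F_b^{-2}$ is $-4F_{a-1}^{-2}$, and $4$ is a unit mod $F_a$ only when $F_a$ is odd, i.e.\ when $3\nmid a$. For $3\mid a$ (e.g.\ $a=9$, $F_9=34$) the map is not injective on $\Z/F_a\Z$ and the displayed implication is false as stated; what you actually get from $q_1^2\equiv q_2^2$ is $4F_{b_1-b_2}F_{b_1+b_2}\equiv 0\bmod F_a$. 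This does not affect your conclusion, because the primitive prime $p$ of $F_a$ is odd, hence still divides $F_{b_1-b_2}F_{b_1+b_2}$ and yields $b_1\equiv\pm b_2\bmod a$ — this is precisely how the paper phrases the step, keeping the factor $4$ visible until the odd prime absorbs it. For the count $\psi(a)=(a-1)/2$ the issue never arises, since there $a$ is prime and $F_a$ is odd. The rest of your argument (the $\ep$-independence $I(b,1)\equiv -I(b,-1)$, the upgrade from $b_1\equiv\pm b_2\bmod a$ to $\bmod\,2a$ using parity, and the sign-periodicity of $F$ and $L$ modulo $F_a$ with period $2a$) is correct and matches the paper's computations.
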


Let $a>1$ be an odd integer. 
Given $b\in\Z$ odd and coprime with $a$ and $\ep\in\{\pm 1\}$, by Theorem~\ref{t:disj-emb}(3) there is a smooth, orientation-preserving embedding $B_{F_a,q}\subset\CP^2$, where $q\equiv \pm L_b/F_{b-\ep a}\bmod F_a$. 
We would like to understand how many different such embeddings we can construct if we keep $a$ fixed. 
In other words, setting $I(b,\ep):=L_b/F_{b-\ep a} \bmod F_a$, we want to understand for which pairs $(b,\ep)$ and $(b',\ep')$ we have $I(b,\ep)\equiv \pm I(b',\ep')\bmod F_a$.
We are going to do this assuming $a>3$, because the only $B_{p,q}$'s that we have when $a=1$ or $a=3$ are, respectively, $B_{1,0}=B^4$ and $B_{2,1}$, which are both already known to embed in $\CP^2$.
In principle  we should allow $b$ and $\ep$ to vary independently of each other, but we observe that, since $a$ and $b$ are both odd, by 
(I2) we have 
\[
I(b,\ep) = L_b/F_{b-\ep a} = (-L_b)/(-F_{b-\ep a}) = L_{-b}/F_{-b-(-\ep)a} = I(-b,-\ep).
\]
Therefore, it will suffice to fix $\ep=-1$ and understand for which $b,b'$ we have $I(b,-1)\equiv\pm I(b',-1)\bmod F_a$.

\begin{prop}\label{p:I(b,ep)}
Let $a,b$ odd, coprime integers with $a>3$. Then, 
$I(b,-1)\equiv\pm I(b',-1)\bmod F_a$ if and only if $b \equiv \pm b'\bmod a$. 
\end{prop}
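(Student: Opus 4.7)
The plan is to express $I(b,-1)$ modulo $F_a$ in a form that isolates the dependence on $b$, reduce the condition $I(b,-1)\equiv\pm I(b',-1)\bmod F_a$ to a simple Fibonacci divisibility statement, and then apply Carmichael's theorem. I tacitly assume $b'$ also satisfies the same hypotheses as $b$, namely $b'$ is odd and coprime with $a$, so that $I(b',-1)$ is defined.

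First, I would apply identity (I8) with $m=b$ and $n=a$ to get $F_{b+a}=F_{b+1}F_a+F_b F_{a-1}\equiv F_{a-1}F_b\bmod F_a$. Since $\gcd(a,b)=\gcd(a,a-1)=1$, identity (I7) shows that $F_b$ and $F_{a-1}$ are invertible modulo $F_a$, and similarly for $F_{b'}$. Therefore
\[
I(b,-1)\equiv F_{a-1}^{-1}L_b F_b^{-1}\bmod F_a,
\]
and analogously for $b'$, so the condition $I(b,-1)\equiv\pm I(b',-1)\bmod F_a$ is equivalent to $L_bF_{b'}\equiv\pm L_{b'}F_b\bmod F_a$. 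Next I would use identity (I5), combined with the parity identity $F_{b'-b}=-F_{b-b'}$ (which follows from (I2) and the fact that $b-b'$ is even), to compute
\[
L_bF_{b'}-L_{b'}F_b=2F_{b-b'},\qquad L_bF_{b'}+L_{b'}F_b=2F_{b+b'}.
\]
Thus the condition becomes $F_a\mid 2F_{b\mp b'}$ for the appropriate sign.

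For the ``if'' direction, if $b\equiv\pm b'\bmod a$ then $a\mid b\mp b'$, so (I7) yields $F_a\mid F_{b\mp b'}$ and in particular $F_a\mid 2F_{b\mp b'}$. The converse is the substantive direction: here I would invoke Remark~\ref{r:carm-cor}, which, since $a>3$ is odd, provides an \emph{odd} prime $p$ dividing $F_a$ such that $p\mid F_k$ if and only if $a\mid k$. Then $F_a\mid 2F_{b\mp b'}$ implies $p\mid 2F_{b\mp b'}$, and since $p$ is odd, $p\mid F_{b\mp b'}$, whence $a\mid b\mp b'$, i.e.\ $b\equiv\pm b'\bmod a$.

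The main technical obstacle is that $F_a$ itself may be even (precisely when $3\mid a$), so one cannot directly cancel the factor of $2$ modulo $F_a$; the existence of an odd primitive prime divisor supplied by Carmichael's theorem is precisely what allows one to bypass this obstruction.
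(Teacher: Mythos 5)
Your proposal is correct and follows essentially the same route as the paper: both arguments reduce the congruence $I(b,-1)\equiv\pm I(b',-1)\bmod F_a$ to the condition $F_a\mid 2F_{b\mp b'}$ using (I5) and (I8) together with the invertibility of $F_{a-1}$ modulo $F_a$, and both then settle the substantive direction by invoking the odd primitive prime divisor of $F_a$ from Carmichael's theorem. The only difference is cosmetic (you apply (I8) first to replace $F_{b+a}$ by $F_{a-1}F_b$ and then use (I5), whereas the paper expands $L_bF_{b'+a}$ via (I5) first), and your closing remark about why the factor of $2$ cannot simply be cancelled is exactly the point the paper's use of an \emph{odd} primitive factor addresses.
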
 

\begin{proof} 
Using (I5) we see that the congruences $I(b,-1)\equiv\pm I(b',-1)\bmod F_a$ are equivalent to 
\[
F_{b+b'+a} - F_{b-b'-a} = L_b F_{b'+a} \equiv 
\pm L_{b'} F_{b+a} = L_{\pm b'}F_{b+a} =F_{\pm b'+b+a} - F_{\pm b'-b-a}\bmod F_a
\]
and using (I8) to 
\begin{equation}\label{e:FLcong}
\begin{split}
(F_{b+b'} - F_{\pm b'+b}) F_{a-1} 
+  (F_{\pm b'-b} - F_{b-b'}) F_{-a -1}
\equiv 
0\bmod F_a.
\end{split}
\end{equation} 
Since $a$ is odd, by (I2) we have $F_{-a-1}=-F_{a+1} \equiv -F_{a-1} \bmod F_a$,  
and since $F_{a-1}$ and $F_a$ are coprime Equation~\eqref{e:FLcong} is equivalent to 
\[
(F_{b+b'} - F_{\pm b'+b}) 
-  (F_{\pm b'-b} - F_{b-b'}) \equiv 0 \bmod F_a
\]
Thus, we have 
\begin{equation}\label{e:simpcong}
\begin{cases}
2F_{b-b'} \equiv 0 \bmod F_a & \quad \text{if and only if $I(b,-1)\equiv I(b',-1)\bmod F_a$}\\
2F_{b+b'} \equiv 0 \bmod F_a & \quad \text{if and only if $I(b,-1)\equiv -I(b',-1)\bmod F_a$.}
\end{cases}
\end{equation} 
Now observe that if $b \equiv \pm b' \bmod a$ then by (I7) 
$\gcd(F_{b\mp b'},F_a) = F_a$, therefore by Equations~\eqref{e:simpcong} 
we have $I(b,-1)\equiv \pm I(b',-1)\bmod F_a$. Conversely, by 
Remark~\ref{r:carm-cor} the number $F_a$ admits an odd primitive factor $p$. 
Equations~\eqref{e:simpcong} imply that $p$ must divide $F_{b \mp b'}$ if 
$I(b,-1)\equiv \pm I(b',-1)\bmod F_a$, and again by Remark~\ref{r:carm-cor}
this is equivalent to $a\ |\ b \mp b'$. In other words, $b \equiv \pm b' \bmod a$.
\end{proof} 

\begin{proof}[Proof of Lemma~\ref{l:precise}]  
Given any odd integer $b$ coprime with $a$ 
we have $I(b,-1)=I(b_0,-1)$ for some odd $b_0$ such that $b_0 \equiv b \bmod 2a$ 
and $0<b_0<2a$. On the other hand, given two odd integers $b$ and $b'$, both coprime 
with $a$ and both lying in the interval $(0,2a)$, we have
\[ 
b \equiv b' \bmod a\ \Longleftrightarrow\  b=b' 
\qquad \text{and} \qquad 
b \equiv -b' \bmod a\ \Longleftrightarrow\ b+b'=2a.
\]
Since an odd integer is coprime with $a$ if and only if it is coprime with $2a$, the 
interval $(0,2a)$ contains exactly $\varphi(2a)$ odd integers coprime with $a$, 
where $\varphi$ denotes Euler's totient function, and since the involution 
$x \mapsto 2a-x$ has no fixed points among such integers, we 
get $\psi(a)=\dfrac{\varphi(2a)}{2}=\dfrac{\varphi(a)}{2}$, 
which takes the value $(a-1)/2$ when $a$ is prime. In particular, $\psi$ is unbounded. 
This establishes the first part of the statement. 

Now observe that 
$I(b,-1)^2=L_b^2/F_{b+a}^2 = L_{b'}^2/F_{b'+a}^2=I(b',-1)^2\bmod F_a$ if and only if 
\begin{equation}\label{e:qsquare}
L_{b'}^2 F_{b+a}^2 - L_b^2 F_{b'+a}^2  = 
(L_{b'} F_{b+a} + L_b F_{b'+a})(L_{b'} F_{b+a} - L_b F_{b'+a})\equiv 0\bmod F_a.
\end{equation} 
Moreover, by (I5) 
\[
L_{b'} F_{b+a} - L_b F_{b'+a} =  
F_{b-b'-a} - F_{b'-b-a} = L_{-a} F_{b-b'} 
\equiv 2 F_{-a - 1} F_{b-b'} 
\]
and by (I2), (I5) and (I8) 
\[
\begin{split}
L_{b'} F_{b+a} + L_b F_{b'+a} = 
2F_{b'+b+a} - F_{b'-b-a} - F_{b-b'-a} \equiv    
2F_{-a-b-b'} - F_{-a + 1} (F_{b'-b}+F_{b-b'})\equiv \\
\equiv 2 F_{-a + 1} F_{-b-b'} \bmod F_a. 
\end{split}
\]
Therefore Equation~\eqref{e:qsquare} is equivalent to 
\[
4 F_{b-b'} F_{b+b'} \equiv 0\bmod F_a.
\]
Let $p$ be a primitive factor of $F_a$. As observed in Remark~\ref{r:carm-cor} $p$ is necessarily odd. 
Then, $p$ divides at least one among $F_{b-b'}$ and $F_{b+b'}$, 
and by Remark~\ref{r:carm-cor} we conclude that $b \equiv \pm b' \bmod a$ 
and therefore by Proposition~\ref{p:I(b,ep)} $I(b,-1)=\pm I(b',-1)\bmod F_a$. 
This concludes the proof of the lemma.
\end{proof} 

By Lemma~\ref{l:precise} the sequence $\{\psi(a)\}_a$ is unbounded. 
Since, using the notation of Theorem~\ref{t:manyballs}, we have $|S_{F_a}|\geq\psi(a)$, this implies Theorem~\ref{t:manyballs}(3). 
We are now going to prove Theorem~\ref{t:manyballs}(1) and (2). 
Suppose $B_{p,q}\in\F_1$. Without loss of generality we may assume $B_{p,q}=B_{p_1,q_1}$, with $(p_1,p_2,p_3)$ satisfying Markov's equation $p_1^2+p_2^2+p_3^2 = 3 p_1 p_2 p_3$. 
This implies $p_2^2+p_3^2\equiv 0\bmod p_1$, therefore $q^2 = q_1^2 \equiv 9p_2^2/p_3^2 \equiv -9\bmod p_1$. 
Similarly, if $B_{p,q}\in\F_2$ then $B_{p,q}=B_{x_1,q_1}$ and using the equation $x_1^2+ x_1 x_2 x_3 = x_2^2+x_3^2$ and the congruence $x_2 q_1 \equiv\pm x_3\bmod x_1$ one concludes $q^2\equiv -1\bmod p$. 
This proves one direction of both Theorem~\ref{t:manyballs}(1) and (2). 

To prove the other direction of Theorem~\ref{t:manyballs}(1) it suffices to show that any rational ball $B_{p,q}$ appearing in the statement of Theorem~\ref{t:disj-emb} and satisfying $q^2\equiv -9\bmod p$ actually belongs to $\F_1$. 
If $B_{p,q}\in\F_2$, $q^2\equiv -1\bmod p$ implies $p\ |\ 8$ and we are forced to have $p \in \{1,2\}$ because $q^2\equiv -1\bmod p$ is impossible if $4\ |\ p$. 
Therefore $B_{p,q} \in \{ B_{1,0},B_{2,1} \}$. But $B_{1,0}$ and $B_{2,1}$ belong to $\F_1$ because they both arise from the Markov triple $(2,1,1)$. 
Now suppose that $B_{F_a,q}\in\F_3$. When $a = 3$ the rational ball $B_{F_a,q}$ is equal to $B_{2,1}$ and 
therefore belongs $\F_1$. Thus, we may assume $a>3$. We claim 
that $q^2\equiv -9\bmod F_a$ implies $B_{F_a,q} = B_{F_a,F_{a-4}}$. 
Indeed, observe that, choosing $b=2-a$ and $\ep=-1$ we have 
$q\equiv\pm L_{2-a}/F_2\equiv\pm L_{2-a}\bmod F_a$. Moreover, 
\[
L_{2-a} = -L_{a-2} = - (F_{a-3} + F_{a-1}) = - (F_a - F_{a-4}) \equiv F_{a-4}\bmod F_a. 
\]
Therefore, by (I9) we have $q^2\equiv F_{a-4}^2 \equiv -F_4^2 \equiv -9 \bmod F_a$. 
In view of the last statement of Lemma~\ref{l:precise}, this proves the claim. Since 
$(1,F_{a-2},F_a)$ is a Markov triple and $F_{a-4}\equiv 3F_{a - 2}\bmod F_a$, 
we have $B_{F_a,F_{a-4}}\in\F_1$, and the other direction of Theorem~\ref{t:manyballs}(1) holds. 

To prove the other direction of Theorem~\ref{t:manyballs}(2), as in the previous case it suffices to show that 
any rational ball $B_{p,q}$ appearing in the statement of Theorem~\ref{t:disj-emb} and 
satisfying $q^2\equiv -1\bmod p$, actually belongs to $\F_2$. 
As before, If $B_{p,q}\in\F_1$ then $B_{p,q} \in \{B_{1,0},B_{2,1}\}$, and both $B_{1,0}$ and $B_{2,1}$ belong to $\F_2$ 
because they arise, respectively, from the triples $(x_1,x_2,x_3) = (1,1,0)$ and $(x_1,x_2,x_3) = (2,1,3)$. If $B_{F_a,q}\in\F_3$, 
as before we may assume $a>3$. 
We claim that $q^2\equiv -1\bmod F_a$ implies $B_{F_a,q} = B_{F_a,F_{a-2}}$. 
This time we choose $b=1$ and $\ep=-1$. 
Then, by (I9) $q\equiv\pm L_1/F_{1+a}\equiv\mp F_{a-1}\equiv\pm F_{a-2} \bmod F_a$ and 
\[
F_{a-2}^2 = F_a F_{a-4} - F_2^2 \equiv -1\bmod F_a. 
\]
Therefore $q^2\equiv -1\bmod F_a$ implies $q^2 \equiv F_{a-2}^2\bmod F_a$. 
As before, in view of Lemma~\ref{l:precise} this proves the claim. 
Since by (I9) the triple $(x_1,x_2,x_3) = (F_a,F_{a+1},1)$ solves the 
equation $x_1^2+ x_1 x_2 x_3 = x_2^2+x_3^2$, 
\[
F_{a+1} F_{a-1} \equiv -1\bmod F_a\quad\text{and}\quad 
F_{a-2}\equiv -F_{a-1}\bmod F_a,
\]
we have $B_{F_a,F_{a-2}}\in\F_2$ and the 
other direction of Theorem~\ref{t:manyballs}(2) holds. This concludes the proof of Theorem~\ref{t:manyballs}.

\printbibliography
\Addresses
\end{document}